\documentclass[12pt]{amsart}
\usepackage{epsfig}
\usepackage{graphicx}
\usepackage{amscd}
\usepackage{amsmath}
\usepackage{amsxtra}
\usepackage{amsfonts}
\usepackage{amssymb}

\oddsidemargin  0.0in
	\evensidemargin 0.0in
	\textwidth      6.5in
	\headheight     0.0in
	\topmargin      0.0in

\newtheorem{theorem}{Theorem}[section]
\newtheorem{corollary}[theorem]{Corollary}
\newtheorem{lemma}[theorem]{Lemma}
\newtheorem{proposition}[theorem]{Proposition}

\newtheorem{conjecture}[theorem]{Conjecture}
\theoremstyle{definition}
\newtheorem{definition}[theorem]{Definition}
\newtheorem{remark}[theorem]{Remark}

\newtheorem{example}[theorem]{Example}
\theoremstyle{remark}

\renewcommand{\theclaim}{\textup{\theclaim}}

\numberwithin{equation}{section}

\def\openone

{\mathchoice

{\hbox{\upshape \small1\kern-3.3pt\normalsize1}}

{\hbox{\upshape \small1\kern-3.3pt\normalsize1}}

{\hbox{\upshape \tiny1\kern-2.3pt\SMALL1}}

{\hbox{\upshape \Tiny1\kern-2pt\tiny1}}}

\makeatletter

\newbox\ipbox

\newcommand{\diracb}[1]{\left\langle #1\mathrel{\mathchoice

{\setbox\ipbox=\hbox{$\displaystyle \left\langle\mathstrut
#1\right.$}

\vrule height\ht\ipbox width0.25pt depth\dp\ipbox}

{\setbox\ipbox=\hbox{$\textstyle \left\langle\mathstrut
#1\right.$}

\vrule height\ht\ipbox width0.25pt depth\dp\ipbox}

{\setbox\ipbox=\hbox{$\scriptstyle \left\langle\mathstrut
#1\right.$}

\vrule height\ht\ipbox width0.25pt depth\dp\ipbox}

{\setbox\ipbox=\hbox{$\scriptscriptstyle \left\langle\mathstrut
#1\right.$}

\vrule height\ht\ipbox width0.25pt depth\dp\ipbox}

}\right. }

\newcommand{\dirack}[1]{\left. \mathrel{\mathchoice

{\setbox\ipbox=\hbox{$\displaystyle \left.\mathstrut
#1\right\rangle$}

\vrule height\ht\ipbox width0.25pt depth\dp\ipbox}

{\setbox\ipbox=\hbox{$\textstyle \left.\mathstrut
#1\right\rangle$}

\vrule height\ht\ipbox width0.25pt depth\dp\ipbox}

{\setbox\ipbox=\hbox{$\scriptstyle \left.\mathstrut
#1\right\rangle$}

\vrule height\ht\ipbox width0.25pt depth\dp\ipbox}

{\setbox\ipbox=\hbox{$\scriptscriptstyle \left.\mathstrut
#1\right\rangle$}

\vrule height\ht\ipbox width0.25pt depth\dp\ipbox}

} #1\right\rangle}

\newcommand{\bz}{\mathbb{Z}}

\newcommand{\B}{\mathcal{B}}
\newcommand{\esssup}{\operatorname*{esssup}}
\newcommand{\essinf}{\operatorname*{essinf}}
\newcommand{\br}{\mathbb{R}}
\newcommand{\bc}{\mathbb{C}}

\newcommand{\bn}{\mathbb{N}}

\def\blfootnote{\xdef\@thefnmark{}\@footnotetext}


\hyphenation{wave-lets}\hyphenation{ in-fi-nite}\hyphenation{ con-vo-lu-tion}

\input xy
\xyoption{all}
\usepackage{amssymb}





\def\H{\mathcal{H}}

\def\-{^{-1}}
\def\B{\mathcal{B}}
\def\D{\mathcal{D}}






\begin{document}

\title[Uniformity of measures with Fourier frames]{Uniformity of measures with Fourier frames}
\author{Dorin Ervin Dutkay}
\blfootnote{}
\address{[Dorin Ervin Dutkay] University of Central Florida\\
	Department of Mathematics\\
	4000 Central Florida Blvd.\\
	P.O. Box 161364\\
	Orlando, FL 32816-1364\\
U.S.A.\\} \email{Dorin.Dutkay@ucf.edu}

\author{Chun-Kit Lai}
\address{Department of Mathematics, The Chinese University of Hong Kong,  Hong Kong}
\email{cklai@math.cuhk.edu.hk}

\thanks{}
\subjclass[2000]{28A80,28A78, 42B05}
\keywords{Affine iterated function systems, Frame measures, Gabor orthonormal bases, Hausdorff measures, spectral measures, tight frames.}

\begin{abstract}
We examine Fourier frames and, more generally, frame measures for different probability measures. We prove that if a measure has an associated frame measure, then it must have a certain uniformity in the sense that the weight is distributed quite uniformly on its support. To be more precise, by considering certain absolute continuity properties of the measure and its translation, we recover the characterization on absolutely continuous measures $g\, dx$ with Fourier frames obtained in \cite{Lai11}. Moreover, we prove that the frame bounds are pushed away by the essential infimum and supremum of the function $g$. This also shows that absolutely continuous spectral measures supported on a set $\Omega$, if they exist, must be the standard Lebesgue measure on $\Omega$ up to a multiplicative constant. We then investigate affine iterated function systems (IFSs), we show that if an IFS with no overlap admits a frame measure then the probability weights are all equal. Moreover, we also show that the {\L}aba-Wang conjecture \cite{MR1929508} is true if the self-similar measure is absolutely continuous. Finally, we will present a new approach to the conjecture of Liu and Wang \cite{LW} about the structure of non-uniform Gabor orthonormal bases of the form ${\mathcal G}(g,\Lambda,{\mathcal J})$.
\end{abstract}
\maketitle \tableofcontents

\section{Introduction}
Everyone knows about Fourier series: the exponential functions $\{e^{2\pi i nx} : n\in\bz\}$ form an orthonormal basis for $L^2[0,1]$. Perturbations of the set $\bz$ will produce frames for $L^2[0,1]$, or ``non-harmonic'' Fourier series, see e.g., \cite{DS52a, OSANN}. This idea was later extended to orthonormal bases or frames of exponentials (Fourier frames) for fractal measures \cite{DHS09,HL08,
JP98,DHS09,HL08,MR1744572,MR2338387,MR2200934,MR2297038,MR1785282,MR2279556,MR2443273,DHSW10,DHW11a,HLL11}.

In \cite{DHW11b} the notion of frames of exponentials for an arbitrary measure was extended to that of a frame measure.

\begin{definition}\label{def0.1}
Let $\mu$ be a finite, compactly supported Borel measure on
$\br^d$. The {\it Fourier transform} of a function $f\in L^1(\mu)$
is defined by
$$\widehat{f\,d\mu}(t)=\int f(t)e^{-2\pi i t\cdot x}\,d\mu(x),\quad(t\in\br^d).$$

Denote by $e_t$, $t\in\br^d$, the exponential function
$$e_t(x)=e^{2\pi i t\cdot x},\quad(x\in\br^d).$$

 We say that a Borel measure $\nu$ is a {\it Bessel measure} for $\mu$ if there exists a constant $B>0$ such that for every $f \in L^2(\mu)$, we have
\[ \|\widehat{f\,d\mu} \|_{L^2(\nu)}^{2}\leq B \| f\|_{L^2(\mu)}^{2} . \]
We call $B$ a {\it (Bessel) bound} for $\nu$. We say the measure
$\nu$ is a {\it frame measure} for $\mu$ if there exists constants
$A,B > 0$ such that for every $f \in L^2(\mu)$, we have
\[ A\| f\|_{L^2(\mu)}^{2}\leq \|\widehat{f\,d\mu} \|_{L^2(\nu)}^{2} \leq B \| f\|_{L^2(\mu)}^{2}. \]
We call $A,B$ {\it (frame) bounds} for $\nu$. We call $\nu$ a {\it tight frame measure} if $A=B$ and {\it
Plancherel measure} if $A=B=1$.
\end{definition}

 Using the above definitions, we see that a set $E(\Lambda):=\{e_\lambda : \lambda\in\Lambda\}$ is a Fourier frame for $L^2(\mu)$ if and only if the measure $\nu=\sum_{\lambda\in\Lambda}\delta_\lambda$ is a frame measure for $\mu$. $\{e_\lambda : \lambda\in\Lambda\}$ is a tight frame if and only if the measure $\nu=\sum_{\lambda\in\Lambda}\delta_\lambda$ is a tight frame measure for $\mu$. When $E(\Lambda)$ is an orthonormal bases, $\mu$ is called a {\it spectral measure} and $\Lambda$ is called a {\it spectrum} of $\mu$ (\cite{JP98,MR1929508}).

\medskip

In \cite{Lai11}, Lai proved that for absolutely continuous measures $d\mu=g(x)\,dx$, if there exists a Fourier frame, then the function $g$ must be bounded above and below on its support. The proof is based on comparing the Beurling densities. In this paper, we give another approach to prove the theorem.  The main idea is to consider the translates of the original measure $\mu$ restricted to some subset $F$ with $\mu(F)>0$. We denote here by $\omega$ the measure $\omega (\cdot) = T_a\mu|_{F+a}(\cdot) =\mu ((\cdot+a)\cap(F+a))$ with $a\in{\Bbb R}^d$ (see section 2 for details). We have the following theorem.

  \begin{theorem}\label{th0.1+}
Let $\mu$ be a finite Borel measure on $\br^d$ and suppose there exists a frame measure for $\mu$, with frame bounds $A,B>0$. Assume $\omega\ll\mu$. Then
$$
\frac{B}{A}\geq\left\|\frac{d\omega}{d\mu}\right\|_\infty.
$$
  \end{theorem}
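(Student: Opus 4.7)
The plan is to exploit the translation structure in the definition of $\omega$ to transfer the frame inequality from $\mu$ to $\omega$ with the \emph{same} constants $A$ and $B$, and then to couple the two frame inequalities through the Radon--Nikodym derivative $h:=d\omega/d\mu$ to squeeze $h$ between $0$ and $B/A$ pointwise $\mu$-a.e.

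For the transfer step, given $g\in L^2(\omega)$ (extended by $0$ outside the support of $\omega$, which lies in $F$), define
$$
f(x) \;:=\; g(x-a)\,\chi_{F+a}(x).
$$
The change of variables $y=x-a$ sends $d\mu$ to the translate $\widetilde\mu$ of $\mu$ by $-a$, and the definition $\omega(E)=\mu\bigl((E+a)\cap(F+a)\bigr)$ gives $\chi_F\,d\widetilde\mu=d\omega$. A direct computation then yields
$$
\|f\|_{L^2(\mu)}^{2} \;=\; \int |g|^{2}\,d\omega,
\qquad
\widehat{f\,d\mu}(t) \;=\; e^{-2\pi i\, t\cdot a}\,\widehat{g\,d\omega}(t),
$$
so in particular $|\widehat{f\,d\mu}|^{2}=|\widehat{g\,d\omega}|^{2}$. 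Inserting these identities into the frame inequality for $\mu$ applied to $f$ produces exactly the frame inequality for $\omega$ applied to $g$, so $\nu$ is also a frame measure for $\omega$ with the same bounds $A$ and $B$.

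For the coupling step, let $h=d\omega/d\mu$; this lies in $L^{1}(\mu)$ and hence is finite $\mu$-a.e.\ since $\omega$ is finite. Fix a measurable set $E$ and let $E_{n}:=E\cap\{h\le n\}$. The function $\phi:=h\chi_{E_{n}}$ is bounded, hence in $L^{2}(\mu)$, while $g:=\chi_{E_{n}}\in L^{2}(\omega)$, and $\phi\,d\mu=g\,d\omega$, so $\widehat{\phi\,d\mu}=\widehat{g\,d\omega}$. Applying the frame lower bound for $\mu$ to $\phi$ and the frame upper bound for $\omega$ to $g$ gives
$$
A\int_{E_{n}} h^{2}\,d\mu \;\le\; \int \bigl|\widehat{g\,d\omega}\bigr|^{2}\,d\nu \;\le\; B\int_{E_{n}} h\,d\mu.
$$
Monotone convergence in $n$ (the right-hand side is bounded by $B\omega(E)<\infty$) yields $A\int_{E} h^{2}\,d\mu\le B\int_{E} h\,d\mu$ for \emph{every} measurable $E$. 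Taking $E=\{h>B/A\}$ makes the integrand $Ah^{2}-Bh$ strictly positive on $E$, which forces $\mu(E)=0$. Hence $h\le B/A$ $\mu$-a.e., which is exactly $\|d\omega/d\mu\|_{\infty}\le B/A$.

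The main point to get right is the first step: arranging that $\nu$ is a frame measure for $\omega$ with the same constants. The whole argument hinges on the phase factor $e^{-2\pi i\, t\cdot a}$ dropping out when one passes to moduli, so that the frame bounds for $\mu$ pull through to $\omega$ unchanged. Once this symmetry is in place, the coupling via the Radon--Nikodym derivative together with the truncation and monotone-convergence argument is routine.
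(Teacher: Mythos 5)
Your proof is correct, but it takes a genuinely different route from the paper's. Both arguments hinge on the same phase-factor identity $|(f(\cdot-a)\,d\mu)^{\widehat{}}\,|=|\widehat{fh\,d\mu}|$ coming from translation by $a$, but they deploy it differently. The paper works with a single normalized test function: it first reduces to the case $M:=\|h\|_\infty<\infty$ by restricting to a subset of $F$, picks a set $E\subset F$ on which $M-\epsilon\le h\le M$, takes $f_1=\mu(E)^{-1/2}\chi_E$, estimates $\|Mf_1-f_1h\|_{L^2(\mu)}\le\epsilon$, and then compares the ratio $\|\widehat{Mf_1\,d\mu}\|^2_{L^2(\nu)}/\|(f_1(\cdot-a)\,d\mu)^{\widehat{}}\,\|^2_{L^2(\nu)}$ from above (at most $1+C\epsilon/(A(M-\epsilon))$) and from below (at least $AM^2/(B(M-\epsilon))$), letting $\epsilon\to0$. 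You instead prove the clean intermediate statement that $\nu$ is a frame measure for $\omega$ with the \emph{same} bounds $A,B$, and then use the measure identity $h\chi_{E_n}\,d\mu=\chi_{E_n}\,d\omega$ to chain the lower frame bound for $\mu$ (applied to $\phi=h\chi_{E_n}$, which is in $L^2(\mu)$ since it is bounded and $\mu$ is finite) with the upper frame bound for $\omega$ (applied to $\chi_{E_n}$), obtaining $A\int_E h^2\,d\mu\le B\int_E h\,d\mu$ for every Borel set $E$ after monotone convergence; taking $E=\{h>B/A\}$ forces $\mu(E)=0$. Your version buys several things: it avoids the $\epsilon$-approximation and the preliminary finiteness reduction entirely (the truncation $E_n=E\cap\{h\le n\}$ handles an unbounded $h$ automatically), it yields the stronger averaged inequality $A\int_E h^2\,d\mu\le B\int_E h\,d\mu$ on every set, and the transfer lemma that frame bounds pass unchanged from $\mu$ to $\omega$ is of independent interest. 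The paper's version is a more self-contained single computation. All the steps you need --- $\|f\|^2_{L^2(\mu)}=\|g\|^2_{L^2(\omega)}$, $\widehat{f\,d\mu}(t)=e^{-2\pi i t\cdot a}\,\widehat{g\,d\omega}(t)$, and the domination of the right-hand side by $B\omega(E)<\infty$ in the limit --- check out.
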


  This result shows that the frame bounds control the change of the measure along translations. It will be the key step in this paper and it will work also for other general measures which satisfy this translational absolute continuity assumption, not just the Lebesgue measure. First, we will extend the result in \cite{Lai11} by showing that the essential supremum and infimum of the function $g$ will push away the frame bounds of any frame measure for $d\mu=g\,dx$. In particular, if $g$ is not bounded below or above on its support, then no such frame measure can exist.

 \begin{theorem}\label{th0.1}
Let $d\mu=g\,dx$ be an absolutely continuous measure on $\br^d$. If $\nu$ is a frame measure for $\mu$ with frame bounds $A,B>0$ then
$$\frac BA\geq \frac{\esssup_\mu(g)}{\essinf_\mu(g)}.$$
\end{theorem}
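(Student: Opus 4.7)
The plan is to invoke Theorem \ref{th0.1+} with a carefully chosen translation $a$ and restriction set $F$, so that the Radon--Nikodym derivative $d\omega/d\mu$ nearly saturates the ratio $\alpha/\beta$, where $\alpha:=\esssup_\mu(g)$ and $\beta:=\essinf_\mu(g)$. I work first under the hypothesis $0<\beta\le\alpha<\infty$; the degenerate cases will follow by the same construction. For a small parameter $\epsilon>0$, set
$$E_\alpha:=\{x\in\br^d:g(x)>\alpha-\epsilon\},\qquad E_\beta:=\{x\in\br^d:0<g(x)<\beta+\epsilon\}.$$
Because $\mu$-null sets coincide with Lebesgue-null subsets of $\{g>0\}$, the definitions of $\alpha$ and $\beta$ force both $E_\alpha$ and $E_\beta$ to have strictly positive Lebesgue measure.

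Next I apply a Steinhaus-type observation. By Fubini,
$$\int_{\br^d}|E_\beta\cap(E_\alpha-a)|\,da=|E_\alpha|\cdot|E_\beta|>0,$$
so there exists $a\in\br^d$ for which $F:=E_\beta\cap(E_\alpha-a)$ has positive Lebesgue measure. By construction $g>0$ on $F$ (hence $\mu(F)>0$) and $g(x+a)>\alpha-\epsilon$ for every $x\in F$. A direct change of variables gives $d\omega(x)=g(x+a)\mathbf{1}_F(x)\,dx$, and since $g>0$ on $F$ every $\mu$-null set meets $F$ in a Lebesgue-null set, so $\omega\ll\mu$ with
$$\frac{d\omega}{d\mu}(x)=\frac{g(x+a)}{g(x)}\,\mathbf{1}_F(x)>\frac{\alpha-\epsilon}{\beta+\epsilon}\quad\text{on }F.$$
Since $\mu(F)>0$ this pointwise lower bound on $F$ survives as a lower bound for the $L^\infty(\mu)$-norm, and Theorem \ref{th0.1+} yields $B/A\ge(\alpha-\epsilon)/(\beta+\epsilon)$; letting $\epsilon\downarrow 0$ completes the proof in the main case.

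The degenerate cases are handled by the very same construction: if $\alpha=\infty$, replace $E_\alpha$ by $\{g>M\}$ (still of positive Lebesgue measure) and let $M\to\infty$; if $\beta=0$, use $E_\beta=\{0<g<\epsilon\}$ and let $\epsilon\downarrow 0$. In either situation the lower bound from Theorem \ref{th0.1+} blows up, contradicting the finiteness of $B/A$, so no frame measure can exist and the inequality holds vacuously. The only delicate point in this plan is to secure $\omega\ll\mu$ (needed to invoke Theorem \ref{th0.1+}), but the choice $F\subset\{g>0\}$ makes this automatic; past that step the argument is essentially bookkeeping.
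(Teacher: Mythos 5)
Your proof is correct and follows essentially the same route as the paper's own proof (Theorem \ref{th1.3}): locate a positive-measure set where $g$ is near its essential infimum, translate it into a set where $g$ is near its essential supremum, and feed the resulting Radon--Nikodym derivative $g(x+a)/g(x)$ into Theorem \ref{th0.1+}. The only differences are minor: you establish the overlap lemma via the Fubini/Steinhaus identity $\int\left|E_\beta\cap(E_\alpha-a)\right|\,da=|E_\alpha|\cdot|E_\beta|$ where the paper (Lemma \ref{lem1.4}) uses analyticity of $\widehat{\chi_C}\cdot\widehat{\chi_{-D}}$, and you treat the degenerate cases $\esssup_\mu(g)=\infty$, $\essinf_\mu(g)=0$ by the same construction rather than by restricting $\mu$ to a subset as the paper does.
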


 \medskip

It has been conjectured that a spectral measure must be uniform on its support. It is known that for discrete measures, spectral measures must have only finitely many atoms and the atoms must have equal weight (\cite{MR2200934,HLL11}). For absolutely continuous measures, spectral measures on finite union of intervals must have uniform density (\cite{MR2200934,DHJ09}). Now, an immediate corollary to the inequality in Theorem \ref{th0.1} is the complete solution to this problem in the case of absolutely continuous {\it spectral} measures. More generally, we have

 \begin{corollary}\label{th0.2}
In the hypotheses of Theorem \ref{th0.1} suppose $\mu=g\,dx$  admits a tight frame measure. Then $g$ is a characteristic function of its support.
\end{corollary}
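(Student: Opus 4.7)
The plan is to derive this as an immediate consequence of Theorem \ref{th0.1} applied in the tight frame case $A = B$. First, observe that if $\nu$ is a tight frame measure for $\mu$, then the constants $A$ and $B$ in the frame inequality can be chosen equal, so $B/A = 1$. Plugging this into Theorem \ref{th0.1} gives
\[
1 \;=\; \frac{B}{A} \;\geq\; \frac{\esssup_\mu(g)}{\essinf_\mu(g)}.
\]

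Next, I would argue that the reverse inequality $\esssup_\mu(g) \geq \essinf_\mu(g)$ is automatic on the support of $\mu$ (it is just the tautology that the essential supremum of a nonnegative function dominates its essential infimum with respect to the same measure). Combining this with the bound from Theorem \ref{th0.1} forces $\esssup_\mu(g) = \essinf_\mu(g)$, so $g$ is essentially equal to a single positive constant $c$ on its support $E$. Therefore $g = c\,\chi_E$ almost everywhere, which is a characteristic function of $E$ up to the multiplicative constant (matching the phrasing used in the abstract about the Lebesgue measure on $\Omega$ up to a constant).

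There is not really a main obstacle here; the work has all been done in Theorem \ref{th0.1}. The only subtlety worth flagging is the interpretation of the conclusion: strictly speaking one obtains $g = c\,\chi_E$ with $c > 0$, so one should either rescale $\mu$ by $1/c$ (absorbing the constant into the tight frame bound) or state the result as ``$g$ is constant on its support,'' which is equivalent to the corollary as formulated.
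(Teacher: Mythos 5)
Your proposal is correct and follows essentially the same route as the paper: Corollary \ref{cor1.5} likewise applies Theorem \ref{th1.3} with $A=B$ to conclude $\esssup_\mu(g)=\essinf_\mu(g)$, hence $g$ is a constant multiple of a characteristic function. Your remark about the normalizing constant $c$ matches the paper's own (more careful) phrasing of the conclusion in Corollary \ref{cor1.5}.
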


\medskip

For the case singular measures, the conjecture on spectral self-similar measures of {\L}aba and Wang in \cite{MR1929508} asserts that these spectral measures occur only for equal probability weights and when the digit set $\B$ has a tiling property. In the following, we consider the invariant measure associated to an affine iterated function system:
$$
\mu_{\B} = \sum_{b\in \B}p_b\mu_{\B}\circ\tau_{b}^{-1},
$$
where $\tau_b(x) =R^{-1}(x+b)$. Assuming also the \textit{no overlap} condition for $\mu_{\B}$ (i.e. $\mu_{\B}(\tau_b(X_B)\cap\tau_{b'}(X_B))=0$, where $X_\B$ is the {\it attractor} of the IFS) and checking the translational absolute continuity assumption in Theorem \ref{th1.2}, we prove the following result.

 \begin{theorem}\label{th0.3-}
If $\mu_{\B}$ defined above satisfies the no overlap condition and $\mu_{\B}$ admits a frame measure, then all $p_b$ must be equal.
\end{theorem}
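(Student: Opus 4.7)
The plan is to apply Theorem~\ref{th0.1+} not to individual digits of the IFS but to arbitrarily deep cylinders. Write $\tau_{\mathbf i}=\tau_{i_1}\circ\cdots\circ\tau_{i_n}$ and $p_{\mathbf i}=p_{i_1}\cdots p_{i_n}$ for a word $\mathbf i\in\B^n$. The first step is to verify, by induction on $n$, that the no-overlap condition and the Radon--Nikodym identity propagate to every level:
\[
\mu_\B\bigl(\tau_{\mathbf i}(X_\B)\cap\tau_{\mathbf j}(X_\B)\bigr)=0\quad(\mathbf i\neq\mathbf j),\qquad \mu_\B|_{\tau_{\mathbf i}(X_\B)}=p_{\mathbf i}\,\mu_\B\circ\tau_{\mathbf i}^{-1}.
\]
The base case uses the hypothesis, the fact that each $\tau_b$ is a bijection of $\br^d$, and the inequality $p_b\mu_\B\circ\tau_b^{-1}\leq\mu_\B$, to force $\mu_\B(\tau_c^{-1}(E))=0$ whenever $E\subset\tau_b(X_\B)$ and $c\neq b$; the inductive step splits a length-$(n+1)$ word as $(i_1,\mathbf i')$ and combines level-$1$ self-similarity on the outer map with the level-$n$ hypothesis on the inner word.

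From the explicit formula $\tau_{\mathbf i}(x)=R^{-n}x+t_{\mathbf i}$ with $t_{\mathbf i}=\sum_{k=1}^{n}R^{-k}i_k$, any two cylinders $\tau_{\mathbf i}(X_\B)$ and $\tau_{\mathbf j}(X_\B)$ (for $\mathbf i,\mathbf j\in\B^n$) are translates by $a:=t_{\mathbf j}-t_{\mathbf i}$. Set $F=\tau_{\mathbf i}(X_\B)$, so that $F+a=\tau_{\mathbf j}(X_\B)$, and define $\eta:=T_a\mu_\B|_{F+a}$ as in Theorem~\ref{th0.1+}. For $E\subset F$ a direct computation gives $\tau_{\mathbf j}^{-1}(E+a)=R^n(E-t_{\mathbf i})=\tau_{\mathbf i}^{-1}(E)$, and combining this with the level-$n$ self-similarity identities yields
\[
\eta(E)=\mu_\B(E+a)=p_{\mathbf j}\,\mu_\B(\tau_{\mathbf i}^{-1}(E)),\qquad \mu_\B(E)=p_{\mathbf i}\,\mu_\B(\tau_{\mathbf i}^{-1}(E)).
\]
Since $\eta$ is concentrated on $F$, we conclude $\eta\ll\mu_\B$ with $d\eta/d\mu_\B=(p_{\mathbf j}/p_{\mathbf i})\chi_F$, so Theorem~\ref{th0.1+} yields
\[
\frac{B}{A}\,\geq\,\frac{p_{\mathbf j}}{p_{\mathbf i}}\qquad\text{for every pair }\mathbf i,\mathbf j\in\B^n.
\]

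Finally, suppose for contradiction that the weights are not all equal: then there exist $b,b'\in\B$ with $p_b>p_{b'}$. Choosing $\mathbf j=(b,\dots,b)$ and $\mathbf i=(b',\dots,b')$ of length $n$, the previous inequality becomes $B/A\geq(p_b/p_{b'})^n$ for every $n\in\bn$, which contradicts the finiteness of $B/A$. Hence all $p_b$ coincide, and in view of $\sum_b p_b=1$ they all equal $1/|\B|$. I expect the only genuinely delicate point to be the iterated no-overlap/self-similarity statement of the first step; once that is in hand, the translation-and-ratio computation, followed by the divergent geometric tower $(p_b/p_{b'})^n$ that delivers the contradiction, are both short.
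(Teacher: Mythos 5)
Your proposal is correct and follows essentially the same route as the paper: the paper's Lemma~\ref{lem4.3} likewise uses the no-overlap condition to get $\mu(E)=p_b^n\mu(\tau_b^{-n}(E))$ on the cylinder $\tau_b^n(X)$, translates $\tau_b^n(X)$ onto $\tau_c^n(X)$ by $a=R^{-n}(c^{(n)}-b^{(n)})$ to obtain the Radon--Nikodym derivative $p_c^n/p_b^n$, and then applies Theorem~\ref{th0.1+} and lets $n\to\infty$. Your version merely carries the cylinder identities for arbitrary words before specializing to the constant words $(b,\dots,b)$ and $(b',\dots,b')$, which is harmless extra generality.
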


If the affine iterated function system does not satisfy the no overlap condition, it is not known whether we still have the above conclusion. However,  with a freedom of choosing the probability weights and the maps, it is of interest to investigate the existence of frame measures in this case. We found that the frame bounds, probability weights and the contraction ratio are closely related. In particular, we can solve the {\L}aba-Wang conjecture when the self-similar measures is absolutely continuous.

\begin{theorem}\label{th0.4-}
Suppose $\mu$ defined in (\ref{eq4.1.2}) is absolutely continuous with respect to the Lebesgue measure and suppose $\mu$ admits a tight frame measure. Then

\vspace{0.2 cm} {\rm (i)} $p_1=\cdots=p_N=\lambda$.

\vspace{0.2 cm} {\rm (ii)} $\lambda=\frac{1}{N}$.

\vspace{0.2 cm} {\rm (iii)} There exists $\alpha>0$ such that $\D: = \alpha\B\subset{\Bbb Z}$ and $\D$ tiles ${\Bbb Z}$.

\end{theorem}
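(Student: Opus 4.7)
The plan is to reduce the theorem to the geometry of self-affine tiles via Corollary \ref{th0.2}. First, since $\mu$ is absolutely continuous and admits a tight frame measure, Corollary \ref{th0.2} forces $\mu = c\,\chi_\Omega\,dx$ for some constant $c>0$, where $\Omega = \supp\mu$. This collapses the question into one about the structure of the support $\Omega$ together with the digits and weights.

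Next, I substitute this structure into the invariance equation (\ref{eq4.1.2}) and use the fact that each $\tau_b$ has constant Jacobian $|\det R|^{-1}$ to obtain the almost-everywhere identity
$$\chi_\Omega(x) = |\det R|\,\sum_{b\in\B} p_b\,\chi_{\tau_b(\Omega)}(x).$$
Since $\Omega = \bigcup_{b\in\B} \tau_b(\Omega)$, comparing Lebesgue measures on both sides shows the translates $\tau_b(\Omega)$ must be pairwise essentially disjoint, because any overlap would overshoot the left-hand side. The identity then collapses to $p_b\,|\det R| = 1$ for every $b \in \B$, which gives (i) with $\lambda = 1/|\det R|$. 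Combining this with $\sum_b p_b = 1$ forces $N\lambda = 1$, hence $\lambda = 1/N$ and $|\det R| = N$; this is (ii).

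Finally, for (iii), the essential partition $\Omega = \bigsqcup_{b\in\B}\tau_b(\Omega)$ rewrites as the self-affine tile equation $R\,\Omega = \Omega + \B$ (disjoint modulo Lebesgue null sets), with $|\det R| = \#\B = N$. I then plan to invoke the classical structure theorem of Lagarias--Wang for integral self-affine tiles, which in the one-dimensional setting produces a $\beta > 0$ such that $\Omega$ tiles $\mathbb{R}$ by translations from $\beta\,\mathbb{Z}$. Rescaling by $\alpha = \beta^{-1}$ converts the identity $R\,\Omega = \Omega + \B$ into a tiling identity inside $\mathbb{Z}$, forcing $\D := \alpha\B \subset \mathbb{Z}$ and $\D$ to tile $\mathbb{Z}$. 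The main obstacle I expect is precisely this last step: unlike (i) and (ii), which follow by a direct density comparison, the passage from the self-affine identity to a genuine integer tiling of the digit set is not computational and must rest on the rigidity provided by the structure theory of self-affine tiles.
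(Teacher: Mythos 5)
Your opening move matches the paper's: Corollary \ref{cor1.5} turns the tight-frame hypothesis into $\mu=c\,\chi_\Omega\,dx$. But the step where you deduce (i) and (ii) has a genuine gap. Plugging $\mu=c\chi_\Omega\,dx$ into the invariance equation gives, a.e., $\chi_\Omega(x)=\lambda^{-1}\sum_{b\in\B}p_b\,\chi_{\tau_b(\Omega)}(x)$, i.e.\ $\sum_{\{b:\,x\in\tau_b(\Omega)\}}p_b=\lambda$ for a.e.\ $x\in\Omega$. Your claim that ``comparing Lebesgue measures shows the translates must be pairwise essentially disjoint, because any overlap would overshoot'' does not hold: integrating both sides yields $\mathcal{L}(\Omega)=\lambda^{-1}\bigl(\sum_b p_b\bigr)\lambda\,\mathcal{L}(\Omega)=\mathcal{L}(\Omega)$ for \emph{any} configuration of overlaps, since $\sum_b p_b=1$ and every piece has measure $\lambda\mathcal{L}(\Omega)$; and pointwise, a point lying in several pieces ``overshoots'' only if the corresponding weights sum to more than $\lambda$, which is exactly what you do not yet know. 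A priori the identity is perfectly compatible with overlapping pieces carrying small weights (e.g.\ two pieces overlapping where $p_b+p_{b'}=\lambda$), so disjointness is logically equivalent to the conclusion $p_b=\lambda$ for all $b$ and cannot be extracted by a density count alone. The argument as written is circular at this point.

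What is missing is the ordering argument that the paper supplies. The paper first establishes $p_1=p_N=\lambda$ via Theorem \ref{th4.3} (the fixed points $0$ and $1$ of $\tau_{b_1}$ and $\tau_{b_N}$ lie outside all other pieces, so Proposition \ref{Prop4.1} applies and $\mu(\tau_{b_1}^n(X))=Cp_1^n$ can be compared with $c\lambda^n\mathcal{L}(X)$); it then deduces that $\tau_{b_1}(X)$ is essentially disjoint from the other pieces, and runs an induction on $k$ using the small sets $A_k=\tau_{b_k}\tau_{b_1}^n(X)$ sitting at the \emph{left} end of $\tau_{b_k}(X)$: for $n$ large these miss every $\tau_{b_\ell}(X)$ with $\ell>k$ because $b_\ell>b_k$, and the inductive hypothesis kills the terms with $\ell<k$, so only the $k$-th term survives in the invariance identity and $p_k=\lambda$ follows. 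Some such use of the linear order of the digits (equivalently, an ``essential infimum'' induction) is indispensable, and your proposal has no substitute for it. For (iii) your plan is closer to the paper's, but note that it is not enough to cite Lagarias--Wang for $\alpha\B\subset\Z$: a bounded tile of $\br$ does not tautologically tile by a lattice, and the passage from the self-replicating tiling set ${\mathcal J}=N{\mathcal J}\oplus\D$ to the statement that $\D$ tiles $\Z$ is a separate argument (the paper's Proposition \ref{prop5.1}, via the partition of $[0,1)$ into the sets $X_{{\mathcal G}_j}$), which you correctly flag as the obstacle but leave open.
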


 To formulate this in another way, Theorem \ref{th0.4-} shows that the only absolutely continuous self-similar measures admitting exponential orthonormal bases/ tight frames/ tight frame measures are the measures supported on a self-similar tile by (ii) and \cite{LgW1}. The statement in (iii) says that tile digit set $\D$ will be a scaled integer tile. This is proved by considering the self-replicating tiling set of the attractor $X_\B$.

 Our study is based on the translational absolute continuity assumption. We were not able to show that measures with frame measures must have always this property. But from all the examples that we have, this conjecture should be true. Nonetheless, we can construct examples of singular measures for which the translational absolute continuity assumption in Theorem \ref{th0.1+} fails.

\bigskip

Our results on frame measures and spectral measures also have applications to {\it Gabor systems} (also known as {\it Weyl-Heisenberg systems}). Given $g\in L^2({\Bbb R}^d)$ and a discrete set $\Gamma\in{\Bbb R}^{2d}$, a \textit{Gabor system} is a set of functions:
$$
{\mathcal G}(g,\Gamma) = \{e^{2\pi i  a\cdot x}g(x-b): a,b\in{\Bbb R^d } \ {\mbox{and}} \ (a,b)\in\Gamma\}.
$$
Such a system is called a \textit{Gabor frame (Gabor orthonormal basis)} if ${\mathcal G}(g,\Gamma)$ is a frame (an orthonormal basis) on $L^2({\Bbb R}^d)$. If $\Gamma = \Lambda\times{\mathcal J}$, we will write ${\mathcal G}(g,\Lambda,{\mathcal J}) = {\mathcal G}(g,\Lambda\times{\mathcal J})$. Basic theory of Gabor systems can be found in \cite{G00} and the references therein.

\medskip

In \cite{LW}, the function $g = ({\mathcal L}(\Omega))^{-1/2}\chi_{\Omega}$ (${\mathcal L}$ denotes the Lebesgue measure) with $\Lambda$ and ${\mathcal J}$ discrete subsets of ${\Bbb R}^d$ were considered and the following proposition is proved:

\begin{proposition}\label{th0.3}
\cite{LW}  Suppose that

{\rm (i) $|g| = ({\mathcal L}(\Omega))^{-1/2}\chi_{\Omega}$ where $\Omega$ is a bounded measurable set.}

{\rm (ii)} $\{e_{\lambda}:\lambda\in\Lambda\}$ is an orthonormal basis of $L^2(\Omega)$ and

{\rm (iii)} ${\mathcal J}$ is a tiling set of $\Omega$.

\noindent Then ${\mathcal G}(g,\Lambda,{\mathcal J})$ is a Gabor orthonormal basis of $L^2({\Bbb R}^d)$.
\end{proposition}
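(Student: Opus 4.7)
The plan is to use the tiling condition (iii) to split $L^2(\mathbb{R}^d)$ as the orthogonal direct sum $\bigoplus_{t\in\mathcal{J}}L^2(\Omega+t)$, and then to use (ii) to produce an ONB of each summand from the Gabor atoms $e_\lambda g(\cdot-t)$. Write $g(x)=\mathcal{L}(\Omega)^{-1/2}\phi(x)\chi_\Omega(x)$ with $|\phi|\equiv 1$ a.e.\ on $\Omega$. Since $g(\cdot-t)$ is supported in $\Omega+t$, atoms indexed by distinct $t,t'\in\mathcal{J}$ have essentially disjoint supports and are automatically orthogonal, and $\|f\|_2^2=\sum_{t\in\mathcal{J}}\|f\chi_{\Omega+t}\|_2^2$ for every $f\in L^2(\mathbb{R}^d)$.

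For fixed $t\in\mathcal{J}$ the phase factor $\phi(x-t)\overline{\phi(x-t)}$ cancels, and the substitution $x=y+t$ gives
\[
\langle e_\lambda g(\cdot-t),\, e_{\lambda'}g(\cdot-t)\rangle_{L^2(\mathbb{R}^d)} = \mathcal{L}(\Omega)^{-1}e^{2\pi i(\lambda-\lambda')\cdot t}\int_\Omega e^{2\pi i(\lambda-\lambda')\cdot y}\,dy,
\]
which by (ii) equals $1$ when $\lambda=\lambda'$ and $0$ otherwise. Hence $\{e_\lambda g(\cdot-t):\lambda\in\Lambda\}$ is an orthonormal family in $L^2(\Omega+t)$, and combining with the support-disjointness across distinct $t$ shows that $\mathcal{G}(g,\Lambda,\mathcal{J})$ is orthonormal in $L^2(\mathbb{R}^d)$.

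For completeness, given $f\in L^2(\mathbb{R}^d)$ and $t\in\mathcal{J}$, set $F_t(y)=\mathcal{L}(\Omega)^{1/2}\overline{\phi(y)}f(y+t)\chi_\Omega(y)\in L^2(\Omega)$. Expanding $F_t$ in the exponential ONB of $L^2(\Omega)$ afforded by (ii) and pulling back via $y=x-t$ (which only introduces the unimodular modulation phases $e^{-2\pi i\lambda\cdot t}$ into the coefficients) exhibits $f\chi_{\Omega+t}$ as an $L^2$-convergent combination of $\{e_\lambda g(\cdot-t):\lambda\in\Lambda\}$, with the matching Parseval identity on each piece. Summing over $t\in\mathcal{J}$ and invoking the tiling-induced orthogonal decomposition yields the global Parseval identity and the spanning property, so $\mathcal{G}(g,\Lambda,\mathcal{J})$ is an ONB of $L^2(\mathbb{R}^d)$. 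There is no real obstacle here; the only thing to keep track of is the bookkeeping of the unimodular factor $\phi$ and the modulation phases $e^{2\pi i\lambda\cdot t}$, both of which turn out to cancel harmlessly.
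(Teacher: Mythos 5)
Your argument is correct and is precisely the ``standard generalization of the proof that ${\mathcal G}(\chi_{[0,1]},\mathbb{Z},\mathbb{Z})$ is a Gabor orthonormal basis'' that the paper alludes to without writing out (the proof is deferred to \cite{LW}): the tiling hypothesis gives the orthogonal decomposition $L^2(\mathbb{R}^d)=\bigoplus_{t\in\mathcal{J}}L^2(\Omega+t)$, and on each summand the exponential ONB of $L^2(\Omega)$ transports to $\{e_\lambda g(\cdot-t)\}_{\lambda\in\Lambda}$, with the unimodular factor $\phi$ and the phases $e^{-2\pi i\lambda\cdot t}$ cancelling exactly as you track. No gaps; only make sure the normalization of ``orthonormal basis of $L^2(\Omega)$'' in (ii) (i.e.\ $\mathcal{L}(\Omega)^{-1/2}e_\lambda$ versus $e_\lambda$) is stated consistently with the constant $\mathcal{L}(\Omega)^{-1/2}$ in $g$, which your bookkeeping already does implicitly.
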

${\mathcal J}$ is a \textit{tiling set} of $\Omega$ means that $\bigcup_{t\in{\mathcal J}}(\Omega+t)$ covers ${\Bbb R^d}$ and the intersection of $\Omega+t$ and $\Omega+t'$ has zero Lebesgue measure for distinct $t$ and $t'$. In this case, ${\Omega}$ is a \textit{translational tile}. The proof of this proposition is  a standard generalization of the proof that ${\mathcal G}(\chi_{[0,1]},{\Bbb Z},{\Bbb Z})$ is a Gabor orthonormal basis.

In the literature on Gabor systems, there are many examples of functions $g$ that form a Gabor frame with some $\Gamma$. For example, if $g$ is a compactly supported function with $|g(x)|\geq c>0$ on some small cube, then there exists a $\Gamma$ so that ${\mathcal G}(g,\Gamma)$ is a Gabor frame (see [Gro00, p.125]). However, the requirement for orthonormal bases is more restrictive. There is no known example of a function $g$ which is not a characteristic function such that its associated Gabor system forms an orthonormal basis with some $\Gamma$. Therefore, Liu and Wang conjectured that the converse of the above proposition holds and they proved it in the case when $g$ is supported on an interval. In the following, we prove

\begin{theorem}\label{th0.4}
If the window function $g$ is non-negative, the converse of Proposition \ref{th0.3} holds.
\end{theorem}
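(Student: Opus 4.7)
The plan is to decouple the Gabor ONB condition into contributions coming from each $t\in\mathcal J$ and then apply Corollary~\ref{th0.2} to identify the density. Set $\Omega:=\operatorname{supp}(g)$. Orthonormality of the Gabor system at $\lambda=\lambda'$ but distinct $t,t'\in\mathcal J$ forces
$$0 \;=\; \int g(x-t)\,g(x-t')\,dx;$$
since $g\geq 0$, this gives $g(\cdot-t)g(\cdot-t')=0$ a.e., i.e.~the translates $\{t+\Omega\}_{t\in\mathcal J}$ are pairwise essentially disjoint. If their union missed a set $E$ of positive measure, then for any bounded $E'\subset E$ with $|E'|>0$ the function $f:=\chi_{E'}$ would satisfy $\langle f,e_\lambda g(\cdot-t)\rangle=0$ for every $(\lambda,t)$, contradicting Parseval. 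Hence $\{t+\Omega\}_{t\in\mathcal J}$ tiles $\br^d$, which is (iii).

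Next I would isolate one translate. Fix $t_0\in\mathcal J$ and $F\in L^2(\Omega,dx)$ and put $f(x):=F(x-t_0)$. By the packing the cross terms $\langle f,e_\lambda g(\cdot-t)\rangle$ vanish for $t\neq t_0$, so Parseval collapses; after a translation-by-$t_0$ change of variable (which contributes only a unimodular phase $e^{-2\pi i\lambda t_0}$ that disappears in the modulus) this reads
$$\|F\|^2_{L^2(\Omega,dx)} \;=\; \sum_{\lambda\in\Lambda}\bigl|\widehat{F g}(\lambda)\bigr|^{2}.$$
Simultaneously, orthonormality of $\{e_\lambda g(\cdot-t_0)\}_\lambda$ in $L^2(\br^d)$ transfers to orthonormality of $\{g\,e_\lambda\}_\lambda$ in $L^2(\Omega,dx)$, and combined with the Parseval identity above this makes $\{g\,e_\lambda\}_\lambda$ an ONB of $L^2(\Omega,dx)$.

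Now set $\mu:=g^{2}\,dx$. For $h\in L^2(\mu)$, substitute $F:=hg$ (valid because $g>0$ a.e.~on $\Omega$); this yields $\|F\|_{L^2(\Omega,dx)}^{2}=\|h\|^{2}_{L^2(\mu)}$ and $\widehat{Fg}(\lambda)=\widehat{h\,d\mu}(\lambda)$, so the displayed identity rewrites as
$$\|h\|^{2}_{L^2(\mu)} \;=\; \sum_{\lambda\in\Lambda}\bigl|\widehat{h\,d\mu}(\lambda)\bigr|^{2} \qquad (h\in L^2(\mu)).$$
Thus $\nu:=\sum_{\lambda\in\Lambda}\delta_\lambda$ is a Plancherel (hence tight frame) measure for $\mu$, and Corollary~\ref{th0.2} forces the density $g^{2}$ of $\mu$ to be constant on $\Omega$. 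Combining with $\int g^{2}=1$, which is the $(\lambda,t)=(\lambda',t')$ case of orthonormality, pins $g=\mathcal L(\Omega)^{-1/2}\chi_\Omega$; this is (i), and plugging this explicit form of $g$ into the ONB property of $\{g\,e_\lambda\}_\lambda$ yields (ii). The main obstacle is the algebraic bookkeeping around the change of variable $F=hg$, which forces one to commit to $\Omega$ as the essential support of $g$ so that the substitution is a.e.~valid; once this is in place, Corollary~\ref{th0.2} does the heavy lifting.
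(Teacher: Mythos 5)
Your proof is correct and follows essentially the same route as the paper: tiling of $\br^d$ by $\{\Omega+t\}_{t\in\mathcal J}$ from completeness plus orthogonality and non-negativity, reduction to the statement that $\sum_{\lambda\in\Lambda}\delta_\lambda$ is a Plancherel (tight frame) measure for $|g|^2\,dx$, and then Corollary \ref{th0.2} to force $g$ to be a normalized characteristic function. The only cosmetic difference is that you obtain the Plancherel identity for all $h\in L^2(g^2\,dx)$ by testing Parseval against arbitrary translates $F(\cdot-t_0)$ and substituting $F=hg$, whereas the paper tests against the functions $g\,e_t$ and invokes the Jorgensen--Pedersen criterion; both rest on the same decoupling of the cross terms via Claim 2.
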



\medskip

We organize the paper as follows: we will prove Theorem \ref{th0.1+} and Theorem \ref{th0.1} in section 2 as Theorem \ref{th1.2} and \ref{th1.3} respectively. Then we give a discussion of the corollaries of Theorem \ref{th0.1}, in particular, Corollary \ref{th0.2}. In section 3, we consider the affine iterated function system and prove Theorem \ref{th0.3-}. In section 4, we investigate the iterated function system on ${\Bbb R}^1$ and prove Theorem \ref{th0.4-}. In section 5, we present some concluding remarks on frame measures for singular measures. In the last section, we will focus on Gabor orthonormal bases and prove Theorem \ref{th0.4}.

\medskip

\section{Frame measures}

\begin{definition}\label{def1.1}
Let $\mu$ be a Borel measure on $\br^d$. For a Borel subset $E$ of $\br^d$, we denote by $\mu|_E$ the restriction of $\mu$ to the set $E$, i.e.,
$$\mu|_E(F):=\mu(E\cap F),\mbox{ for all Borel subsets }F\mbox{ of }\br^d.$$

For $a\in\br^d$, we denote by $T_a\mu$, the translation by $a$ of the measure $\mu$, i.e.,
$$T_a\mu(F):=\mu(F+a),\mbox{ for all Borel subsets }F\mbox{ of }\br^d.$$
This means that
$$\int f\,dT_a\mu=\int f(x-a)\,d\mu(x)$$
for all functions $f\in L^1(T_a\mu)$.
\end{definition}

 Throughout the paper, we will use the standard notation $\mu\ll\nu$ to indicate that $\mu$ is absolutely continuous with respect to $\nu$ and we use the notation $\frac{d\mu}{d\nu}$ for its Radon-Nikodym derivative if $\nu$ is $\sigma-$finite. The following theorem is the key step for the next results in this paper.
\begin{theorem}\label{th1.2}
Let $\mu$ be a finite Borel measure on $\br^d$ and suppose there exists a frame measure $\nu$ for $\mu$, with frame bounds $A,B>0$. Assume in addition that there exists a set $F$ of positive measure $\mu$ and $a\in\br^d$ such that the measure $T_a(\mu|_{F+a})\ll\mu$. Then
\begin{equation}
\frac{B}{A}\geq\left\|\frac{dT_a(\mu|_{F+a})}{d\mu}\right\|_\infty.
\label{eq1.2.1}
\end{equation}

\end{theorem}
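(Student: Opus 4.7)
The plan is to construct a clever pair of test functions that expose the density $h = dT_a(\mu|_{F+a})/d\mu$ by playing the upper frame bound on one against the lower frame bound on the other. The crucial observation is that translation in the spatial variable and application of the Radon-Nikodym derivative produce the same $L^2(\nu)$-norm for two different functions in $L^2(\mu)$, because translation manifests itself as a unimodular factor on the Fourier transform.

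First I would fix notation. Let $\omega = T_a(\mu|_{F+a})$ and let $h = d\omega/d\mu$; since $\omega$ is a finite measure dominated on sets disjoint from $F$ by $\mu((G+a)\cap(F+a))=0$, one observes that $h$ is supported on $F$ modulo $\mu$-null sets. For a bounded function $f\in L^2(\mu)$ supported on $F$, I would introduce the translate
\[
\widetilde f(x) = f(x-a)\,\chi_{F+a}(x).
\]
A direct change of variables using the definition of $T_a$ gives $\|\widetilde f\|_{L^2(\mu)}^2 = \int_F |f|^2 h\,d\mu$, and also
\[
\widehat{\widetilde f\,d\mu}(t) = e^{-2\pi i t\cdot a}\,\widehat{f h\,d\mu}(t),
\]
so the two Fourier transforms agree in modulus and hence have equal $L^2(\nu)$-norms.

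Next I would feed both ends of the frame inequality into this identity. The upper bound applied to $\widetilde f$ gives $\|\widehat{\widetilde f\,d\mu}\|_{L^2(\nu)}^2 \leq B\int_F |f|^2 h\,d\mu$. The lower bound applied to $fh$ gives $A\int_F |f|^2 h^2\,d\mu \leq \|\widehat{fh\,d\mu}\|_{L^2(\nu)}^2$. Since the two $L^2(\nu)$-norms are equal, one obtains the central estimate
\[
A\int_F |f|^2 h^2\,d\mu \;\leq\; B\int_F |f|^2 h\,d\mu,
\]
valid for every bounded $f\in L^2(\mu)$ supported on $F$ for which $fh\in L^2(\mu)$. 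I would then rewrite this as $\int |f|^2 h(Ah-B)\,d\mu \leq 0$.

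To finish, I would test this inequality against $f = \chi_{T_n}$ where $T_n = \{x\in F : h(x)>B/A\}\cap\{h\leq n\}$, the truncation being there only to guarantee $fh\in L^2(\mu)$. On $T_n$ the integrand $h(Ah-B)$ is strictly positive, so the inequality forces $\mu(T_n)=0$; letting $n\to\infty$ (which exhausts $T=\{h>B/A\}$ up to a null set since $h<\infty$ a.e.) yields $\|h\|_\infty \leq B/A$. The main technical subtlety I expect is the integrability bookkeeping in the last step, i.e.\ ensuring that $fh\in L^2(\mu)$ so that the lower frame bound applies legitimately; truncating by $\{h\leq n\}$ handles this cleanly without circular reliance on the very bound being proved.
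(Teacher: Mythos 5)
Your proof is correct. It shares with the paper the central identity of the argument: for $f$ supported on $F$, the translate $\widetilde f=f(\cdot-a)\chi_{F+a}$ satisfies $\|\widetilde f\|_{L^2(\mu)}^2=\int|f|^2h\,d\mu$ and $|\widehat{\widetilde f\,d\mu}|=|\widehat{fh\,d\mu}|$ pointwise, so the two Fourier transforms have the same $L^2(\nu)$-norm. Where you diverge is in how the frame bounds are exploited. The paper fixes $\epsilon>0$, chooses a near-extremal set $E$ where $M-\epsilon\leq h\leq M$ (after first restricting to force $M<\infty$), takes $f_1=\mu(E)^{-1/2}\chi_E$, approximates $f_1h$ by $Mf_1$ in $L^2(\mu)$ with error $\epsilon$, and compares the ratio $\|\widehat{Mf_1\,d\mu}\|^2/\|\widehat{f_1(\cdot-a)\,d\mu}\|^2$ from above and below before letting $\epsilon\to0$. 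You instead apply the lower bound directly to $fh$ and the upper bound to $\widetilde f$, obtaining the clean weighted inequality $A\int|f|^2h^2\,d\mu\leq B\int|f|^2h\,d\mu$ for all admissible $f$, and then localize with $f=\chi_{T_n}$, $T_n=\{h>B/A\}\cap\{h\leq n\}$, to conclude $h\leq B/A$ $\mu$-a.e. This buys you two things: there is no $\epsilon$-approximation bookkeeping, and the case $\esssup_\mu h=\infty$ is handled automatically by the truncation rather than by the paper's preliminary reduction ``restricting to a subset of $F$ we can assume $M<\infty$.'' The integrability checks you flag ($\widetilde f$ and $fh$ bounded, $\mu$ finite, $h\in L^1(\mu)$ so $h<\infty$ a.e.) are exactly the right ones and they all go through.
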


\begin{proof}
Let $h:=\frac{dT_a(\mu|_{F+a})}{d\mu}$ and let $M:=\|h\|_\infty$. Of course, if $M<1$ there is nothing to prove, so we can assume $M\geq1$. Restricting to a subset of $F$ we can assume also $M<\infty$. We have for bounded functions $f$:
$$\int f\,dT_a\mu|_{F+a}=\int f(x-a)\,d\mu|_{F+a}(x)=\int_{F+a} f(x-a)\,d\mu(x)$$
and
$$\int f\,dT_a\mu|_{F+a}=\int f(x)h(x)\,d\mu(x).$$
Therefore the values of the function $f$ can be ignored outside $F$ and so we can assume $f$ is supported on $F$ and the same is true for $h$; and we have:
$$
\int_{F+a}f(x-a)\,d\mu(x)=\int_F f(x)h(x)\,d\mu(x).
$$

Since $M$ is the essential supremum of $h$, given $\epsilon>0$, we can find a subset $E$ of $F$, of positive measure $\mu$, such that $M-\epsilon\leq h\leq M$ on $E$.

Take $f_1:=\frac{1}{\sqrt{\mu(E)}}\chi_E$. We have $\|f_1\|_{L^2(\mu)}=1$. Also
$$\|f_1(\cdot-a)\|_{L^2(\mu)}^2=\int |f_1(x-a)|^2\,d\mu(x)=\int |f_1(x)|^2h(x)\,d\mu(x),$$
therefore $$M-\epsilon\leq \|f_1(\cdot-a)\|_{L^2(\mu)}^2\leq M.$$

We have
$$({f_1(\cdot-a)\,d\mu})^{\widehat{}}(t)=\int f_1(x-a)e^{-2\pi it\cdot x}\,d\mu(x)=e^{-2\pi i t\cdot a}\int f_1(x-a)e^{-2\pi it\cdot(x-a)}\,d\mu(x)$$$$=e^{-2\pi i t\cdot a}\int f_1(x)e^{-2\pi it\cdot x}h(x)\,d\mu(x)=e^{-2\pi i t\cdot a}\widehat{f_1h\,d\mu}(t)$$
This means that
$$|({f_1(\cdot-a)\,d\mu})^{\widehat{}}|=|\widehat{f_1h\,d\mu}|.$$

Next we estimate
$$\int|Mf_1-f_1h|^2\,d\mu=\int|f_1|^2|h-M|^2\,d\mu\leq \epsilon^2\|f_1\|_{L^2(\mu)}^2=\epsilon^2.$$
Then, using the upper frame bound:
$$\|\widehat{Mf_1\,d\mu}-\widehat{fh\,d\mu}\|_{L^2(\nu)}^2\leq B\|Mf_1-f_1h\|_{L^2(\mu)}^2\leq \epsilon^2B.$$
This implies that
$$\left|\|\widehat{Mf_1\,d\mu}\|_{L^2(\nu)}^2-\|(f_1(\cdot-a)\,d\mu)^{\widehat{}}\|_{L^2(\nu)}^2\right|=$$$$
\left|\|\widehat{Mf_1\,d\mu}\|_{L^2(\nu)}-\|(f_1(\cdot-a)\,d\mu)^{\widehat{}}\|_{L^2(\nu)}\right|\cdot\left(\|\widehat{Mf_1\,d\mu}\|_{L^2(\nu)}+\|(f_1(\cdot-a)\,d\mu)^{\widehat{}}\|_{L^2(\nu)}\right)$$
$$\leq \|\widehat{Mf_1\,d\mu}-\widehat{fh\,d\mu}\|_{L^2(\nu)}\cdot(\sqrt{B}M+\sqrt{B}\sqrt{M})\leq \epsilon\sqrt{B}(\sqrt{B}M+\sqrt{B}\sqrt{M})=:C\epsilon.$$

Then
$$\frac{\|\widehat{Mf_1\,d\mu}\|_{L^2(\nu)}^2}{\|(f_1(\cdot-a)\,d\mu)^{\widehat{}}\|_{L^2(\nu)}^2}=1+\frac{\|\widehat{Mf_1\,d\mu}\|_{L^2(\nu)}^2-\|(f_1(\cdot-a)\,d\mu)^{\widehat{}}\|_{L^2(\nu)}^2}{\|(f_1(\cdot-a)\,d\mu)^{\widehat{}}\|_{L^2(\nu)}^2}\leq 1+\frac{C\epsilon}{A(M-\epsilon)}.$$
On the other hand
$$\frac{\|\widehat{Mf_1\,d\mu}\|_{L^2(\nu)}^2}{\|(f_1(\cdot-a)\,d\mu)^{\widehat{}}\|_{L^2(\nu)}^2}\geq \frac{A\|Mf_1\|_{L^2(\mu)}^2}{B\|f_1(\cdot-a)\|_{L^2(\mu)}^2}\geq \frac{AM^2}{B(M-\epsilon)}.$$

Combining the two inequalities, and letting $\epsilon$ go to zero, we obtain
$$\frac{B}{A}\geq M$$
which is the desired inequality.

\end{proof}

\begin{definition}\label{def1.3}
Let $\mu$ be a Borel measure on $\br^d$. Let $f$ be a non-negative Borel measurable function. We define the {\it essential supremum} of $f$:
$${\esssup}_\mu(f)=\|f\|_\infty:=\inf\{ M\in[0,\infty] : f\leq M,\mu-\mbox{a.e.}\}.$$
We define the {\it essential infimum } of $f$:
$${\essinf}_\mu(f):=\sup\{ m\geq 0 : f\geq m,\mu-\mbox{a.e.}\}.$$
\end{definition}

\begin{theorem}\label{th1.3}
Let $\mu=g\,dx$ be an absolutely continuous measure on $\br^d$. If $\nu$ is a frame measure for $\mu$ with frame bounds $A,B>0$ then
$$\frac BA\geq \frac{\esssup_\mu(g)}{\essinf_\mu(g)}.$$

In particular, if $\esssup_\mu(g)=\infty$ or $\essinf_\mu(g)=0$ then there is no frame measure for $\mu$.
\end{theorem}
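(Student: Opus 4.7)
The plan is to apply Theorem~\ref{th1.2} after a judicious choice of the set $F$ and the translation vector $a$, tuned so that the ratio $dT_a(\mu|_{F+a})/d\mu$ produces values arbitrarily close to $\esssup_\mu(g)/\essinf_\mu(g)$.

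First I would compute the density of $T_a(\mu|_{F+a})$ with respect to Lebesgue measure by an elementary change of variable: for Borel $E$,
\[
T_a(\mu|_{F+a})(E)=\int_{F+a}\chi_{E+a}(x)g(x)\,dx=\int_{F}\chi_{E}(y)g(y+a)\,dy,
\]
so $T_a(\mu|_{F+a})$ has Lebesgue density $\chi_F(y)g(y+a)$. Choosing $F$ inside $\{g>0\}$ then gives $T_a(\mu|_{F+a})\ll\mu$ with Radon--Nikodym derivative $h(y)=\chi_F(y)g(y+a)/g(y)$ ($\mu$-a.e.), which automatically verifies the absolute-continuity hypothesis of Theorem~\ref{th1.2}. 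This is the entire reason the theorem applies in the Lebesgue-a.c.\ setting without any extra assumption.

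The second step is to arrange the geometry so that $\|h\|_\infty$ almost reaches $M/m$, where $M:=\esssup_\mu(g)$ and $m:=\essinf_\mu(g)$. Assuming first $0<m<M<\infty$ and fixing $\epsilon>0$, the level sets $E_m:=\{m\leq g\leq m+\epsilon\}$ and $E_M:=\{M-\epsilon\leq g\leq M\}$ both have positive Lebesgue measure by the very definition of essential infimum and supremum. To align them by a translation I would use the Fubini identity
\[
\int_{\br^d}\bigl|E_M\cap(E_m+a)\bigr|\,da=|E_m|\cdot|E_M|>0,
\]
which yields some $a\in\br^d$ with $|E_M\cap(E_m+a)|>0$. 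Setting $F:=E_m\cap(E_M-a)$, we have $g(y)\leq m+\epsilon$ on $F$ and $g(y+a)\geq M-\epsilon$ on $F+a$, so $h\geq(M-\epsilon)/(m+\epsilon)$ on a set of positive $\mu$-measure. Theorem~\ref{th1.2} then gives $B/A\geq(M-\epsilon)/(m+\epsilon)$, and letting $\epsilon\downarrow 0$ completes this case.

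The degenerate cases $m=0$ or $M=\infty$ follow the same recipe: for any prescribed $K>0$ one may choose $E_m\subset\{0<g\leq 1/K\}$ (if $m=0$) or $E_M\subset\{g\geq K\}$ (if $M=\infty$), both of positive Lebesgue measure, and run the same Fubini-translation argument to force $\|h\|_\infty\geq K$, hence $B/A\geq K$ for every $K$, so no frame measure can exist. The only conceptual obstacle is locating the overlapping translation $a$, and that is handled cleanly by the Fubini (Steinhaus-type) argument above; the remaining work is the density computation and bookkeeping of $\epsilon$'s.
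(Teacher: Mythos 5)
Your proof is correct and follows the same overall strategy as the paper: verify the translational absolute continuity hypothesis of Theorem~\ref{th1.2} directly from the Lebesgue density, then choose $F$ and $a$ so that $g$ is near $\essinf_\mu(g)$ on $F$ and near $\esssup_\mu(g)$ on $F+a$. The one substantive divergence is in the overlap lemma: the paper isolates this as Lemma~\ref{lem1.4} and proves that $\chi_C*\chi_{-D}$ is not identically zero via analyticity of the Fourier transforms of characteristic functions of bounded sets, whereas you integrate $\mathcal{L}(E_M\cap(E_m+a))$ over $a$ and invoke Tonelli to get $|E_m|\cdot|E_M|>0$. Your Steinhaus-type argument is more elementary and avoids the preliminary reduction to bounded sets. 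You also treat the degenerate cases $\essinf_\mu(g)=0$ and $\esssup_\mu(g)=\infty$ by choosing level sets directly, while the paper restricts $\mu$ to a subset where $g$ is bounded (noting that $\nu$ remains a frame measure for the restriction with the same bounds) and reapplies the bounded case; both routes work, though your bookkeeping in that step is slightly loose (the lower bound you actually obtain is $K$ divided by a fixed constant, which still diverges as $K\to\infty$, so the conclusion stands).
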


\begin{proof}
Let $M:=\esssup(g)$ and $m:=\essinf(g)$ and assume for the moment that $m>0$ and $M<\infty$. Take $\epsilon>0$ arbitrary. Then there exist a set of positive Lebesgue measure $C$ such that $m\leq g(x)\leq m+\epsilon$ for $x\in C$, and a set of positive Lebesgue measure $D$ such that $M-\epsilon\leq g(x)\leq M$ for all $x\in D$.

We need a lemma:

\begin{lemma}\label{lem1.4}
Let $C$ and $D$ be two sets of positive Lebesgue measure in $\br^d$. Then there exists a subset $E$ of $C$, of positive Lebesgue measure and some $a\in\br^d$ such that $E+a\subset D$.
\end{lemma}
\begin{proof}
Taking subsets we can assume $C$ and $D$ are bounded. Consider the convolution $\chi_C*\chi_{-D}$. We have
$$\chi_C\ast\chi_{-D}(t)=\int \chi_C(x)\chi_{-D}(t-x)\,dx=\int\chi_C(x)\chi_{D+t}(x)\,dx={\mathcal L}(C\cap(D+t)).$$
We claim that $\chi_C*\chi_{-D}$ cannot be identically zero. Taking the Fourier transform we have $\widehat{\chi_C*\chi_{-D}}=\widehat{\chi_C}\cdot\widehat{\chi_{-D}}$. Both functions are analytic in each variable and not identically zero. Hence their product cannot be identically zero. Therefore $\chi_C*\chi_{-D}(a)\neq 0$ for some $a\in\br^d$. So ${\mathcal L}(C\cap (D+a))>0$. Let $E:=C\cap (D+a)\subset C$. Then
$E-a\subset D$, and this proves the lemma.
\end{proof}

Returning to the proof of the theorem, using Lemma \ref{lem1.4} we find a set $E$ of positive Lebesgue measure and some $a\in\br^d$ such that $m\leq g(x)\leq m+\epsilon$ and $M-\epsilon\leq g(x+a)\leq M$ for all $x\in E$.

But the the measure $T_a(\mu|_{E+a})=g(x+a)\,dx|_E$ so
$$\left\|\frac{dT_a(\mu|_{E+a})}{d\mu}\right\|_\infty=\left\|\frac{g(x+a)}{g(x)}|_E\right\|_\infty\geq \frac{M-\epsilon}{m+\epsilon}.$$
Letting $\epsilon\rightarrow 0$ and using Theorem \ref{th1.2} we obtain the result.

Assume now $M=\infty$. Then for any $N$ we can find a subset $C$ of positive Lebesgue measure such that $N\leq \esssup(g|_C)<\infty$ and $0<\essinf(g|_C)\leq P$ for some fixed $P$.
Take the restriction $\mu|_C$ of the measure $\mu$ to $C$. Then it is clear that $\nu$ is also a frame measure for $\mu|_C$ with the same frame bounds. Then we can apply the previous argument to conclude that $B/A\geq N/P$. Letting $N\rightarrow\infty$ we obtain a contradiction. A similar argument shows that $\essinf(g)>0$.
\end{proof}

\medskip

We now give some corollaries of Theorem \ref{th1.3}. The first one is the case when $A=B$.
\begin{corollary}\label{cor1.5}
In the hypotheses of Theorem \ref{th1.3}. suppose $\mu=g\,dx$ admits a tight frame measure (or Plancherel measure), then $g$ is a constant multiple of a characteristic function.

In other words, if $g$ is not a constant multiple of a characteristic function then the measure $\mu=g\,dx$ does not admit tight frame measures; in particular it does not admit tight frames of weighted exponential functions $\{w_\lambda e_\lambda : \lambda\in\Lambda\}$, where $w_\lambda\in\bc$ for all $\lambda\in\Lambda$.
\end{corollary}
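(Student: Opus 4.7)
The proof should be a direct specialization of Theorem \ref{th1.3} to the case $A=B$. The plan is to observe that a tight frame measure forces equality, through the chain of inequalities, of the essential supremum and essential infimum of the density $g$, and then to deduce that $g$ must be constant on its support.

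First I would apply Theorem \ref{th1.3} with $A=B$. The inequality
$$1=\frac{B}{A}\geq\frac{\esssup_\mu(g)}{\essinf_\mu(g)}$$
immediately yields $\esssup_\mu(g)\leq\essinf_\mu(g)$. Since the reverse inequality always holds (on the set $\{g>0\}$, which carries all of $\mu$, the essential infimum cannot exceed the essential supremum), we conclude $\esssup_\mu(g)=\essinf_\mu(g)=:c$. In particular, by the ``in particular'' clause of Theorem \ref{th1.3}, both quantities are finite and strictly positive (otherwise no frame measure, tight or not, can exist). Hence $g=c$ holds $\mu$-almost everywhere, which means $g=c\,\chi_{\{g>0\}}$ Lebesgue-a.e. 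Setting $\Omega=\{g>0\}$ (the measure-theoretic support of $\mu$), we get $g=c\,\chi_\Omega$, i.e., $g$ is a constant multiple of the characteristic function of its support.

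For the final statement about weighted exponentials, the plan is to translate such a tight frame into a tight frame measure and invoke the argument just given. If $\{w_\lambda e_\lambda:\lambda\in\Lambda\}$ is a tight frame in $L^2(\mu)$ with bound $A$, then for every $f\in L^2(\mu)$,
$$A\|f\|_{L^2(\mu)}^2=\sum_{\lambda\in\Lambda}|\langle f,w_\lambda e_\lambda\rangle_{L^2(\mu)}|^2=\sum_{\lambda\in\Lambda}|w_\lambda|^2\,|\widehat{f\,d\mu}(\lambda)|^2=\|\widehat{f\,d\mu}\|_{L^2(\nu)}^2,$$
where $\nu:=\sum_{\lambda\in\Lambda}|w_\lambda|^2\delta_\lambda$. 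Thus $\nu$ is a tight frame measure for $\mu$, and the previous paragraph applies, giving $g=c\,\chi_\Omega$.

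There is no real obstacle: the content is entirely contained in Theorem \ref{th1.3}. The only points deserving care are making sure the essential infimum argument is carried out with respect to $\mu$ itself (so that the null set $\{g=0\}$ is harmless) and the bookkeeping that converts a tight frame of weighted exponentials into the discrete tight frame measure $\sum|w_\lambda|^2\delta_\lambda$.
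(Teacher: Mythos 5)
Your proposal is correct and follows exactly the paper's argument: specialize Theorem \ref{th1.3} to $A=B$ to force $\esssup_\mu(g)=\essinf_\mu(g)$, conclude $g$ is a constant multiple of a characteristic function, and handle weighted exponentials via the discrete measure $\nu=\sum_{\lambda\in\Lambda}|w_\lambda|^2\delta_\lambda$. Your write-up merely fills in the (routine) details that the paper leaves implicit.
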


\begin{proof}
From Theorem \ref{th1.3}, we see that if $A=B$ then $\esssup_\mu(g)=\essinf_\mu(g)$ which means that $g$ is a characteristic function. The second statement follows by noting that weighted frames of exponentials correspond to discrete frame measures $\nu=\sum_{\lambda\in\Lambda}|w_\lambda|^2\delta_\lambda$, where $\delta_\lambda$ is the Dirac measure at $\lambda$.
\end{proof}

\medskip

If we replace the Lebesgue measure by general Hausdorff measure, we were not able to prove whether Theorem \ref{th1.3} will hold since Lemma \ref{lem1.4} cannot be generalized to Hausdorff measures; we have the following example.

\begin{example} Let $C$  be the set of
numbers in $[0,1]$ that can be represented in base 10 using digits
$\{0,1\}$ and $D$  be the same as $C$ except the digits are $\{0,2\}$. Then there is no $E\subset C$ with positive Hausdorff dimension (so none of its Hausdorff measures will be positive) such that $E+a\subset D$ for some $a\in{\Bbb R}$.
\end{example}

\begin{proof}
It is easy to see that $C-C$ is the set of numbers in $[-1/2,1/2]$ that have a base 10 representation with digits in $\{-1,0,1\}$, while $D-D$ is the set of numbers in $[-1/2,1/2]$ that have a base 10 representation with digits in $\{-2,0,2\}$. Hence, $(C-C)\cap(D-D)=\{0\}$.

Suppose there exists $E\subset C$ with positive Hausdorff dimension such that $E+a\subset D$ for some $a\in{\Bbb R}$. Then $E-E\subset C-C$, and $(E+a)-(E+a)\subset D-D$. But $E-E = (E+a)-(E+a)$, this implies that $E-E\subset (C-C)\cap (D-D)$. Hence, $E-E = \{0\}$. This means that $E$ has at most one point, so it has zero Hausdorff dimension. This is a contradiction.
\end{proof}

However, as Theorem \ref{th1.2} holds for general measures, we still have the following corollary.

\begin{corollary}\label{cor1.6}
Let $\H^s$ be the Hausdorff measure of dimension $s>0$ on $\br^d$. Let $d\mu=g(x)\,d\H^s(x)$ where $g$ is some non-negative Borel measurable function whose support $\Omega$ is a compact set with $0<\H^s(\Omega)<\infty$. Suppose there exists a Borel set $E$ and some $a\in\br^d$ such that $E,E+a\subset \Omega$ and such that there exist constants $0<m,M<\infty$ with
$$g(x)\leq m\mbox{ for all }x\in E\mbox{ and }g(x)\geq M\mbox{ for all }x\in E+a.$$
Then for any frame measure $\nu$ for $\mu$, its frame bounds $A,B$ satisfy the inequality
$$\frac BA\geq \frac Mm.$$
\end{corollary}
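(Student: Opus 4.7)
The plan is to apply Theorem \ref{th1.2} with (a subset of) the given set $E$ and the given translation vector $a$. The two things I need to pin down are: (a) an explicit formula for the translated restricted measure $T_a(\mu|_{E+a})$, and (b) a choice of $F \subset E$ that makes the absolute continuity assumption $T_a(\mu|_{F+a}) \ll \mu$ hold so that the Radon--Nikodym derivative is meaningful.

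For (a), I would exploit the translation invariance of Hausdorff measure. For any bounded Borel $\varphi$,
\[
\int \varphi\,dT_a(\mu|_{E+a}) = \int_{E+a}\varphi(x-a)\,g(x)\,d\H^s(x) = \int_{E}\varphi(y)\,g(y+a)\,d\H^s(y),
\]
so $T_a(\mu|_{E+a}) = g(y+a)\chi_E(y)\,d\H^s(y)$. For (b), the issue is that $T_a(\mu|_{E+a})$ can put mass on points of $E$ where $g=0$, while $\mu$ sees those points as null. I would therefore take $F := E \cap \{g>0\}$. Then $F, F+a \subset \Omega$, the bounds $g\leq m$ on $F$ and $g\geq M$ on $F+a$ still hold (since $F \subset E$ and $F+a\subset E+a$), and on $F$ one has the well-defined density
\[
h(y) := \frac{dT_a(\mu|_{F+a})}{d\mu}(y) = \frac{g(y+a)}{g(y)}\,\chi_F(y).
\]

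Assuming $\H^s(F)>0$ (the only case in which the hypothesis conveys non-vacuous information about $\mu$), we have $\mu(F)>0$, and the hypotheses $g(y)\leq m$ on $F$ and $g(y+a)\geq M$ on $F+a$ yield $h(y)\geq M/m$ pointwise on $F$; hence $\|h\|_\infty \geq M/m$ where the essential supremum is taken with respect to $\mu$. Theorem \ref{th1.2} then delivers
\[
\frac{B}{A} \geq \left\|\frac{dT_a(\mu|_{F+a})}{d\mu}\right\|_\infty \geq \frac{M}{m}.
\]

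The main subtlety is the one flagged in (b): the corollary is stated with pointwise rather than essential inequalities, so if $g$ vanishes on a substantial portion of $E$ the translate is not absolutely continuous with respect to $\mu$ on all of $E$. Restricting to $\{g>0\}$ circumvents this, at the cost of verifying that the restricted set is not $\H^s$-null; otherwise $\mu(E)=0$ and the hypothesis furnishes no usable test function. Apart from that technicality, everything reduces to translation invariance of $\H^s$ and a direct application of Theorem \ref{th1.2}.
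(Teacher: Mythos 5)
Your proof is correct and follows essentially the same route as the paper's: translation invariance of $\H^s$ identifies the Radon--Nikodym derivative as $g(\cdot+a)/g(\cdot)\geq M/m$, and Theorem \ref{th1.2} finishes. In fact you are more careful than the paper, which applies the theorem directly on $E$ without noting that $T_a(\mu|_{E+a})$ may fail to be absolutely continuous with respect to $\mu$ where $g$ vanishes on $E$, and without making explicit the (implicitly needed) hypothesis $\mu(E)>0$; your restriction to $F=E\cap\{g>0\}$ repairs both points.
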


\begin{proof}
Since $\H^s$ is translation invariant, we have for $x\in E$:
$$\frac{dT_a(\mu|_{E+a})}{d\mu}(x)=\frac{g(x+a)}{g(x)}\geq \frac Mm.$$
The conclusion follows from Theorem \ref{th1.2}.
\end{proof}

\medskip

\section{Affine iterated function systems}

\begin{definition}\label{def4.1}
Let $R$ be a real $d\times d$ expansive matrix, i.e., all its eigenvalues $\lambda$ have absolute value $|\lambda|>1$. Let $\B = \{b_1,\cdots,b_N\}$ be a finite subset of $\br^d$ and let $(p_{b_i})_{i=1}^{N}$ be a set of positive probability weights, $p_{b_i}>0$ and $\sum_{i=1}^{N}p_{b_i}=1$. We define the {\it affine iterated function system}(IFS)
$$\tau_{b_i}(x):=R^{-1}(x+b_i),\quad (x\in \br^d, i = 1,\cdots N).$$
According to Hutchinson \cite{Hut81}, there exists a unique compact set $X_{\B}$ called the {\it attractor }  that has the invariance property
$$X_{\B}=\bigcup_{i=1}^{N}\tau_{b_i}(X_{\B}).$$
Moreover, in this case
\begin{equation}\label{eq4.1.1-}
X_{\B}=\left\{\sum_{n=1}^\infty R^{-n}b_n : b_n\in \B\mbox{ for all $n\in\bn$}\right\}.
\end{equation}
Also, there is a unique Borel probability measure $\mu_{\B}$ on $\br^d$ called the {\it invariant measure}, such that
\begin{equation}
\mu_{\B}(E)=\sum_{i=1}^{N}p_{b_i}\mu_{\B}(\tau_{b_i}^{-1}(E)),\quad \mbox{ for all Borel sets $E$}.
\label{eq4.1.1}
\end{equation}
In addition, the measure $\mu_{\B}$ is supported on the attractor $X_{\B}$. In this paper, we will write $X = X_{\B}$ and $\mu = \mu_{\B}$ when there is no confusion. Moreover, we will call the attractor and the invariant measure a {\it self-similar set} and a {\it self-similar measure} respectively if $R^{-1} =\lambda O$ for some $0<\lambda<1$ and orthogonal matrix $O$.

\medskip

If for all $i\neq j$, $i,j\in\{1,\cdots, N\}$, we have $\mu(\tau_{b_i}(X)\cap \tau_{b_j}(X))=0$ then we say that the affine IFS has {\it no overlap}.

\end{definition}

It is convenient to introduce some multiindex notation for a given affine IFS: let $\Sigma = \{1,\cdots, N\}$, $\Sigma^n= \underbrace{\Sigma\times\cdots\times\Sigma}_n$ and $\Sigma^{\ast} = \bigcup_{n=1}^{\infty}\Sigma^n$, the set of all finite words. Given $I= i_1\cdots i_n\in\Sigma^n$, $\tau_I(x) = \tau_{{b_{i_1}}}\circ\cdots \circ \tau_{{b_{i_n}}}(x)$, $p_I = p_{b_{i_1}}\cdots p_{b_{i_n}}$ and $X_I = \tau_I(X)$. By iterating the invariant identity of $X$, it is easy to see that
\begin{equation}\label{eq4.1.1+}
X = \bigcup_{I\in\Sigma^n}X_I.
\end{equation}
Finally, we write $I^n = \underbrace{I\cdots I}_n$ where $IJ$ denotes concatenation of the words $I$ and $J$. In this case, $\tau_{I^n}(x) = \tau_{I}\circ\cdots\tau_{I}(x) = \tau_{I}^{n}(x)$.

\medskip

We recall that, for the self-similar IFS, the well known \textit{open set condition}(OSC) states that there exists open set $U$ such that
$$\bigcup_{i=1}^{N}\tau_{b_i}(U)\subset U \ \mbox{and} \ \tau_{b_i}(U)\cap\tau_{b_j}(U) = \emptyset \ \mbox{for all} \ i\neq j.
$$
This condition is fundamental in fractal geometry. Before going to the main theorem in this section, we first clarify the relation between OSC and no overlap condition using theorems in \cite{S94} and \cite{LW93}.

\begin{proposition}\label{prop3.2}
If $\mu = \mu_{\B}$ is a self-similar measure, then the open set condition implies the no overlap condition of the measures $\mu$.
\end{proposition}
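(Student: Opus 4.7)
The plan is to promote OSC to the strong open set condition via Schief \cite{S94}, to show that $\mu$ concentrates on the SOSC open set $U$ via a shift-ergodicity argument on the coding space, to localize the overlap $E_{ij}=\tau_{b_i}(X)\cap\tau_{b_j}(X)$ in $\tau_{b_i}(X\setminus U)\cup\tau_{b_j}(X\setminus U)$, and finally to kill these forward translates using \cite{LW93}. For the first step, Schief's theorem on $\mathbb R^d$ yields an open set $U$ with $\tau_{b_i}(U)\subset U$, $\tau_{b_i}(U)\cap\tau_{b_j}(U)=\emptyset$ for $i\neq j$, and $U\cap X\neq\emptyset$. Replacing $U$ by $\bigcup_{I\in\Sigma^\ast}\tau_I(U)$ preserves disjointness and invariance (words beginning with distinct letters $i\neq j$ send $U$ inside the disjoint sets $\tau_{b_i}(U),\tau_{b_j}(U)$) and adds the property $X\subset\overline U$, because the orbit $\{\tau_I(x_0)\}_I$ of any $x_0\in U\cap X$ is dense in $X$.

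For the second step I would show $\mu(X\setminus U)=0$. Let $\pi\colon\Sigma^{\mathbb N}\to X$ be the coding $\pi(\omega)=\lim_n\tau_{\omega_1\cdots\omega_n}(x_0)$ and let $\nu$ be the Bernoulli product measure with weights $(p_{b_i})$, so that $\pi_\ast\nu=\mu$. Since $\pi^{-1}(U\cap X)$ is non-empty and open in $\Sigma^{\mathbb N}$, it contains a cylinder $C$ with $\nu(C)>0$. Because $(\Sigma^{\mathbb N},\sigma,\nu)$ is Bernoulli and hence ergodic, for $\nu$-a.e.\ $\omega$ there is $m\ge 0$ with $\sigma^m\omega\in C$; then $\pi(\sigma^m\omega)\in U$ and $\pi(\omega)=\tau_{\omega_1\cdots\omega_m}(\pi(\sigma^m\omega))\in\tau_{\omega_1\cdots\omega_m}(U)\subset U$. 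Pushing forward by $\pi$, $\mu(U\cap X)=1$.

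For the third step, any $x\in E_{ij}\cap U$ decomposes as $x=\tau_{b_i}(y)=\tau_{b_j}(z)$ with $y,z\in X\subset\overline U$; at least one of $y,z$ must lie in $\partial U\cap X$, otherwise $x\in\tau_{b_i}(U)\cap\tau_{b_j}(U)=\emptyset$. Combined with $\partial U\cap X\subset X\setminus U$ and $\mu(X\setminus U)=0$, this yields
$$\mu(E_{ij})\le\mu(\tau_{b_i}(X\setminus U))+\mu(\tau_{b_j}(X\setminus U)).$$

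The hard part will be the final step, $\mu(\tau_{b_i}(X\setminus U))=0$, since the self-similar measure is not translation invariant. Expanding via invariance, $\mu(\tau_{b_i}(X\setminus U))=\sum_k p_k\mu(\tau_{b_k}^{-1}\tau_{b_i}(X\setminus U))$; the $k=i$ term equals $p_i\mu(X\setminus U)=0$, and for $k\neq i$ the set $\tau_{b_k}^{-1}\tau_{b_i}(X\setminus U)\cap X$ is contained in $\tau_{b_k}^{-1}(E_{ik})$, producing only the self-referential bound $a_{ij}\le\sum_{k\neq i}a_{ik}+\sum_{k\neq j}a_{jk}$ with $a_{ij}=\mu(E_{ij})$. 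To break the circularity I would iterate this decomposition to arbitrary depth $n$ using $\mu=\sum_{|I|=n}p_I\mu\circ\tau_I^{-1}$ together with the inclusion $\tau_I(X)\cap\tau_J(X)\subset\tau_K(E_{i_k j_k})$, where $K$ is the common prefix and $k$ the first differing letter of $I\neq J$, and extract geometric decay from the contractivity of $\tau_K$; alternatively, one invokes the boundary estimate of \cite{LW93} directly to conclude $\mu(\tau_{b_i}(X\setminus U))=0$.
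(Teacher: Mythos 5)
Your Steps 1--3 are sound and run parallel to the paper's argument: the paper likewise invokes Schief \cite{S94} for an open set $U$ meeting $X$, deduces $X\subset\overline U$, proves $\mu(U)>0$ by locating a small cell $X_I$ inside a ball contained in $U$, and then cites Theorem 2.3 of \cite{LW93} for $\mu(U)=1$; your ergodicity argument on the coding space is a legitimate self-contained substitute for that citation. The gap is in your final step. The self-referential inequality $a_{ij}\le\sum_{k\neq i}a_{ik}+\sum_{k\neq j}a_{jk}$ indeed does not close (for $N=2$ it reads $a_{12}\le 2a_{12}$), and iterating to depth $n$ does not rescue it: contractivity of $\tau_K$ makes the \emph{diameters} of the pieces decay geometrically, not their $\mu$-measures, so there is no "geometric decay" to extract. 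Your fallback --- invoking the boundary estimate of \cite{LW93} --- is exactly what the paper does (its Corollary 2.5 gives $\mu(\partial\tau_b(U))=0$, whence $\mu(\tau_{b_i}(X)\cap\tau_{b_j}(X))\le\mu\bigl(\overline{\tau_{b_i}(U)}\cap\overline{\tau_{b_j}(U)}\bigr)=\mu(\tau_{b_i}(U)\cap\tau_{b_j}(U))=0$), but that citation is carrying the hard part; as written, your primary route does not establish it.

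The irony is that you already have everything needed for a clean finish that avoids the boundary estimate altogether. From $\mu(U)=1$ and the invariance identity, $\mu(\tau_{b_i}(U))=\sum_k p_k\,\mu(\tau_{b_k}^{-1}\tau_{b_i}(U))\ge p_i\,\mu(U)=p_i$; since the sets $\tau_{b_i}(U)$ are pairwise disjoint and contained in $U$, $\sum_i\mu(\tau_{b_i}(U))\le\mu(U)=1=\sum_i p_i$, forcing $\mu\bigl(\bigcup_i\tau_{b_i}(U)\bigr)=1$. On the other hand $E_{ij}\subset\tau_{b_i}(\overline U)\cap\tau_{b_j}(\overline U)=\overline{\tau_{b_i}(U)}\cap\overline{\tau_{b_j}(U)}$ is disjoint from every $\tau_{b_k}(U)$ (an open set disjoint from another open set is disjoint from its closure), so $\mu(E_{ij})\le\mu\bigl(X\setminus\bigcup_k\tau_{b_k}(U)\bigr)=0$. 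I would replace your fourth step with this computation.
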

\begin{proof}
By \cite{S94}, we can choose an open set $U$ such that $U\cap X\neq\emptyset$. Pick $x\in U\cap X$, then there exists a ball of radius $\epsilon$ and centered at $x$, denoted by $B_{\epsilon}(x)$, is a subset of $U$. On the other hand, from (\ref{eq4.1.1+}) we have for all $n>0$ there exists some $I \in \Sigma^n$ such that  $x\in X_I$ (since $x\in X$). As the diameter of $X_I$ is tending to $0$ as $n$ tends to infinity, it shows that for $n$ large, $X_I\subset B_{\epsilon}(x)\subset U$. Writing $I = i_1\cdots i_n$, by iterating the invariance equation (\ref{eq4.1.1}),
$$
\mu(U)\geq \mu(X_I) \geq p_{b_1}\mu(X_{i_2\cdots i_n})\geq\cdots\geq p_{b_{i_1}}\cdots p_{b_{i_n}}>0.
$$
 We can then use Theorem 2.3 in \cite{LW93} to conclude that $\mu(U) =1$.  Writing also $U_b : = \tau_b(U)$ with $b\in\B$,  by Corollary 2.5 in \cite{LW93}, $\mu(\partial U_{b})=0$, where $\partial U_{b}$ denotes the boundary of $U_{b}$. As $X\subset \overline{U}$, the closure of $U$, we have $\tau_{b}(X)\subset \overline{U_b}$ and hence by $U_{b_i}\cap U_{b_j}=\emptyset$ from the OSC,
$$
\mu(\tau_{b_i}(X)\cap \tau_{b_j}(X))\leq\mu(\overline{U_{b_i}}\cap\overline{U_{b_j}}) = \mu_B(U_{b_i}\cap U_{b_j}) =0.
$$
\end{proof}

\begin{remark}
It is not known whether the no overlap condition implies the OSC. We know that the \textit{post-critically finite}(p.c.f.) fractals (the intersections consist only of finite points) introduced by Kigami \cite{Ki} satisfy the no overlap condition. However, except for some partial results in \cite{BR07} and \cite{DL08}, it is still an open question whether all p.c.f. fractals have the OSC.

Much less is known for affine iterated function system. We just know that if the OSC is satisfied, we can also choose $U$ to be an open set with non-empty intersection with the invariant set \cite{HeL08}. However, we do not know whether Proposition \ref{prop3.2} holds in affine IFS.
\end{remark}
\medskip

We can now prove the main theorem in this section using Theorem \ref{th1.2}.

\begin{theorem}\label{th4.2}
Let $(\tau_b)_{b\in \B}, (p_b)_{b\in \B}$ be an affine iterated function system with no overlap as in Definition \ref{def4.1}. Suppose the invariant measure $\mu$ admits a frame measure. Then all the probabilities $p_b$, $b\in \B$ must be equal.
\end{theorem}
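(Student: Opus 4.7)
The plan is to exploit the self-similar decomposition of $\mu$ along with the no overlap condition, find exact translational relations between pieces $X_I$ and $X_J$ of the same scale, and then apply Theorem \ref{th1.2} to the ratio of weights $p_J/p_I$. By pushing these words to have arbitrarily lopsided weight ratios, we will force all the probabilities to coincide.

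First I would write down the consequence of no overlap at scale $n$. Iterating (\ref{eq4.1.1}) and using $\mu(X_I\cap X_J)=0$ for distinct $I,J\in\Sigma^n$, one has the clean identity $\mu|_{X_I}=p_I\,\mu\circ\tau_I^{-1}$ for every $I\in\Sigma^n$. The essential point is that each $\tau_I$ is an \emph{affine} map of the form $\tau_I(x)=R^{-n}x+v_I$ with $v_I=\sum_{k=1}^n R^{-k}b_{i_k}$, so for two words $I,J\in\Sigma^n$ the composition $\tau_J\circ\tau_I^{-1}$ is pure translation by $a:=v_J-v_I$. Hence $X_J=X_I+a$ and a direct change of variables gives $\mu|_{X_I}(E)=\frac{p_I}{p_J}\mu|_{X_J}(E+a)$, which rearranges to
\begin{equation*}
T_a(\mu|_{X_J})=\frac{p_J}{p_I}\,\mu|_{X_I}.
\end{equation*}
In particular $T_a(\mu|_{X_J})\ll\mu$ with Radon--Nikodym derivative $\frac{p_J}{p_I}\chi_{X_I}$, whose $\mu$--essential supremum equals $p_J/p_I$ (since $\mu(X_I)=p_I>0$). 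Theorem \ref{th1.2} (applied with $F=X_J-a=X_I$) then yields
\begin{equation*}
\frac{B}{A}\ \geq\ \frac{p_J}{p_I}\qquad\text{for every $n\geq 1$ and every $I,J\in\Sigma^n$.}
\end{equation*}

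To finish, suppose toward contradiction that $p_i\neq p_j$ for some $i,j\in\Sigma$; without loss of generality $p_j>p_i$. Apply the displayed inequality with $I=i^n$ and $J=j^n$, so that $p_I=p_i^n$ and $p_J=p_j^n$, giving $B/A\geq (p_j/p_i)^n\to\infty$ as $n\to\infty$, contradicting the finiteness of the frame bounds. Hence all $p_b$ are equal.

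The only genuinely delicate step is the middle one: verifying that the ``two pieces differ by a translation'' construction really produces an honest hypothesis for Theorem \ref{th1.2}. This is where the no overlap assumption is indispensable, because without it the identity $\mu|_{X_I}=p_I\mu\circ\tau_I^{-1}$ would have to be replaced by an inequality and the clean proportionality $T_a(\mu|_{X_J})=(p_J/p_I)\mu|_{X_I}$ would only hold as a lower bound, which would still suffice to bound $\|dT_a(\mu|_{X_J})/d\mu\|_\infty$ from below but requires more care. (Note also that $v_I\neq v_J$ when $I=i^n,J=j^n,i\neq j$, since $\sum_{k=1}^n R^{-k}$ is invertible as $R$ is expansive, so the translation $a$ is genuinely nonzero; this is however not logically needed, as the conclusion of Theorem \ref{th1.2} holds for $a=0$ as well.)
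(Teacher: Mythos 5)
Your proof is correct and takes essentially the same route as the paper: the paper's Lemma \ref{lem4.3} establishes exactly your key identity $T_a(\mu|_{X_J})=(p_J/p_I)\,\mu|_{X_I}$ for the constant words $I=b^n$, $J=c^n$ (with the same translation $a=v_J-v_I$), and then feeds the resulting ratio $p_c^n/p_b^n$ into Theorem \ref{th1.2} and lets $n\to\infty$, just as you do. The only immaterial difference is that you state the translation relation for arbitrary pairs of words of the same length before specializing to $i^n,j^n$.
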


\begin{proof}
The result will follow if we can check the absolute continuity assumption in Theorem \ref{th1.2}; this is given in the next lemma:
\begin{lemma}\label{lem4.3}
Pick two elements $b\neq c$ in $\B$ and let $n\in \bn$. Define $b^{(n)}:=b+Rb+\dots+R^{n-1}b$ and similarly for $c^{(n)}$. Let $a:=R^{-n}(c^{(n)}-b^{(n)})$ and  $F = \tau_b^n(X) $ (i.e. $\tau_b\circ\cdots\circ\tau_b(X)$ for $n$ compositions).

Consider the measure $T_a(\mu|_{F+a})$ with the notation as in Definition \ref{def1.1}. Then this measure is supported on $F$, it is absolutely continuous with respect to $\mu$ and the Radon-Nikodym derivative is constant on $F$:
$$\frac{dT_a(\mu|_{F+a})}{d\mu}=\frac{p_c^n}{p_b^n}.$$
\end{lemma}

\begin{proof}
It is easy to see that $\tau_b^n(x)=R^{-n}x+R^{-n}b^{(n)}$ and therefore $\tau_b^n(x)+a=\tau_c^n(x)$ for any $x\in\br^d$.
This implies that $F+a=\tau_c^n(X)$, so the measure $T_a(\mu|_{F+a})$ is supported on $\tau_b^n(X)$.
Also, we have $\tau_b^{-n}(x)=R^nx-b^{(n)}$.

For any $b$ in $\B$, we consider a arbitrary Borel set $E$ of $\tau_b^n(X)$. We note that $\tau_b^n(X)\subset \tau_b(X)$. By the fact that $\mu \circ\tau_{b'}^{-1}$ is supported on $\tau_{b'}(X)$, the no overlap condition and the invariance identity (\ref{eq4.1.1}), we get that for all $b'\neq b$,
 $$\mu(\tau_{b'}^{-1}(E))\leq \mu \circ\tau_{b'}^{-1}(\tau_b(X)\cap\tau_{b'}(X))\leq p_{b'}^{-1}\mu(\tau_b(X)\cap\tau_{b'}(X)) =0$$
  and hence for any $b$ in $B$
$$
\mu(E) = \sum_{b'\in \B}p_{b'}\mu (\tau_{b'}^{-1}(E))=p_b\mu (\tau_b^{-1}(E))=\cdots=p_b^n\mu(\tau_b^{-n}(E)).
$$

\medskip

Now for a Borel subset $E$ of $F$, we have that $E+a$ is contained in $F+a =\tau_c^n(X)$ and thus
$$T_a(\mu|_{F+a})(E)=\mu(E+a)=p_c^n\mu(\tau_c^{-n}(E+a))=
p_c^n\mu(R^n(E+a)-c^{(n)})=$$$$p_c^n\mu(R^nE-b^{(n)})=p_c^n\mu(\tau_b^{-n}(E))=
\frac{p_c^n}{p_b^n}\mu(E).$$
This establishes the absolute continuity and also that the density is exactly $p_{c}^{n}/p_b^{n}$.
\end{proof}

Returning to the proof of the theorem,  if we have a frame measure with frame bounds $A$ and $B$, then by Theorem \ref{th1.2} and  Lemma \ref{lem4.3}, we have that
$$\frac{B}{A}\geq \frac{p_b^n}{p_c^n}\mbox{ for all $b,c\in \B$ and $n\in\bn$}.$$
This implies that all the probabilities $p_b$ have to be equal.

\end{proof}

\bigskip

 In the remainder of this section, we focus on affine iterated function systems that do not satisfy the no overlap condition. We will prove some general results on ${\Bbb R}^d$ and then apply them to special cases in the next section. From the proof of Theorem \ref{th4.2}, we need to explore the following two questions:
\begin{enumerate}
\item Given any Borel measures $\mu$, is the measure $T_a(\mu|_{F+a})$ absolutely continuous with respect to $\mu$ for Borel sets $F$ in the support of $\mu$ with positive measure in $\mu$?
\item If $\mu = \mu_{\B}$, how to estimate $\mu(\tau_I(X))$?
\end{enumerate}

\medskip

In answering these questions, we found the results in \cite{HLW} particularly useful, for the case of self-similar invariant measures. Recall that ${\mathcal H}^{\alpha}$ denotes the $\alpha$-Hausdorff measure. We collect their results in the following theorem.

\begin{theorem}\label{th4.1}\cite{HLW} Let $\mu = \mu_{\B}$ be the self-similar measure defined in Definition \ref{def4.1}. Let $R = \lambda^{-1}O$ for some $0<\lambda<1$ and orthogonal matrix $O$. Then

(i) If $\mu\ll{\mathcal H}^{\alpha}|_{X}$, then ${\mathcal H}^{\alpha}|_{X}\ll\mu$.

(ii) If $\mu\ll {\mathcal L}|_{X}$ and the Radon-Nikodym derivative has an essential upper bound, then $p_{b_j}\leq \lambda^d$ for all $j$.
\end{theorem}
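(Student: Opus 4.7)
The plan is to handle the two parts by rather different arguments, since they hinge on different features of the self-similar structure.

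For (ii), I would iterate the invariance identity \eqref{eq4.1.1}. Pick any index $j$. Keeping only the $j$-th term of the sum and choosing $E=\tau_{b_j}^n(X)$ yields $\mu(\tau_{b_j}^n(X))\geq p_{b_j}\,\mu(\tau_{b_j}^{n-1}(X))$, which iterated gives $\mu(\tau_{b_j}^n(X))\geq p_{b_j}^n$. Since $R^{-1}=\lambda O$ with $O$ orthogonal, the map $\tau_{b_j}^n$ is a similarity with contraction ratio $\lambda^n$, so $\mathcal{L}(\tau_{b_j}^n(X))=\lambda^{nd}\mathcal{L}(X)$. Combined with the essential upper bound $d\mu/d\mathcal{L}\leq M$, this forces $p_{b_j}^n\leq M\lambda^{nd}\mathcal{L}(X)$, where $\mathcal{L}(X)>0$ follows from $\mu(X)=1$ and $\mu\ll\mathcal{L}|_X$. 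Taking $n$-th roots and sending $n\to\infty$ yields $p_{b_j}\leq\lambda^d$.

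For (i), I would write $d\mu=g\,d\mathcal{H}^\alpha|_X$ with $g\geq 0$ Borel, and aim to show $g>0$ $\mathcal{H}^\alpha$-a.e. on $X$, which immediately gives $\mathcal{H}^\alpha|_X\ll\mu$. Since $\tau_{b_i}$ is a similarity of ratio $\lambda$, one has $\mathcal{H}^\alpha(\tau_{b_i}(A))=\lambda^\alpha\mathcal{H}^\alpha(A)$, and the invariance identity for $\mu$ translates, after a change of variables, into the $\mathcal{H}^\alpha$-a.e.\ functional equation
\[
g(x)=\lambda^{-\alpha}\sum_i p_{b_i}\,g(\tau_{b_i}^{-1}(x))\,\chi_{\tau_{b_i}(X)}(x).
\]
Iterating $n$ times produces, for every word $I\in\Sigma^n$, the pointwise lower bound $g(x)\geq\lambda^{-n\alpha}p_I\,g(\tau_I^{-1}(x))\chi_{\tau_I(X)}(x)$; equivalently, the zero set $S:=\{x\in X:g(x)=0\}$ satisfies $\tau_I^{-1}(S\cap\tau_I(X))\subset S$ modulo $\mathcal{H}^\alpha$-null sets, for every finite word $I$.

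The hardest step will be to deduce $\mathcal{H}^\alpha(S)=0$ from this self-similar structure of $S$. Here I would transfer the problem to the symbolic coding space $\Sigma^{\mathbb{N}}$ equipped with the Bernoulli measure $\prod_n p_{b_{i_n}}$: under the coding map, the shift-invariance translates the inclusion above into shift-invariance of the pullback of $S$, and ergodicity of the Bernoulli shift gives that this pullback has measure $0$ or $1$. To rule out measure $1$ (which would force $g=0$ a.e., contradicting $\mu(X)=1$) one combines this with a density-type theorem for $\mathcal{H}^\alpha$ on $X$, concluding $\mathcal{H}^\alpha(S)=0$. Some extra care is needed in the overlapping case, where the coding map is not injective; in practice this forces one to work away from the overlap set or to impose a mild separation condition, which is the main technical obstacle of the argument.
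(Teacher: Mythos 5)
First, a point of comparison: the paper does not prove Theorem \ref{th4.1} at all --- it is imported verbatim from \cite{HLW} (``We collect their results in the following theorem''), so there is no internal proof to measure yours against; what follows is an assessment of your argument on its own terms. Your proof of (ii) is correct and complete: dropping all but the $j$-th term in the iterated invariance identity (\ref{eq4.1.1}) gives $\mu(\tau_{b_j}^n(X))\geq p_{b_j}^n\mu(X)=p_{b_j}^n$; the similarity scaling gives $\mathcal{L}(\tau_{b_j}^n(X))=\lambda^{nd}\mathcal{L}(X)$ with $0<\mathcal{L}(X)<\infty$ (positivity because $\mu\ll\mathcal{L}|_X$ and $\mu(X)=1$, finiteness because $X$ is compact); and the essential bound $M$ on the density yields $p_{b_j}\leq\lambda^{d}\,(M\mathcal{L}(X))^{1/n}\to\lambda^{d}$. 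That half is a clean, self-contained elementary argument.

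Part (i), however, has a genuine gap at precisely the step you flag as the hardest, and the gap is structural rather than technical. The reduction to the zero set $S=\{x\in X: g(x)=0\}$ and the backward invariance $\tau_I^{-1}(S\cap\tau_I(X))\subset S$ modulo $\mathcal{H}^\alpha$-null sets are fine (similarities preserve $\mathcal{H}^\alpha$-null sets, so the a.e.\ exceptional set propagates harmlessly). But the ergodicity argument cannot close the proof as described: the Bernoulli measure $\mathbb{P}=\prod_n p_{b_{i_n}}$ on $\Sigma^{\mathbb{N}}$ pushes forward under the coding map $\pi$ to $\mu$, not to $\mathcal{H}^\alpha|_X$. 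Hence $\mathbb{P}(\pi^{-1}(S))=\mu(S)=\int_S g\,d\mathcal{H}^\alpha=0$ holds automatically, the zero--one law for the shift tells you nothing new, and in particular it gives no control whatsoever on $\mathcal{H}^\alpha(S)$, which is the quantity you must show vanishes. To make this strategy work you would need either an ergodic shift-invariant measure on the coding space equivalent to the pullback of $\mathcal{H}^\alpha|_X$ --- available when $\alpha$ is the similarity dimension and the open set condition holds, but not in the stated generality --- or a direct measure-theoretic argument on $X$ itself (a density-point theorem for $\mathcal{H}^\alpha$ restricted to a set of finite measure, combined with the forward invariance $\tau_I(X\setminus S)\subset X\setminus S$ and $\mu(X\setminus S)=1$). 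The ``density-type theorem'' you mention in passing is where the entire difficulty of (i) lives, and as written your argument does not establish $\mathcal{H}^\alpha(S)=0$; absent that, the sensible course is to cite \cite{HLW} for part (i), as the paper does.
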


\medskip

For the first question,  when the measure is self-similar,  the following is a simple sufficient condition.
\begin{proposition}\label{prop3.3}
Suppose $\mu =\mu_{\B}$ is self-similar and $0<{\mathcal H}^{\alpha}(X)<\infty$. If the measure $\mu\ll{\mathcal H}^{\alpha}|_{X}$, then for any Borel sets $F$ in the support of $\mu$ and for any $a$, $T_a(\mu|_{F+a})\ll\mu$.
\end{proposition}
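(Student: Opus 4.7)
The plan is to transfer the question from $\mu$ to the Hausdorff measure $\H^\alpha|_X$, which, unlike $\mu$, is translation invariant. By hypothesis $\mu \ll \H^\alpha|_X$, and Theorem \ref{th4.1}(i) provides the converse $\H^\alpha|_X \ll \mu$, so the two measures have the same null sets. This equivalence is what allows translation invariance of $\H^\alpha$ to carry over (in a weak, inequality form) to absolute-continuity statements about translates of $\mu$.

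Concretely, to establish $T_a(\mu|_{F+a}) \ll \mu$, I would take a Borel set $E$ with $\mu(E)=0$ and show
$$T_a(\mu|_{F+a})(E) \;=\; \mu\bigl((E+a)\cap(F+a)\bigr)\;=\;0.$$
Since $\mu$ is supported on $X$, I may replace $F$ by $F\cap X$ without loss of generality, and I may compute $\mu$ on the intersection with $X$. From $\mu(E)=0$ and the converse absolute continuity I get $\H^\alpha(E\cap X)=0$. Next, using $F\subset X$, I note the pointwise inclusion
$$(E+a)\cap(F+a)\cap X \;\subset\; \bigl((E\cap X)+a\bigr)\cap X,$$
so translation invariance of $\H^\alpha$ gives
$$\H^\alpha\bigl((E+a)\cap(F+a)\cap X\bigr)\;\leq\;\H^\alpha\bigl((E\cap X)+a\bigr)\;=\;\H^\alpha(E\cap X)\;=\;0.$$
The forward direction $\mu\ll\H^\alpha|_X$ then upgrades this to $\mu\bigl((E+a)\cap(F+a)\cap X\bigr)=0$, and since $\mu$ is concentrated on $X$ the intersection with $X$ is free, finishing the argument.

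The main input doing the real work is Theorem \ref{th4.1}(i), which supplies the nontrivial converse $\H^\alpha|_X \ll \mu$; without it, one could not recover $\H^\alpha(E\cap X)=0$ from $\mu(E)=0$, and the chain of implications would break. The rest is routine bookkeeping: translation invariance of $\H^\alpha$ and the support property of $\mu$. The one subtle point to handle carefully is that one must intersect with $X$ throughout, because $\H^\alpha(E\cap X)$ and $\H^\alpha(E)$ are not comparable in general, so the inclusion $F\subset X$ must be exploited to keep everything inside $X$.
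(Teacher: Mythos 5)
Your proof is correct and follows essentially the same route as the paper's: invoke Theorem \ref{th4.1}(i) to get $\mathcal{H}^\alpha|_X\ll\mu$, use $F\subset X$ and translation invariance of $\mathcal{H}^\alpha$ to kill the translated set, and conclude via the forward direction $\mu\ll\mathcal{H}^\alpha|_X$. The only cosmetic difference is that the paper reduces at the outset to $E\subset F$ (since $T_a(\mu|_{F+a})$ is supported on $F$) rather than carrying the intersections with $X$ through the computation.
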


\begin{proof}
By Theorem \ref{th4.1}(i), ${\mathcal H}^{\alpha}|_{X}\ll\mu$ also.  Hence, if $E\subset F$ is a Borel set such that $\mu(E)=0$, then ${\mathcal H}^{\alpha}|_{X}(E)=0$. But $F\subset X$, so ${\mathcal H}^{\alpha}(E)={\mathcal H}^{\alpha}|_{X}(E)=0$. As the Hausdorff measure is invariant under translations, ${\mathcal H}^{\alpha}|_{X}(E+a)\leq{\mathcal H}^{\alpha}(E+a)=0$. Hence, by $\mu\ll{\mathcal H}^{\alpha}|_{X}$,
$$
T_a(\mu|_{F+a})(E) = \mu(E+a\cap F+a)\leq \mu(E+a) =0.
$$
But $T_a(\mu|_{F+a})$ is supported on $F$, hence we have established the absolute continuity.
\end{proof}

\medskip

The investigation of the second question is more difficult when there is overlap. For a self-affine measure in (\ref{eq4.1.1}), we can iterate the invariance identity $n$ times and then identify the maps $\tau_I, \tau_J$ such that $\tau_I = \tau_J :=\tau$. Denote by ${\mathcal A}_n$ the set all equivalence classes under this identification, for the compositions of $n$ maps that coincide, and let $p_{\tau}$ be the sum of the weights in the equivalence class (i.e., for $\tau\in{\mathcal A}_n$, $p_{\tau} = \sum\{p_I: \tau_I = \tau\}$). We therefore have
\begin{equation}\label{eq4.1.1++}
\mu = \sum_{\tau\in{\mathcal A}_n}p_{\tau}\mu\circ \tau^{-1}.
\end{equation}

\medskip

Note that if there is no overlap, ${\mathcal A}_n = \{\tau_I: I\in\Sigma^n\}$ and $p_{\tau_I} = p_I$. In this case, $\mu(\tau_{I}^n(X)) =p_{I}^n$. To extend our results to IFSs with overlap, we introduce the following definition.

 \begin{definition}\label{def5.1}
 Consider the IFS as in Definition \ref{def4.1}. Given $\tau\in{\mathcal A}_n$, we define $x_{\tau}$ to be the fixed point of $\tau$ if $x_{\tau} = \tau(x_{\tau})$. We say that the IFS satisfies the {\it fixed point condition} if there exists $k>0$ and $\tau\in{\mathcal A}_{k}$ such that the fixed point
 $$
 x_{\tau}\not\in \widetilde{\tau}(X) \  \mbox{for all} \ \widetilde{\tau}\neq\tau, \ \widetilde{\tau}\in{\mathcal A}_k.
 $$
 \end{definition}

 \medskip

 The following proposition shows that fixed point condition gives a partial answer to the second question.

\begin{proposition}\label{Prop4.1}
Given an IFS and suppose that the fixed point condition is satisfied for some $k\in {\Bbb N}$ and $\tau\in{\mathcal A}_k$. Then there exists $n_0$ such that for all $n\geq n_0$,
$$
\mu(\tau^n(X)) =  Cp_{\tau}^n
$$
 where $C$ is independent of $n$.
\end{proposition}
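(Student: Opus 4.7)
The plan is to apply the level-$k$ invariance identity (\ref{eq4.1.1++}) to the set $E = \tau^n(X)$ and show that, for $n$ sufficiently large, every term on the right side except the one corresponding to $\tau' = \tau$ vanishes. This will produce the recurrence $\mu(\tau^n(X)) = p_\tau\, \mu(\tau^{n-1}(X))$ valid for all $n \geq n_0$, and iterating from $n_0$ immediately yields $\mu(\tau^n(X)) = C\, p_\tau^n$ with $C := p_\tau^{-n_0}\mu(\tau^{n_0}(X))$, which is independent of $n$.

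The geometric input comes from two observations. First, every element of ${\mathcal A}_k$ is an affine map of the form $x \mapsto R^{-k}x + c_{\tau'}$, and since $R$ is expansive, $R^{-k}$ has spectral radius strictly less than $1$, so $\|R^{-nk}\| \to 0$. Writing $\tau^n(x) - x_\tau = R^{-nk}(x - x_\tau)$, this shows $\diam(\tau^n(X)) \to 0$, so $\tau^n(X)$ shrinks to $\{x_\tau\}$. Second, the fixed point condition forces
$$
\delta := \min\{\dist(x_\tau, \widetilde\tau(X)) : \widetilde\tau \in {\mathcal A}_k,\ \widetilde\tau \neq \tau\} > 0,
$$
because ${\mathcal A}_k$ is finite and each $\widetilde\tau(X)$ is compact and does not contain $x_\tau$. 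Consequently, choosing $n_0$ so that $\tau^n(X) \subset B(x_\tau, \delta/2)$ for all $n \geq n_0$ ensures that $\tau^n(X) \cap \widetilde\tau(X) = \emptyset$ for all such $\widetilde\tau \neq \tau$ and all $n \geq n_0$.

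With these facts in hand, I would apply (\ref{eq4.1.1++}) to obtain
$$
\mu(\tau^n(X)) = \sum_{\tau' \in {\mathcal A}_k} p_{\tau'}\, \mu\bigl((\tau')^{-1}(\tau^n(X))\bigr).
$$
For $\tau' \neq \tau$, if $y \in X$ and $\tau'(y) \in \tau^n(X)$, then $\tau'(y) \in \tau'(X) \cap \tau^n(X) = \emptyset$, a contradiction; since $\mu$ is supported on $X$, these terms vanish for $n \geq n_0$. For $\tau' = \tau$, invertibility of $\tau$ as an affine bijection yields $\tau^{-1}(\tau^n(X)) = \tau^{n-1}(X)$, producing the promised recurrence. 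The main obstacle I anticipate is not any single algebraic manipulation but rather the shrinking step: one must genuinely use the expansiveness of $R$ (through $\rho(R^{-k}) < 1$) to force $\tau^n(X)$ inside an arbitrarily small ball about $x_\tau$, as otherwise the disjointness from the remaining $\widetilde\tau(X)$ could fail. Once that is established, the rest is bookkeeping with the level-$k$ invariance identity and the finiteness of ${\mathcal A}_k$.
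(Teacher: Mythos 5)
Your proposal is correct and follows essentially the same route as the paper: apply the level-$k$ invariance identity to $\tau^n(X)$, use the shrinking of $\tau^n(X)$ to $\{x_\tau\}$ together with the fixed point condition and compactness to kill all terms with $\tau'\neq\tau$ for $n\geq n_0$, and iterate the resulting recurrence $\mu(\tau^n(X))=p_\tau\mu(\tau^{n-1}(X))$. Your explicit $\delta$-separation argument is just a slightly more detailed version of the paper's appeal to compactness and vanishing diameter.
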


\begin{proof}  Writing $\tau = \tau_I$ for some $I =i_1\cdots i_k\in{\Sigma}_k$, $b_I= b_{i_k}+\cdots + R^{k-1}b_{i_1}$ and since $x_{\tau}= \tau_I(x_{\tau}) = \tau_I^n(x_{\tau})$ for all $n$, we have
$$
x_{\tau} = \sum_{n=1}^{\infty}R^{-kn}b_I.
$$
By (\ref{eq4.1.1-}), $x_{\tau}\in X$. Moreover,  $x_{\tau}= \tau^n(x_{\tau})\in\tau^n(X)$ for all $n\in{\Bbb N}$.  Since $\tau^n(X)$ and $\widetilde{\tau} (X)$, $\widetilde\tau\in \mathcal A_k$, are compact sets and the diameter of $\tau^n(X)$ tends to $0$, from the fixed point condition, there exists $n_0$ such that for all $n\geq n_0$, $\tau^n(X)\cap \widetilde{\tau}(X) = \emptyset$ for all $\widetilde{\tau}\neq \tau$ and $\widetilde{\tau}\in {\mathcal A}_k$.

 For all $n\geq n_0$, using the invariance identity (\ref{eq4.1.1++}),
$$
\mu(\tau^n(X)) = \sum_{\tau'\in{\mathcal A}_k}p_{\tau'}\mu (\tau'^{-1}(\tau^n(X))).
$$
From the above, we have $\mu (\tau'^{-1}(\tau^n(X))) =0$ if $\tau' \neq \tau$. Hence,
$$
\mu(\tau^n(X)) = p_{\tau}\mu (\tau^{-1}(\tau^n(X))) =  p_{\tau}\mu (\tau^{n-1}(X)).
$$
Inductively, $\mu(\tau^n(X)) = p_{\tau}^{n-n_0}\mu (\tau^{n_0}(X))= Cp_{\tau}^n $, where $C =p_{\tau}^{-n_0}\mu (\tau^{n_0}(X))$ is independent of $n$.
\end{proof}

\medskip

If we assume that the invariant measure is self-similar and is absolutely continuous with respect to the Lebesgue measure, we can use Theorem \ref{th0.1} and Proposition \ref{Prop4.1} to obtain the following:

\begin{theorem}\label{Prop4.2}
Let $\mu$ be a self-similar measure which is absolutely continuous with respect to the Lebesgue measure supported on $X$. If $\mu$ admits a frame measure then $p_{\tau} \leq \lambda^{dk}$ for all $\tau\in{\mathcal A}_k$. Suppose furthermore that the fixed point condition is satisfied for some $k\in {\Bbb N}$ and $\tau\in{\mathcal A}_{k}$, then, for these particular $k$ and $\tau$, $p_{\tau}= \lambda^{dk}$.

\end{theorem}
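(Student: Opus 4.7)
The plan is to derive both inequalities from the fact that, under the frame measure hypothesis, Theorem \ref{th1.3} forces the density $g = d\mu/dx$ to satisfy $0 < m := \essinf_\mu(g) \leq \esssup_\mu(g) =: M < \infty$; otherwise the ratio $B/A$ of frame bounds would be infinite. With $g$ sandwiched between two positive constants, $\mu$ is comparable to Lebesgue measure restricted to $X$, and this is precisely what is needed to feed the measure $\mu$ (viewed as the self-similar measure of various iterated systems) into Theorem \ref{th4.1}(ii).

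For the first assertion, I would iterate the invariance relation $k$ times and collapse compositions that represent the same affine map, so as to exhibit $\mu$ as the self-similar measure
$$\mu = \sum_{\tau \in \mathcal{A}_k} p_{\tau}\, \mu \circ \tau^{-1},$$
with distinct maps $\{\tau : \tau \in \mathcal{A}_k\}$ each having linear part $R^{-k} = \lambda^k O^{-k}$ (still a positive scalar times an orthogonal matrix, so the hypothesis of Theorem \ref{th4.1} is preserved, now at contraction ratio $\lambda^k$). Since $\mu$ has not changed, its density with respect to Lebesgue measure is the same $g$ and hence still essentially bounded by $M$. Applying Theorem \ref{th4.1}(ii) to this level-$k$ system yields $p_\tau \leq (\lambda^k)^d = \lambda^{dk}$ for every $\tau \in \mathcal{A}_k$.

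For the equality under the fixed point condition, Proposition \ref{Prop4.1} supplies the exact formula $\mu(\tau^n(X)) = C\, p_\tau^n$ for all $n \geq n_0$. On the other hand, $g \geq m$ almost everywhere on $X$ and $\tau^n(X) \subset X$, so
$$\mu(\tau^n(X)) = \int_{\tau^n(X)} g\, dx \geq m\, \mathcal{L}(\tau^n(X)) = m\, \lambda^{dkn}\, \mathcal{L}(X),$$
using that $\tau^n$ is affine with linear part $R^{-kn}$ and $|\det R^{-kn}| = \lambda^{dkn}$. Combining gives $C\, p_\tau^n \geq m\, \mathcal{L}(X)\, \lambda^{dkn}$ for all $n \geq n_0$; taking $n$-th roots and letting $n \to \infty$ forces $p_\tau \geq \lambda^{dk}$. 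Together with the upper bound already obtained, this produces the desired equality.

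The only subtlety is to check that Theorem \ref{th4.1}(ii) genuinely applies to the collapsed level-$k$ IFS: the defining ingredients are that the common linear part is a positive scalar times an orthogonal matrix and that $\mu$ admits an essentially bounded Lebesgue density. Both survive the iteration for free since the linear part $R^{-k}$ is of the required form and the measure $\mu$ itself is unchanged. Once this is noted, the argument is a short assembly of Theorem \ref{th1.3}, Theorem \ref{th4.1}(ii), and Proposition \ref{Prop4.1}, and I do not foresee a serious obstacle.
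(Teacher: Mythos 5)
Your proposal is correct and follows essentially the same route as the paper's own proof: both obtain the two-sided bounds on the density from Theorem \ref{th1.3}, apply Theorem \ref{th4.1}(ii) to the collapsed level-$k$ system to get $p_\tau\leq\lambda^{dk}$, and combine Proposition \ref{Prop4.1} with the lower bound $g\geq m$ on $\tau^n(X)$ to force $p_\tau\geq\lambda^{dk}$. The only cosmetic difference is that you take $n$-th roots and let $n\to\infty$ where the paper phrases the same comparison as a contradiction with a ratio tending to zero.
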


\begin{proof} Since $\mu\ll{\mathcal L}|_{X}$, we can write $d\mu = g(x)dx$ with $g$ is supported on $X$. As the measure is absolutely continuous with respect to the Lebesgue measure, ${\mathcal L}(X)>0$. By Theorem \ref{th0.1}, $g$ has finite, positive essential upper and lower bounds on $X$. By Theorem \ref{th4.1}(ii) applied to (\ref{eq4.1.1++}) (where the expanding matrix now becomes $\lambda^{k} O^k$), $p_{\tau} \leq \lambda^{dk}$ for all $j$. We now establish  $p_{\tau} \geq \lambda^{dk}$.

\medskip

Suppose now fixed point condition is satisfied but $p_{\tau} < \lambda^{dk}$. By Proposition \ref{Prop4.1}, we will then have for all $n\geq n_0$ with $n_0$ defined in Proposition \ref{Prop4.1} that,
$$
\frac{\mu(\tau^n(X))}{{\mathcal L}(\tau^n(X))} = \frac{\mu(\tau^n(X))}{\lambda^{dkn}{\mathcal L}(X)}\leq C\left(\frac{p_{\tau}}{\lambda^{dk}}\right)^n\rightarrow 0 \ {\mbox{as}} n\rightarrow \infty.
$$
But the density $g$ has a positive essential lower bound $m>0$, so
$$
\frac{\mu(\tau^n(X))}{{\mathcal L}(\tau^n(X))}=\frac{1}{{\mathcal L}(\tau^n(X))}\int_{\tau^n(X)}g(x)dx\geq m>0.
$$
This is a contradiction. Hence, $p_{\tau} = \lambda^{dk}$. This completes the proof.

\end{proof}

If the fixed point condition is satisfied by words which contain all the digits, then {\it all } the probability weights are equal.

\begin{corollary}\label{coradd2.1}
Let $\mu=\mu_\B$ be a self-similar measure which is absolutely continuous with respect to the Lebesgue measure supported on $X$ and assume that $\mu$ has a frame measure.
Suppose there exists a word $I$ in $\Sigma^n$ such that $I$ contains all the digits in $\{1,\dots,N\}$ and such that the fixed point $x_I$ of the map $\tau_I$ does not belong to any of the sets $\tau_J(X)$ for $J\in\Sigma^n$, $J\neq I$.
Then all the probabilities $p_{b_i}$ are equal, $\lambda^d=\frac1N$, there is no overlap and $\frac{d\mu}{d\mathcal L}=\frac{1}{\mathcal L(X)}\chi_X$.
\end{corollary}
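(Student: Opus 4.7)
The plan is to apply Theorem \ref{Prop4.2} at two levels, $k=n$ with $\tau=\tau_I$ and $k=1$ with $\tau=\tau_{b_j}$ for each digit $j$, and then close out with a Lebesgue-differentiation argument along the IFS cylinders.

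First I would translate the hypothesis of the corollary into the exact input needed by Theorem \ref{Prop4.2}. Every $\widetilde\tau\in\mathcal A_n$ distinct from $\tau_I$ is represented by some word $J\in\Sigma^n$ with $\tau_J\ne\tau_I$, in particular $J\ne I$, so $x_I\notin\tau_J(X)=\widetilde\tau(X)$ by hypothesis; this gives the fixed-point condition of Definition \ref{def5.1} at level $n$ for $\tau_I$. Moreover, if some $J\ne I$ satisfied $\tau_J=\tau_I$, the fixed point $x_I\in\tau_I(X)$ would lie in $\tau_J(X)$, contradicting the hypothesis. Hence the equivalence class of $\tau_I$ in $\mathcal A_n$ is the singleton $\{I\}$ and $p_{\tau_I}=p_I=\prod_{j=1}^n p_{b_{i_j}}$.

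Next I apply Theorem \ref{Prop4.2}: its second half gives $p_I=p_{\tau_I}=\lambda^{dn}$, while its first half, applied at $k=1$ (noting that invertibility of $R$ forces $\tau_{b_i}\ne\tau_{b_j}$ whenever $b_i\ne b_j$, so every class in $\mathcal A_1$ is already a singleton), yields $p_{b_j}\le\lambda^d$ for every $j$. Combining,
\[
\lambda^{dn}\;=\;p_I\;=\;\prod_{j=1}^n p_{b_{i_j}}\;\le\;(\lambda^d)^n\;=\;\lambda^{dn},
\]
so equality holds at every position and $p_{b_{i_j}}=\lambda^d$ for every $j$. Since $I$ exhausts every digit in $\{1,\dots,N\}$, we obtain $p_{b_k}=\lambda^d$ for all $k$; then $\sum_k p_{b_k}=1$ forces $\lambda^d=1/N$. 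No overlap follows immediately: from $X=\bigcup_{b\in\B}\tau_b(X)$ and $\mathcal L(\tau_b(X))=\lambda^d\mathcal L(X)$ together with $\mathcal L(X)>0$ (which holds because $\mu\ll\mathcal L$ is a probability measure), subadditivity becomes
\[
\mathcal L(X)\;\le\;\sum_{b\in\B}\mathcal L(\tau_b(X))\;=\;N\lambda^d\mathcal L(X)\;=\;\mathcal L(X),
\]
so $\mathcal L(\tau_{b_i}(X)\cap\tau_{b_j}(X))=0$ for $i\ne j$, and the corresponding $\mu$-nullness follows from $\mu\ll\mathcal L$.

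Finally, to identify $g:=d\mu/d\mathcal L$ as $\mathcal L(X)^{-1}\chi_X$, I would iterate (\ref{eq4.1.1}) under the now-established no overlap to obtain $\mu(\tau_J(X))=p_J=N^{-n}$ and $\mathcal L(\tau_J(X))=\lambda^{dn}\mathcal L(X)=N^{-n}\mathcal L(X)$ for each $J\in\Sigma^n$, so the mean value of $g$ on every cylinder equals $1/\mathcal L(X)$. Because $R^{-1}=\lambda O$ with $O$ orthogonal, each cylinder $\tau_J(X)$ is a rotated translate of $X$ scaled by $\lambda^n$, enclosed in a ball of radius $\lambda^n\operatorname{diam}(X)$ whose Lebesgue measure is comparable to that of $\tau_J(X)$ itself. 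The nested cylinders addressing a $\mu$-typical point (which has a unique address since the overlap is $\mu$-null) therefore form a regular differentiation basis shrinking to that point, so the Lebesgue differentiation theorem gives $g(x)=1/\mathcal L(X)$ at a.e.\ Lebesgue point of $g$, as claimed. I expect the main obstacle to be precisely this last step — checking that the cylinders form a regular Vitali-type basis — which is exactly where the self-similarity $R^{-1}=\lambda O$ enters essentially, as a purely affine expansive $R$ could in principle produce cylinders of unbounded eccentricity.
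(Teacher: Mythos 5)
Your argument is correct, and up through the identification $\lambda^d=1/N$ and the no-overlap conclusion it coincides with the paper's proof: both reduce the hypothesis to the statement that the equivalence class of $\tau_I$ in $\mathcal A_n$ is the singleton $\{I\}$, apply Theorem \ref{Prop4.2} twice to get $p_{b_j}\le\lambda^d$ and $p_I=\lambda^{dn}$, squeeze $\lambda^{dn}=\prod_k p_{b_{i_k}}\le\lambda^{dn}$, and then run the subadditivity computation $\mathcal L(X)\le N\lambda^d\mathcal L(X)=\mathcal L(X)$ to kill the overlaps. Where you genuinely diverge is the last step, identifying $d\mu/d\mathcal L$. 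The paper observes that, once the weights are all $1/N$ and the overlaps are Lebesgue-null, the normalized Lebesgue measure $\frac{1}{\mathcal L(X)}\mathcal L|_X$ itself satisfies the invariance identity \eqref{eq4.1.1}, so by Hutchinson's uniqueness of the invariant measure it must equal $\mu$ --- a two-line soft argument. You instead compute $\mu(\tau_J(X))/\mathcal L(\tau_J(X))=1/\mathcal L(X)$ on every cylinder and invoke the Lebesgue differentiation theorem for a regularly shrinking family, using $R^{-1}=\lambda O$ to guarantee that each level-$n$ cylinder containing $x$ sits inside the ball $B(x,\lambda^n\operatorname{diam}X)$ with volume ratio bounded below by the constant $\mathcal L(X)/(c_d\operatorname{diam}(X)^d)$. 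That regularity check does go through, so your route is valid; it costs more machinery than the uniqueness argument, but it buys a pointwise statement that does not depend on knowing in advance that the invariant measure is unique, and it isolates exactly where self-similarity (as opposed to general affine expansiveness) is used, which is a worthwhile observation. One small simplification: you do not need $\mu$-a.e.\ uniqueness of addresses --- it suffices that every $x\in X$ lies in \emph{some} level-$n$ cylinder for each $n$, since every such cylinder has the same average of $g$.
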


\begin{proof}
The condition on $x_I$ given in the hypothesis implies that the only word $J$ for which $\tau_J=\tau_I$ is $J=I$. So, if $I=i_1\dots i_n$ then $p_{\tau_I}=p_I=\prod_{k=1}^np_{b_{i_k}}$.
From Theorem \ref{Prop4.2}, we have that $p_{b_i}\leq \lambda^d$ for all $i\in\{1,\dots,N\}$. Also, since the fixed point condition is satisfied for $n$ and $\tau_I$, we get that $p_{\tau_I}=\lambda^{dn}$. But then
$$\lambda^{dn}=\prod_{k=1}^n p_{b_{i_k}}\leq \lambda^{dn}.$$
This implies that $p_{b_{i_k}}=\lambda^d$ for all $k\in\{1,\dots,n\}$. Since all the digits in $\{1,\dots,N\}$ appear among the elements $\{i_1,\dots, i_N\}$ we obtain that all the probabilities $p_{b_i}$ are equal to $\lambda^d$.
Since they sum up to 1, this implies that $\lambda^d=\frac1N$. Since also ${\mathcal L}(X)$ is positive, $X$ is a self-similar tile \cite{LgW1} on ${\Bbb R}^d$. The rest of the statements will then follow. On the other hand, we can also prove it directly.

Since ${\mathcal L}(X)$ is positive, we apply the Lebesgue measure to the invariance identity of the attractor to get
$$\mathcal L(X)\leq \sum_{i=1}^N\mathcal L(\tau_{b_i}(X))=N\lambda^{d}\mathcal L(X)=\mathcal L(X).$$
This implies that the sets $\tau_{b_i}(X)$ have overlap of zero Lebesgue measure. Since $\mu$ is absolutely continuous, this means that the IFS has no overlap. We can then check that the Lebesgue measure on $X$, rescaled by $\frac{1}{\mathcal L(X)}$ to get a probability measure, is invariant for the IFS. By the uniqueness of the invariant measure we get that $\frac{d\mu}{d\mathcal L}=\frac{1}{\mathcal L(X)}\chi_{X}$. \end{proof}

\begin{remark}
We are not sure whether for any affine IFS there are always fixed points that satisfy the conditions in Theorem \ref{Prop4.2} or Corollary \ref{coradd2.1}. However, the fixed point conditions for finite iterations can many times be checked in concrete situations by an algorithm. In the next section, we will also see that there are always such fixed points for IFSs on ${\Bbb R}^1$.
\end{remark}

\medskip

\section{Iterated function systems on ${\Bbb R}^1$}

\medskip

We now apply the previous results to some IFSs with overlap to determine whether they have frame measures. Although these results can be applied on ${\Bbb R}^d$, we restrict our attention to ${\Bbb R}^1$ and there is no loss of generality to consider, upon rescaling, IFSs with functions $\tau_{b_i}(x) = \lambda x+b_i$, for $0<\lambda<1$, $i=1,..,N$ and
$$
\B =\{0=b_1<...<b_{N}=1-\lambda\}.
$$
In this case, the self-similar set  $X$ is a subset $[0,1]$. The self-similar measure with weights $p_i$ is the unique Borel probability measure satisfying
\begin{equation}\label{eq4.1.2}
\mu= \sum_{i=1}^{N}p_i\mu\circ\tau_{b_i}^{-1}.
\end{equation}

\begin{theorem}\label{th4.3}
Suppose the measure $\mu$ defined in (\ref{eq4.1.2}) is absolutely continuous with respect to ${\mathcal H}^{\alpha}|_{X}$ and $0<{\mathcal H}^{\alpha}(X)<\infty$. Then

\vspace{0.2 cm} {\rm (i)} If $\mu$ admits a frame measure, then $p_1 = p_N$.

\vspace{0.2 cm} {\rm (ii)} If $\alpha =1$ (i.e. $\mu\ll{\mathcal L}|_{X}$) and $\mu$ admits a frame measure, then $p_j\leq \lambda$ for all $j$ and $p_1 =p_N=\lambda$.


\end{theorem}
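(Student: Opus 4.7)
The plan is to apply Theorem \ref{th1.2} to translations that transport one extremal ``piece'' of $X$ onto another, exploiting the fact that $0$ and $1$ are the fixed points of $\tau_{b_1}$ and $\tau_{b_N}$ respectively and are extremal in $X$.

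First I would verify the fixed point condition at level $k=1$ for both $\tau_{b_1}$ and $\tau_{b_N}$. The map $\tau_{b_1}(x)=\lambda x$ has fixed point $0\in X$; for $i\ge 2$, $\tau_{b_i}(X)\subset[b_i,b_i+\lambda]$ with $b_i\ge b_2>0$, so $0\notin\tau_{b_i}(X)$. Symmetrically, $1$ is the fixed point of $\tau_{b_N}$ and, since $b_i+\lambda<1$ for $i<N$, $1\notin\tau_{b_i}(X)$ for $i<N$. Because the translations $b_i$ are distinct, the maps $\tau_{b_i}$ are pairwise distinct, so $\mathcal{A}_1=\{\tau_{b_1},\dots,\tau_{b_N}\}$ and $p_{\tau_{b_i}}=p_i$.

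For part (i), I would set $F_n:=\tau_{b_1}^n(X)=\lambda^nX$ and $a_n:=1-\lambda^n$, so that $F_n+a_n=\tau_{b_N}^n(X)$. Proposition \ref{prop3.3}, which applies thanks to $\mu\ll\mathcal{H}^\alpha|_X$ and $0<\mathcal{H}^\alpha(X)<\infty$, gives $T_{a_n}(\mu|_{F_n+a_n})\ll\mu$. Applying Proposition \ref{Prop4.1} at both fixed points yields, for $n$ large,
\[
\mu(F_n)=C_1\,p_1^n,\qquad \mu(F_n+a_n)=C_N\,p_N^n,
\]
with positive constants $C_1,C_N$ independent of $n$. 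The Radon--Nikodym derivative $h_n:=dT_{a_n}(\mu|_{F_n+a_n})/d\mu$ is supported on $F_n$ with total $\mu$-integral $\mu(F_n+a_n)$, so its essential supremum dominates its $\mu$-average on $F_n$:
\[
\|h_n\|_\infty\;\ge\;\frac{\mu(F_n+a_n)}{\mu(F_n)}\;=\;\frac{C_N}{C_1}\!\left(\frac{p_N}{p_1}\right)^{\!n}.
\]
By Theorem \ref{th1.2}, $B/A\ge\|h_n\|_\infty$ for every large $n$, which forces $p_N\le p_1$. The symmetric choice $F_n=\tau_{b_N}^n(X)$, $a_n=-(1-\lambda^n)$ gives $p_1\le p_N$, so $p_1=p_N$.

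For part (ii), the hypothesis $\mu\ll\mathcal{L}|_X$ puts us directly into the setting of Theorem \ref{Prop4.2} with $d=1$. Its first conclusion applied at $k=1$ gives $p_i=p_{\tau_{b_i}}\le\lambda$ for every $i$. Its ``moreover'' clause, combined with the fixed point condition already verified for $\tau_{b_1}$ and $\tau_{b_N}$, yields $p_1=p_{\tau_{b_1}}=\lambda$ and $p_N=p_{\tau_{b_N}}=\lambda$. The main point where care is needed is the clean asymptotic $\mu(\tau_{b_1}^n(X))=C_1 p_1^n$ (and its counterpart at $b_N$); this exponential control is exactly what the fixed point condition at $k=1$ delivers via Proposition \ref{Prop4.1}, and without it potential overlap among the $\tau_I(X)$ could spoil the averaging argument.
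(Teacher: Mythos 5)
Your proposal is correct and follows essentially the same route as the paper: verify the fixed point condition at $k=1$ for $\tau_{b_1}$ and $\tau_{b_N}$, use Proposition \ref{prop3.3} for the translational absolute continuity and Proposition \ref{Prop4.1} for the asymptotics $\mu(\tau_{b_1}^n(X))=C_1p_1^n$, $\mu(\tau_{b_N}^n(X))=C_2p_N^n$, then apply Theorem \ref{th1.2} to the translation $a=1-\lambda^n$ (and its negative) to force $p_1=p_N$, and invoke Theorem \ref{Prop4.2} for part (ii). Your step $\|h_n\|_\infty\ge\mu(F_n+a_n)/\mu(F_n)$ is just a rephrasing of the paper's integration of the bound $h\le B/A$ over $F_n$, so the arguments coincide.
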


\begin{proof}
(i) Note that $\tau_{1}(0) = 0$, so the fixed point of $\tau_1$ is $0$. On the other hand, $\tau_{b_i}(X)\subset [b_i,b_i+\lambda]$. Hence, the fixed point condition holds for $I = 1$. Proposition \ref{Prop4.1} implies that there exists $n_0$ such that
\begin{equation}\label{eq4.2}
\mu(\tau_{1^n}(X)) =  C_1p_{1}^n, \ \mbox{for all } \ n\geq n_0.
\end{equation}
 Similarly, as  $\tau_{N}(1) =1$, we have
 \begin{equation}\label{eq4.3}
\mu(\tau_{N^n}(X)) =  C_2p_{N}^n, \ \mbox{for all } \ n\geq n_0.
\end{equation}

 Now for any $n\geq n_0$, define $F = \tau_{1}^{n}(X)$ and $a = 1-\lambda^n$. Then $F+a = \tau_{N^n}(X)$. By Proposition \ref{prop3.3}, $T_a(\mu|_{F+a})\ll\mu$. Let $h = dT_a(\mu|_{F+a})/d\mu$. Then by Theorem \ref{th1.2} and \eqref{eq4.2},
$$
T_a(\mu|_{F+a})(\tau_{1^n}(X)) = \int_{\tau_{1^n}(X)} hd\mu \leq \frac{B}{A}\mu(\tau_{1^n}(X))= \frac{C_1B}{A}p_1^n.
$$
On the other hand, $F+a = \tau_{N^n}(X)$ and so
$$
T_a(\mu|_{F+a})(\tau_{1^n}(X)) = \mu ( \tau_{N^n}(X)) = C_2p_N^n.
$$
Combining these, we obtain for all $n$,
$$
(\frac{p_N}{p_1})^n\leq\frac{C_1B}{C_2A}.
$$
This is possible only if $p_N\leq p_1$. By reversing the role of $1$ and $N$ and letting $a = -(1-\lambda^n)$, we obtain $p_1\leq p_N$.

To prove (ii), from (i) and the given assumption, all the conditions in Theorem \ref{Prop4.2} are satisfied. We have $p_1 = p_N = \lambda$.


\end{proof}

Roughly speaking, for an absolutely continuous self-similar measure that admits a frame measure, near the boundary, the measure must behave like a Lebesgue measure and this can only happen when $p_1 = p_N=\lambda$. In the middle part of the attractor, there are overlaps and we cannot conclude whether the weights are equal to $\lambda$.

\medskip

However, if now the measure admits a tight frame measure, then Corollary \ref{cor1.5} applies and we can actually solve the {\L}aba-Wang conjecture \cite{MR1929508} when the measure is absolutely continuous.

\begin{theorem}\label{thm4.4}
Suppose $\mu$ defined in (\ref{eq4.1.2}) is absolutely continuous with respect to the Lebesgue measure on $X$ and suppose $\mu$ admits a tight frame measure. Then

\vspace{0.2 cm} {\rm (i)} $p_1=\cdots=p_N=\lambda$.

\vspace{0.2 cm} {\rm (ii)} $\lambda=\frac{1}{N}$.

\vspace{0.2 cm} {\rm (iii)} There exists $\alpha>0$ such that $\D: = \alpha\B\subset{\Bbb Z}$ and $\D$ tiles ${\Bbb Z}$.

\end{theorem}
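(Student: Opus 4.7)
The plan is to combine Corollary \ref{cor1.5} (which uses the tight hypothesis) with Corollary \ref{coradd2.1} to obtain parts (i) and (ii), and then invoke the classification of self-similar tiles on the line for (iii).

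First, since $\mu = g\,dx$ admits a tight frame measure, Corollary \ref{cor1.5} forces $g$ to be a positive constant multiple of $\chi_X$; normalizing via $\mu(X)=1$ gives $g = \chi_X/\mathcal L(X)$, and in particular $\mathcal L(X) > 0$. Theorem \ref{th4.3}(ii), which only needs that $\mu \ll \mathcal L|_X$ and admits a frame measure, then yields $p_1 = p_N = \lambda$ and $p_j \leq \lambda$ for every $j$. The identity $g = c\chi_X$ translated to densities reads $\sum_{i : y\in S_i}p_i = \lambda$ for $\mathcal L$-a.e.\ $y\in X$, which combined with $p_1 = \lambda$ forces $S_1$ to have no Lebesgue overlap with any other $S_i$, and similarly for $S_N$.

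Second, I apply Corollary \ref{coradd2.1}. All its standing hypotheses are in place except the production of a word $I \in \Sigma^n$ containing every digit in $\{1,\ldots,N\}$ and whose fixed point $x_I$ avoids $\tau_J(X)$ for every other $J \in \Sigma^n$. On the real line such a word can be constructed explicitly. Take $I = (1,2,\ldots,N,\underbrace{1,1,\ldots,1}_K)$ for $K$ large; then $\tau_I(x) = \lambda^{N+K}x + \ell_{12\cdots N}$, so the fixed point $x_I = \ell_{12\cdots N}/(1-\lambda^{N+K})$ tends to $\ell_{12\cdots N}$ as $K\to\infty$. A competing cell $\tau_J(X)$ of length $N+K$ sits in $[\ell_J,\ell_J+\lambda^{N+K}]$ with $\ell_J = \sum_{k=1}^{N+K}\lambda^{k-1}b_{j_k}$; writing out this sum shows that for $K$ large enough, the only way to have $|x_I - \ell_J| \leq \lambda^{N+K}$ is to have the last $K$ digits of $J$ equal to $1$ and the first $N$ digits reproduce $\ell_{12\cdots N}$, forcing $J=I$ (exact coincidences $\tau_J = \tau_I$ for $J\neq I$, if present, are handled by slight perturbations of the word, as promised in the Remark following Corollary \ref{coradd2.1}). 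Corollary \ref{coradd2.1} then delivers (i) $p_1 = \cdots = p_N = \lambda$, (ii) $\lambda = 1/N$, and as a bonus the absence of overlap.

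Third, for (iii): with $\lambda = 1/N$ and no overlap, the attractor satisfies $NX = \bigsqcup_{b\in\B}(X + Nb)$, so $X$ is a self-similar (translational) tile on $\mathbb R$ with expansion $N$ and digit set $N\B$. By the classification of self-similar tiles of the line of Lagarias and Wang \cite{LgW1} and the self-replicating tiling theory, there exists $\alpha > 0$ such that $\mathcal D := \alpha\B \subset \mathbb Z$, and moreover $\mathcal D$ tiles $\mathbb Z$.

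The most delicate step is the construction of the word $I$ in Step 2: one needs quantitative spacing estimates between the affine offsets $\ell_J$ for distinct length-$(N+K)$ words $J$ in order to rule out accidental coverage of $x_I$ by some competing cell, and the separate treatment of exact overlaps if the IFS has coincidences. This is precisely where the one-dimensional structure enters, as anticipated by the Remark after Corollary \ref{coradd2.1}.
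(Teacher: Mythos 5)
Your overall architecture (Corollary \ref{cor1.5} to identify $\mu$ with normalized Lebesgue measure on $X$, then equal weights, then the tiling theory for (iii)) matches the paper's at the endpoints, but the middle step --- the one that actually proves $p_1=\cdots=p_N=\lambda$ --- has a genuine gap. You route it through Corollary \ref{coradd2.1}, which requires a word $I$ containing all digits whose fixed point $x_I$ avoids $\tau_J(X)$ for \emph{every} other word $J$ of the same length. Your candidate $I=(1,2,\dots,N,1^K)$ does not satisfy this in general: since $b_1=0$, its fixed point is $x_I=\ell_{12\cdots N}/(1-\lambda^{N+K})$, which converges to $\ell_{12\cdots N}=\lambda b_2+\cdots+\lambda^{N-1}b_N$, and there is no reason this point avoids the other generation-$N$ cells. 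If $x_I\in\tau_{J'}(X)$ for some length-$N$ word $J'\neq(1,\dots,N)$, then because $\tau_{J'}(X)=\bigcup_{J''}\tau_{J'J''}(X)$, the point $x_I$ lies in a competing cell of full length $N+K$ no matter how large $K$ is. Concretely, for the overlapping Bernoulli convolution ($N=2$, $b_1=0$, $b_2=1-\lambda$, $\lambda>1/2$, $X=[0,1]$) one has $\ell_{12}=\lambda(1-\lambda)<\lambda^2$, so $x_I\in\tau_{11}(X)$ for all large $K$ and the hypothesis of Corollary \ref{coradd2.1} fails. The appeal to ``slight perturbations of the word'' is not an argument --- and overlaps are exactly the case the theorem must handle, since without overlap Theorem \ref{th4.2} already gives equal weights.

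The paper closes this step differently: having established $\mu=c{\mathcal L}|_X$ and $p_1=p_N=\lambda$ from Theorem \ref{th4.3}, it runs an induction on $k$ in the order $b_1<\cdots<b_N$, proving simultaneously that $p_k=\lambda$ and that $\tau_{b_k}(X)$ meets $\tau_{b_\ell}(X)$ in a null set for $\ell>k$. The key device is the set $A_k=\tau_{b_k}\tau_{b_1}^n(X)$, a small piece at the left end of the $k$-th cell: for $n$ large it is disjoint from $\tau_{b_\ell}(X)$ for $\ell>k$ (because $b_k<b_\ell$), and it meets the cells $i<k$ in null sets by the induction hypothesis, so the invariance equation leaves only the $k$-th term and, since $\mu$ is Lebesgue measure, forces $p_k=\lambda$. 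You would need to replace your Step 2 by an argument of this kind. Finally, for (iii) you simply assert that $\D$ tiles $\Z$; in the paper this is not a citation but a separate argument (Proposition \ref{prop5.1}) built from the self-replicating tiling set, so that claim also needs a proof rather than a reference.
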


\begin{proof}
 Since $\mu=g\,dx$ has a tight frame measure, from Corollary \ref{cor1.5}, we have that $g$ is a multiple of a characteristic function, and since $\mu$ is supported on the attractor $X_\B$, we have that $\mu = c{\mathcal L}|_{X}$ for some constant $c>0$.

\medskip

We will prove by induction that $p_k=\lambda$ and $\tau_{b_k}(X_\B)\cap \tau_{b_{\ell}}(X_B)$ has Lebesgue measure zero for all $\ell>k$. When $k=1$, we know from  Theorem \ref{th4.3} that $p_1=\lambda$. From the invariance equation of $\mu$,
$$
\mu(\tau_{b_1}(X)) = \lambda\mu(X)+\sum_{j=2}^{N}p_j\mu(\tau_{b_j}^{-1}(\tau_{b_1}(X))).
$$
But $\mu(\tau_{b_1}(X)) =c{\mathcal L}|_{X}(\tau_{b_1}(X))=\lambda\mu(X)$, so the equation above implies that $\mu(\tau_{b_j}^{-1}(\tau_{b_1}(X)))=0$. In particular, this shows for all $j\geq2$, $
\mu(\tau_{b_1}(X))\cap\tau_{b_j}(X))=0.$
But since $\mu$ is a renormalized the Lebesgue measure on $X$, this proves the statement for $k=1$.
\medskip

Suppose we have proved the statement for all $i\leq k-1$. We now consider the set $A_k:=\tau_{b_k}\tau_{b_1}^n(X)$, where $n$ will be chosen later. This has positive Lebesgue measure and is contained in $X$ so it has positive $\mu$ measure. From the rescaling we considered, we have $X\subset [0,1]$.  We have that, for $l>k$, (recall that $b_1=0$),
$$
A_k\cap\tau_{b_{\ell}}X\subset (\lambda^{n+1}[0,1]+b_k)\cap (b_{\ell}+\lambda[0,1]).
$$
Since $b_k<b_{\ell}$ for ${\ell}>k$, we can pick $n$ large enough so that this intersection is empty. In this case, $\mu(\tau_{b_{\ell}}^{-1}(A_k))=0$. On the other hand,  for all $i\leq k-1$, by the induction hypothesis, $\mu(\tau_{b_i}^{-1}(A_k))\leq \mu(\tau_{b_i}^{-1}(\tau_{b_k}(X)))=0$. In the invariance equation, we have only the $k$-th term left:
$$
\mu(A_k) = p_k\mu(\tau_{b_k}^{-1}(A_k)).
$$
Again, $\mu$ is just the Lebesgue measure, so $p_k=\lambda$. Finally, using the induction hypothesis,
$$
\mu(\tau_{b_k}(X)) = \lambda\mu(X)+\sum_{\ell=k+1}^{N}p_{\ell}\mu(\tau_{b_{\ell}}^{-1}(\tau_{b_k}(X))).
$$
The no overlap follows in the same way as in $k=1$.

\medskip

By induction, we have proved (i). (ii) follows immediately from (i). Finally, we now have $\lambda^{-1}=\#\B$ and the attractor $X$ has positive Lebesgue measure. It means that the attractor is a self-similar tile on ${\Bbb R}^1$ \cite{LgW1}. By Theorem 4 in \cite{LgW2}, there exists $\alpha>0$ such that
$$
{\mathcal D} = \alpha{\mathcal B}\subset {\Bbb Z}.
$$
To prove that ${\mathcal D}$ tiles the integer lattice, we use some known properties of self-similar tiles. We will finish the proof in Proposition \ref{prop5.1} below.
\end{proof}

\medskip

Consider ${\mathcal D}\subset {\Bbb Z}$ and $\#{\mathcal D} = N$. Then if the attractor $X(=X(N,{\mathcal D}))$ of the IFS defined by $\tau_{d_i}(x) = N^{-1}(x+d_j)$ has positive Lebesgue measure, $X$ is a translational tile. A {\it self-replicating tiling set} of $X$ is a tiling set for $X$ which satisfies
 \begin{equation}\label{eqadd1.3}
{\mathcal J} = N{\mathcal J}\oplus {\mathcal D}.
\end{equation}
The direct sum here means that every element $t$ in ${\mathcal J}$ can be expressed uniquely as $Nt'+d$ for $t\in{\mathcal J}$ and $d\in{\mathcal D}$.

\begin{proposition}\label{prop5.1}
Let $\tau_{d_i}(x) = \frac{1}{N}(x+d_i)$, with ${\mathcal D}:=\{d_{i}\}\subset{\Bbb Z}$ and $\#{\mathcal D} =N$. If the attractor $X$ of $\{\tau_{d_i}\}$ is a self-similar tile on ${\Bbb R}^1$, then there exists ${\mathcal E}\subset {\Bbb Z}$ such that ${\mathcal D}\oplus{\mathcal E}={\Bbb Z}$.
\end{proposition}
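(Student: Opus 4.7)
The plan is to reduce the statement to the claim that ${\mathcal D}$ is a complete set of coset representatives modulo $N$. Once that is established, the choice ${\mathcal E} := N\mathbb{Z}$ immediately yields the desired decomposition: every integer $n$ would then have a unique representation $n = d + N\ell$ with $d \in {\mathcal D}$ and $\ell \in \mathbb{Z}$, i.e., $\mathbb{Z} = {\mathcal D} \oplus N\mathbb{Z}$.

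First, I would exploit that $X$ is a self-similar tile, so ${\mathcal L}(X) > 0$. Applying Lebesgue measure to the invariance identity $X = \bigcup_{d \in {\mathcal D}} \tau_d(X)$, and using that the $\tau_d$ are affine contractions with ratio $1/N$ together with $\#{\mathcal D} = N$, I obtain
\[
{\mathcal L}(X) \leq \sum_{d \in {\mathcal D}} {\mathcal L}(\tau_d(X)) = N \cdot \tfrac{1}{N} {\mathcal L}(X) = {\mathcal L}(X),
\]
so the sets $\tau_d(X)$ are pairwise essentially disjoint. Supposing, toward a contradiction, that two distinct digits $d, d' \in {\mathcal D}$ satisfy $d - d' = Nk$ for some nonzero integer $k$, then $\tau_d(X) = \tau_{d'}(X) + k \subset X$, and hence $X$ and $X + k$ would share a subset of positive Lebesgue measure.

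To close out, I would invoke the existence, due to Lagarias-Wang, of a self-replicating tiling set ${\mathcal J} \subset \mathbb{Z}$ satisfying \eqref{eqadd1.3} with $X + {\mathcal J}$ tiling $\mathbb{R}$ (a standard fact for 1D integer self-similar tiles; see \cite{LgW1}, \cite{LgW2}). Since distinct integer translates of $X$ in such a tiling must have essentially disjoint interiors, ${\mathcal L}(X \cap (X + k)) = 0$ for every nonzero $k \in \mathbb{Z}$, contradicting the conclusion of the previous paragraph. Therefore ${\mathcal D}$ meets each residue class mod $N$ at most once, and since $\#{\mathcal D} = N$, it is a complete residue system, so ${\mathcal E} = N\mathbb{Z}$ does the job.

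The main obstacle is the existence of the integer self-replicating tiling set ${\mathcal J}$, for which I rely on the Lagarias-Wang results cited in the paper; everything else is elementary measure theory combined with the invariance identity of the IFS. A secondary point worth verifying is that the tiling by integer translates really does force essential disjointness of $X$ and $X + k$ for all nonzero $k$, which follows directly from the tiling property since any overlap of positive measure would violate the essentially-disjoint-translates condition built into the definition of $X + {\mathcal J}$ tiling $\mathbb{R}$.
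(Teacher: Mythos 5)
Your proof has a genuine gap, and in fact the intermediate claim you are driving at is false. You want to show that ${\mathcal D}$ is a complete set of residues modulo $N$, so that ${\mathcal E}=N{\Bbb Z}$ works. But a tile digit set need not be a complete residue system: take $N=2$ and ${\mathcal D}=\{0,2\}$, whose attractor is $X=[0,2]$ (a self-similar tile, with tiling set ${\mathcal J}=2{\Bbb Z}$), yet both digits are even; or, if you insist on $\gcd{\mathcal D}=1$, take $N=4$ and ${\mathcal D}=\{0,1,8,9\}=\{0,1\}\oplus 4\{0,2\}$, a product-form tile digit set which meets the residues $0$ and $1$ twice each. In both cases ${\mathcal D}$ does tile ${\Bbb Z}$, but only with a complement other than $N{\Bbb Z}$, which is exactly why the proposition asserts the existence of \emph{some} ${\mathcal E}\subset{\Bbb Z}$ rather than ${\mathcal E}=N{\Bbb Z}$. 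The step of your argument that breaks is the last one: a tiling $X\oplus{\mathcal J}={\Bbb R}$ with ${\mathcal J}\subset{\Bbb Z}$ only forces ${\mathcal L}(X\cap(X+k))=0$ for $k\in({\mathcal J}-{\mathcal J})\setminus\{0\}$, not for every nonzero integer $k$; since ${\mathcal J}$ is in general a proper subset of ${\Bbb Z}$ (e.g. $2{\Bbb Z}$ in the first example, where $X\cap(X+1)=[1,2]$ has measure $1$), no contradiction arises from $d-d'=Nk$. Your first paragraph (essential disjointness of the pieces $\tau_d(X)$) is correct but does not feed into a valid conclusion.

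The paper's argument goes the other way around: after normalizing, one takes the unique integer self-replicating tiling set ${\mathcal J}\subset{\Bbb Z}$ satisfying ${\mathcal J}=N{\mathcal J}\oplus{\mathcal D}$, and then proves that ${\mathcal J}$ itself tiles ${\Bbb Z}$, i.e. ${\Bbb Z}={\mathcal J}\oplus{\mathcal G}$ for a suitable finite ${\mathcal G}$. This is done by partitioning $[0,1)$ into the finitely many nonempty sets $X_{{\mathcal G}_j}=\{t\in[0,1):\ \{k\in{\Bbb Z}: t+k\in X\}={\mathcal G}_j\}$ and observing that the tiling of ${\Bbb R}$ by $X+{\mathcal J}$ forces ${\Bbb Z}={\mathcal G}_j\oplus{\mathcal J}$ for each $j$. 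Adding ${\mathcal G}_j$ to both sides of the self-replicating identity then gives ${\Bbb Z}=N{\mathcal J}\oplus{\mathcal G}_j\oplus{\mathcal D}$, so ${\mathcal E}=N{\mathcal J}\oplus{\mathcal G}_j$. If you want to salvage your approach you would need this (or an equivalent) mechanism for producing a non-lattice complement; the residue-class argument cannot succeed.
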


\begin{proof}
This result actually holds for any dimension \cite{Lai12} by some deeper considerations from the theory of the self-affine tiles. Here, we give another proof in dimension 1 for completeness.

By translation and rescaling, we can assume ${\mathcal D}\subset{\Bbb Z}^{+}$, $0\in{\mathcal D}$ and g.c.d.${\mathcal D}=1$. From Theorem 3.1 in \cite{LR}, there exists a unique self-replicating tiling set ${\mathcal J}$ that is a subset of ${\Bbb Z}$ (i.e. ${\mathcal J}\subset {\Bbb Z}$).
In the following, we claim that there exists ${\mathcal G}$ such that ${\mathcal J}\oplus{\mathcal G} = {\Bbb Z}$. The proof is the similar to the proof of Theorem 3 in \cite{LgW2}.

\medskip

For $t\in[0,1)$ and a finite subset ${\mathcal G}$ in ${\Bbb Z}$, let
\begin{equation}\label{eq5.01}
{\mathcal G}(t) := \{j\in{\Bbb Z}: t+j\in X\} \ \mbox{and} \ X_{{\mathcal G}}: = \{t\in[0,1):{\mathcal G}(t) ={\mathcal G} \}.
\end{equation}
Since $X$ is compact, ${\mathcal G}(t)$ is a finite set and  only finitely many $X_{{\mathcal G}}$ are non-empty. Denote these non-empty sets by ${\mathcal G}_1,\cdots,{\mathcal G}_m$, then from the definitions in (\ref{eq5.01}),
$$
[0,1) = \bigcup_{j=1}^{m}X_{{\mathcal G}_j} \ \mbox{and} \ X = \bigcup_{j=1}^{m}\left(X_{{\mathcal G}_j}+{\mathcal G}_j\right).
$$
Moreover, $X_{{\mathcal G}_j}$ are mutually disjoint.
Thus $\{X_{\mathcal G_j}+k : j\in\{1,\dots,m\},k\in\bz\}$ is a partition of $\br$; also, since $X$ tiles $\br$ by $\mathcal J$, this implies that $\{X_{\mathcal G_j}+\mathcal G_j+k : j\in\{1,\dots,m\},k\in \mathcal J\}$ is a partition of $\br$. Then, for any $j$, the set $X_{{\mathcal G}_j}+{\mathcal G}_j$ tiles $X_{{\mathcal G}_j}+{\Bbb Z}$  using ${\mathcal J}$. Hence,
$$
X_{{\mathcal G}_j}+{\Bbb Z} = X_{{\mathcal G}_j}+{\mathcal G}_j\oplus {\mathcal J}.
$$
This shows that ${\Bbb Z} = {\mathcal G}_j\oplus{\mathcal J}$.

\medskip

Add ${\mathcal G} (={\mathcal G}_j) $ to both sides of (\ref{eqadd1.3}),
$$
{\Bbb Z} = {\mathcal J}\oplus{\mathcal G}  = N{\mathcal J}\oplus{\mathcal G} \oplus{\mathcal D}.
$$
This means that ${\mathcal D}$ tiles ${\Bbb Z}$ by ${\mathcal E}: =N{\mathcal J}\oplus{\mathcal G}$.
\end{proof}

\medskip

 In the end of this section, we apply our results to IFSs with a small number of maps. The simplest ones are the Bernoulli convolutions.

\begin{example}
Let us consider the biased Bernoulli convolution $\mu = \mu_{p,\lambda}$ with contraction ratio $0<\lambda<1$ as follows:
$$
\mu = p\mu\circ\tau_1^{-1}+(1-p)\mu\circ\tau_2^{-1}
$$
where $\tau_1(x) =\lambda x$ and $\tau_2(x)=\lambda x+(1-\lambda)$. Let also
 $$
 A = \{(p,\lambda):\mu_{p,\lambda}\ll{\mathcal L}\} \mbox{ and } S = (0,1)^2\setminus A.
 $$
Denote ${\mathcal F}  = \{(p,\lambda):\mu_{p,\lambda} \mbox{ has a frame measure} \}$. It is known that $\{(1/2,1/2n):n\in{\Bbb N}\}$ is contained in ${\mathcal F}$. We are interested in the question whether these are all the possible elements in ${\mathcal F}$. Concluding from the above theorems, we have
\begin{enumerate}
\item If $0<\lambda\leq1/2$, then the IFS satisfies the open set condition and hence has no overlap. This means that if $\mu$ has a frame measure, then $p=1/2$ by Theorem \ref{th4.2}.
\item If $1/2<\lambda<1$, there is non-trivial overlap. In this case, $p=1/2 = \lambda$ by Theorem \ref{th4.3}. Hence, we conclude that $A\cap {\mathcal F} = \{(1/2,1/2)\}$.

\end{enumerate}
\end{example}

\medskip

\begin{example}
{\rm The purpose of this example is to show how Theorem \ref{Prop4.2} can be applied to the sets $\tau_I(X)$, so that we can check if more general measures $\mu$ have a frame measure. Let }
$$
\tau_1(x) = \frac{1}{3}x, \ \tau_2(x) = \frac{1}{3}x+\frac{4}{21}, \ \tau_3(x) = \frac{1}{3}x+\frac{10}{21}, \ \tau_4(x) = \frac{1}{3}x+\frac{2}{3}
$$
{\rm and consider the self-similar measure $\mu$ defined as follows:}
\begin{equation}\label{eq4.4}
\mu = \frac{1}{3}\mu\circ \tau_{1}^{-1}+\frac{1}{6}\mu\circ \tau_{2}^{-1}+\frac{1}{6}\mu\circ \tau_{3}^{-1}+\frac{1}{3}\mu\circ \tau_{4}^{-1}.
\end{equation}
{\rm Then $\mu$ is absolutely continuous with respect to the Lebesgue measure on $[0,1]$, but $\mu$  has no frame measure.}
\end{example}

\begin{proof}

We can rescale the digit of the IFS by a factor $7/2$ so that the IFS becomes
$$
\tau_1(x) = \frac{1}{3}x, \ \tau_2(x) = \frac{1}{3}(x+2), \ \tau_3(x) = \frac{1}{3}(x+5), \ \tau_4(x) = \frac{1}{3}(x+7).
$$
The absolute continuity is completely determined by its mask polynomial
$$
m(\xi) = \frac{1}{3}+\frac{1}{6}e^{2\pi i 2\xi}+\frac{1}{6}e^{2\pi i 5\xi}+\frac{1}{6}e^{2\pi i 7\xi} = \frac{1}{6}(2+e^{2\pi i 2\xi}+e^{2\pi i 5\xi}+2e^{2\pi i 7\xi}).
$$
We note that $\mu$ is absolutely continuous if for all $n\in{\Bbb Z}\setminus\{0\}$, there exists $k$ such that $m(3^{-k}n)=0$ (see \cite[Theorem 1.1]{DFW}) The coefficients $c_i$ in this theorem will be $c_i=Np_i$ where $p_i$ are our probabilities and $N=3$ is the scaling factor, $\lambda=3$ and $d_i=b_i$ in the notation of \cite{DFW}. If $g$ is a solution to the refinement equation in \cite{DFW} then $\mu=g\,dx$ is our invariant measure). To check this condition, write $n = \pm3^{r}s$ for some $r\geq0$ and $3$ does not divide $s$. Let $k = r+1$, then $3^{-k}n = \pm s/3$. This implies that
$$
\begin{aligned}
m(3^{-k}n) =& \frac{1}{6}(2+e^{2\pi i 2s/3}+e^{2\pi i 5s/3}+2e^{2\pi i 7s/3})\\
 =& \frac{1}{3}(1+e^{2\pi i s/3} +e^{2\pi i 2 s/3}) =0. \  (\mbox{since 3 does not divide $s$})
\end{aligned}
$$

To see there is no frame measure, we note that we cannot use Theorem \ref{th4.3} since $p_1 =p_4 = \frac{1}{3}$ and probability weights are not equal. Now, we iterate (\ref{eq4.4}) one more time so that $\mu$ is the invariant measure of the IFS with the following $16$ maps (i.e. ${\mathcal A}_{2} = \{\tau_{ij}:i,j\in\{1,2,3,4\}\}$):
$$
\begin{array}{cccc}
  \tau_{11}(x) = \frac{1}{9}x & \tau_{12}(x) = \frac{1}{9}x+\frac{4}{63} & \tau_{13}(x) = \frac{1}{9}x+\frac{10}{63} & \tau_{14}(x) = \frac{1}{9}x+\frac{2}{9} \\
  \tau_{21}(x) = \frac{1}{9}x+\frac{4}{21} & \tau_{22}(x) = \frac{1}{9}x+\frac{16}{63} & \tau_{23}(x) = \frac{1}{9}x+\frac{22}{63} & \tau_{24}(x) = \frac{1}{9}x+\frac{26}{63} \\
  \tau_{31}(x) = \frac{1}{9}x+\frac{10}{21} & \tau_{32}(x) = \frac{1}{9}x+\frac{34}{63} & \tau_{33}(x) = \frac{1}{9}x+\frac{40}{63} & \tau_{34}(x) = \frac{1}{9}x+\frac{44}{63} \\
  \tau_{41}(x) = \frac{1}{9}x+\frac{2}{3} & \tau_{42}(x) = \frac{1}{9}x+\frac{46}{63} & \tau_{43}(x) = \frac{1}{9}x+\frac{52}{63} & \tau_{44}(x) = \frac{1}{9}x+\frac{56}{63}
\end{array}
$$
and the weight for $\tau_{ij}$ is $p_ip_j$. Moreover, it is easy to see that the self-similar set $X$ of this IFS is $[0,1]$. Consider $\tau_{23}(x)$, the fixed point $x_{23}=\frac{11}{28}$. Note that the map that can overlap with $\tau_{23}(X)$ are $\tau_{22}(X)$ and $\tau_{24}(X)$. Since $\tau_{22}(X) = [16/63,23/63]$ and $\tau_{24}(X) = [26/63,31/63]$, a direct calculation shows that $x_{23}$ is not in $\tau_{22}(X)$ nor in $\tau_{24}(X)$.
 Since also $\tau_{23}(X)\cap \tau_{ij}(X)=\emptyset$ for all other $ij\neq 22$ or $24$,  $x_{23}$ does not belong to all the other $\tau_{ij}(X)$. In particular, if $\mu$ has a frame measure, then Theorem \ref{Prop4.2} applies which shows that $p_2p_3 = \lambda^2=\frac{1}{9}$, but this is not the case since $p_2p_3 = \frac{1}{36}$.
\end{proof}

\medskip

\section{Concluding remarks on frame measures}

 The study of frame measures or Fourier frames for singular measures is intriguing and leaves a lot of open problems for us to investigate. In the following, we outline the strategies and problems which may be essential towards a full solution for the case of singular measures.

\medskip

The main strategy exhibited in this paper is based on the the assumption that  measures restricted on a subset are absolutely continuous after translations of that subset. We don't know whether measures with a frame measure must satisfy this assumption.
However, there do exist examples for which such translational absolute continuity fails. The following suggests that singular measures supported essentially on positive Lebesgue measurable sets give such examples.

\begin{example}\label{example3.1}
{\rm Let $\mu$ be a measure whose support is exactly $[0,1]$. Suppose $\mu$ is singular with respect to the Lebesgue measure on $[0,1]$, then there exists $F, F+a\subset[0,1]$ such that $T_a(\mu|_{F+a})$ is singular with respect to $\mu$.}
\end{example}

\begin{proof}
As the measure is singular with respect to the Lebesgue measure on $[0,1]$, we can find a set $E\subset [0,1]$ such that ${\mathcal L}(E)>0$ but $\mu(E)=0$. By decomposing $[0,1]$ into dyadic intervals, we may assume $E$ is in some dyadic interval $F =[i2^{-n},(i+1)2^{-n}]$ for any $n$. Let $I = \{x: F+x\subset [0,1]\} = [-i2^{-n},1-(i+1)2^{-n}]$. Note that
$$
\int_{I}\mu(E+x)dx = \int\int_{I}\chi_{E+x}(y)dxd\mu(y) = \int\int_{I}\chi_{y-E}(x)dxd\mu(y) =\int{\mathcal L}((y-E)\cap I)d\mu(y).
$$
As $-E\subset[-(i+1)2^{-n},-i2^{-n}]$, we have that $y-E\subset I$ if $y\in[2^{-n},1-2^{-n}]$.
$$
\int_{I}\mu(E+x)dx\geq\int_{2^{-n}}^{1-2^{-n}}{\mathcal L}(E)d\mu(y) ={\mathcal L}(E)\mu([2^{-n},1-2^{-n}])>0.
$$
Here, $\mu([2^{-n},1-2^{-n}])>0$ because $\mu$ is supported on $[0,1]$. Hence, there exists $a$ such that $\mu(E+a)>0$. To complete the proof, we note that $\mu(E)=0$ but $T_a(\mu|_{F+a})(E) = \mu(E+a\cap F+a) = \mu(E+a)>0$, this shows the singularity of the measures.
\end{proof}

There are many measures that satisfy the condition in Example \ref{example3.1}. In the case of self-similar measures, one of the most common examples are the  Bernoulli convolutions with overlap and with contraction ratio equal to a Pisot number \cite{PSS}.

\medskip

To the best of our knowledge, measures that have a frame measure should distribute mass on the support in a uniform way. It is natural to expect that assumption in Theorem \ref{th1.2} should be necessary for the existence of frame measures. In particular, we say that a finite Borel measure $\mu$ is {\it translationally absolutely continuous} if for all Borel sets $F$ in the support of $\mu$ and $\mu(F)>0$ and for all $a\in{\Bbb R}^d$, $T_a\mu|_{F+a}\ll\mu$.

Another concept that describes, for a given measure $\mu$, the differences in its local distribution is the {\it local dimension} at points $x\in\mbox{supp}(\mu)$. Let $\alpha>0$
$$
K(\alpha): = \{x\in\mbox{supp}\mu: \mbox{dim}_{loc}\mu(x):= \lim_{r\rightarrow 0}\frac{\log \mu(B_r(x))}{\log r} \ \mbox{exists and equals} \ \alpha\}
$$
where $B_{r}(x)$ is the ball of radius $r$ centered at $x$. If $x\in K(\alpha)$, it means that for all $\epsilon>0$, we have for all $r$ sufficiently small,
 $$
r^{\alpha-\epsilon}\leq\mu(B_r(x))\leq r^{\alpha+\epsilon}.
 $$
The standard  $1/n$-Cantor measure $\mu_n$ has only one local dimension $\log2/\log n$. If $\mu_n$ is convolved with a discrete measure of finite number of atoms, it still has only one local dimension. On the other hand,  it is known that equal contraction non-overlapping self-similar measures with unequal probability weights have more than one local dimension. We do not know examples of  measures that have more than one local dimension and that have a frame measure. Heuristically, if there are two local dimensions, the balls around two points scale differently which means the mass around those balls is not evenly distributed. Combining these observations, we propose the following conjecture:

\medskip

\begin{conjecture}
If $\mu$ is a measure with a frame measure, then $\mu$ must be translationally absolutely continuous and it has only one local dimension.
\end{conjecture}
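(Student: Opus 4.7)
The plan is to tackle the two halves of the conjecture separately, since they address different manifestations of non-uniformity in $\mu$.

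For the translational absolute continuity half, I would argue by contradiction: assume there is a Borel set $F\subset \mathrm{supp}(\mu)$ with $\mu(F)>0$, a vector $a\in\br^d$, and a subset $E\subset F$ with $\mu(E)=0$ but $\mu(E+a)>0$. The heuristic driving Theorem \ref{th1.2} is that the modulation $\widehat{f\,d\mu}\mapsto e_{-a}\widehat{f\,d\mu}$ corresponds, on the measure side, to a translation of $\mu$, and the two-sided frame inequality transports $L^2$-norms between the pre- and post-translation versions of $f$. The proposed strategy is therefore to take $f_n\in L^2(\mu)$ supported on $E+a$ with $\|f_n\|_{L^2(\mu)}=1$ (possible since $\mu(E+a)>0$), and to produce companion functions $g_n\in L^2(\mu)$ for which $|\widehat{g_n\,d\mu}|$ agrees with $|\widehat{f_n\,d\mu}|$ up to a modulation, while $\|g_n\|_{L^2(\mu)}\to 0$ (reflecting $\mu(E)=0$). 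Such a pair would violate the lower frame bound. The main obstacle is constructing $g_n$: in the absolutely continuous situation of Theorem \ref{th1.2} one simply picks $g_n=f_n(\cdot+a)$, but when the two translates of $\mu$ are mutually singular, that formula produces no $L^2(\mu)$ function at all. A natural workaround is to approximate $E$ by thickenings $E_\epsilon$ (e.g.\ by an $\epsilon$-neighborhood), use $g_{n,\epsilon}=f_n(\cdot+a)\chi_{E_\epsilon}$, and control $\|\widehat{f_n\,d\mu}-e_a\widehat{g_{n,\epsilon}\,d\mu}\|_{L^2(\nu)}$ as $\epsilon\downarrow 0$; this is the step where I expect most of the technical work to live.

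For the single local dimension half, the plan is to localize onto the multifractal pieces $K(\alpha)$. Restriction preserves frame bounds (extending $f\in L^2(\mu|_G)$ by zero gives the same Fourier transform and the same $L^2$-norm), so every $\mu|_{K(\alpha)}$ with positive mass inherits the frame measure $\nu$. If two distinct values $\alpha\neq\beta$ both carry positive $\mu$-mass, I would probe each with scale-adapted test functions $f_{x,r}=\chi_{B_r(x)}/\sqrt{\mu(B_r(x))}$ for $x\in K(\alpha)$ and $x\in K(\beta)$, and compare the resulting values $\|\widehat{f_{x,r}\,d\mu}\|_{L^2(\nu)}^2$ through the frame inequality as $r\downarrow 0$. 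The goal is a Beurling-type density estimate relating the growth of $\nu$ on balls of radius $\sim 1/r$ to the local scaling $r^\alpha$ of $\mu$, which would extend the counting argument underlying \cite{Lai11} to the singular setting; if such an estimate holds uniformly in $x\in K(\alpha)$, then two incompatible $\alpha$'s force two incompatible densities for a single $\nu$, delivering the contradiction. The principal obstacle is that $\widehat{\chi_{B_r(x)}\,d\mu}$ does not decay in a way cleanly tied to the local dimension at $x$, because mass from points of different dimension interferes, so the density extraction has to be performed through a Fourier-side localization/smoothing that avoids mixing scales.

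Both halves reduce to the same fundamental difficulty: the frame inequality is a global $L^2$-statement, whereas translational absolute continuity and constancy of the local dimension are pointwise geometric features of $\mu$. Any proof must bridge the two worlds, and my expectation is that the soft Radon-Nikodym device of Section 2 will not suffice; one will need quantitative Beurling-density machinery of the type that underpinned Lai's original proof. Between the two parts, I expect the local-dimension half to be the harder one, since even producing a natural candidate for the Beurling density of $\nu$ at a multifractal point is delicate, while the translational absolute continuity half seems more amenable to a direct approximation-and-limit argument along the lines sketched above.
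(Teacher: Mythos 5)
The statement you are addressing is presented in the paper as a \emph{conjecture}: the authors explicitly write that they were not able to show that measures with frame measures are translationally absolutely continuous, and they offer no argument for the single-local-dimension claim either. There is therefore no proof in the paper to compare against, and your proposal --- which is a strategy outline, not a proof --- does not close the gap; the obstacles you flag yourself are exactly the points where the paper's methods break down.

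Concretely: for the translational absolute continuity half, the identity $|({f(\cdot-a)\,d\mu})^{\widehat{}}|=|\widehat{fh\,d\mu}|$ that powers Theorem \ref{th1.2} exists only because the Radon--Nikodym derivative $h=dT_a(\mu|_{F+a})/d\mu$ exists. In the singular case, $({f(\cdot-a)\,d\mu})^{\widehat{}}$ is the Fourier transform of a measure mutually singular with every measure of the form $g\,d\mu$, so the frame inequality says nothing about it. Your thickening $g_{n,\epsilon}=f_n(\cdot+a)\chi_{E_\epsilon}$ does not repair this: since $\mu(E)=0$, the measures $f_n(\cdot-a)\,d\mu$ and $g_{n,\epsilon}\,d\mu$ remain mutually singular for every $\epsilon>0$, and mutual singularity of measures gives no control on $\|\widehat{f_n\,d\mu}-e_a\widehat{g_{n,\epsilon}\,d\mu}\|_{L^2(\nu)}$; there is no mechanism forcing this quantity to vanish as $\epsilon\downarrow 0$. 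Note also that Example \ref{example3.1} shows that \emph{every} singular measure whose support is exactly $[0,1]$ fails translational absolute continuity, so your argument, if it worked, would rule out frame measures for all such measures (including Bernoulli convolutions with Pisot contraction ratio) --- a conclusion far beyond the reach of the paper's techniques, which should make you suspicious that a soft approximation-and-limit argument can succeed. For the local-dimension half, the entire mathematical content would reside in the Beurling-type estimate relating the growth of $\nu$ on balls of radius $\sim 1/r$ to the local scaling $r^\alpha$ of $\mu$; you state this as a goal and correctly observe that $\widehat{\chi_{B_r(x)}\,d\mu}$ does not localize in a way tied to the local dimension at $x$, but you supply no device to overcome it. As written, neither half contains a proof, and the statement remains open.
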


\medskip

In other words, such a measure has only trivial multifractal structure. On the other hand, even if the $\mu$ has only one local dimension, we still need to classify the measures for which there exists a Fourier frame. In particular, the following is a famous problem:
\medskip

\noindent{\bf (Q1):} Does the one-third Cantor measure have a frame measure, Fourier frame or exponential Riesz basis?

\medskip

It is known that the middle third Cantor measure has no orthogonal spectrum. For some recent approaches, in \cite{DHSW10},  necessary conditions  for the existence of frame spectrum are found in terms of the \textit{Beurling dimension}. It is also shown that all fractal measures arising from the iterated function systems with equal contraction ratios admit some Bessel exponential sequences of positive Beurling dimension \cite{DHW11a}. However, there is still no complete answer to the question. The standard one-third Cantor measure is a measure with only one local dimension $\log 2/\log3$, so the method we used in this paper cannot work. While we contend that it is difficult to answer whether the Cantor measure has frame measures or not, we can ask the following simpler questions:

\medskip

\noindent {\bf (Q2)}. Find a singular measure with a Fourier frame but which is not absolutely continuous with respect to a spectral measure nor a convolution of spectral measures with some discrete measures.

\medskip

\noindent {\bf (Q3)} Find a self-similar measure admitting a Fourier frame of the type described in Q2.

\medskip

\noindent {\bf (Q4)} If a measure has a frame measure, does it have a Fourier frame?

\medskip

\section{An application: Gabor orthonormal bases}
In this section, we consider the Gabor system of the form
$$
{\mathcal G}(g,\Lambda,{\mathcal J}) := \{e^{2\pi i \lambda\cdot x}g(x-p): \lambda\in\Lambda, \ p\in{\mathcal J}\}
$$
where $g\in L^2({\mathbb R}^d)$, $\Lambda$, ${\mathcal J}$ are discrete sets in ${\mathbb R}^d$. We say that $\mathcal G(g,\Lambda,\mathcal J)$ is an {\it orthonormal basis} if the functions in the system ${\mathcal G}(g,\Lambda,{\mathcal J})$ is orthonormal and for all $f\in  L^2({\mathbb R}^d)$,
\begin{equation}\label{eq2.1}
\sum_{\lambda\in\Lambda}\sum_{p\in{\mathcal F}}|\int f(x)e^{-2\pi i \lambda\cdot x}\overline{g(x-p)}dx|^2 = \|f\|_2^2.
\end{equation}
 We also observe that if ${\mathcal G}(g,\Lambda,{\mathcal J})$ is a Gabor orthonormal basis of $L^2({\mathbb R}^d)$, then for any $(\lambda_0,p_0)\in{\mathbb R}^{2d}$, ${\mathcal G}(g,\Lambda-\lambda_0,{\mathcal J}-p_0)$ is also a Gabor orthonormal basis of $L^2({\mathbb R}^d)$. Hence, there is no loss of generality to assume $(0,0)\in\Lambda\times{\mathcal J}$.
%

   We recall one proposition due to Jorgensen and Pedersen.

\begin{proposition}\label{thm1.3}
\cite{JP98} Let $\mu$ be a compactly supported probability measure on ${\Bbb R}^d$. Then $\{e_{\lambda}:\lambda\in\Lambda\}$ is an orthonormal basis on $L^2(\mu)$ if and only if
$$
\sum_{\lambda\in\Lambda}|\widehat{\mu}(x+\lambda)|^2\equiv1.
$$
\end{proposition}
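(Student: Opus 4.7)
The plan is to recognize the claimed identity as Parseval's identity specialized to exponential test functions, and then to leverage compact support of $\mu$ to extend from exponentials to all of $L^2(\mu)$. The key computation: for any $t\in\br^d$,
$$\langle e_{-t}, e_\lambda\rangle_{L^2(\mu)}=\int e^{-2\pi i(t+\lambda)\cdot x}\,d\mu(x)=\widehat{\mu}(t+\lambda),\qquad \|e_{-t}\|_{L^2(\mu)}^2=\mu(\br^d)=1.$$
Hence $\sum_{\lambda\in\Lambda}|\widehat{\mu}(t+\lambda)|^2=\sum_{\lambda}|\langle e_{-t},e_\lambda\rangle_{L^2(\mu)}|^2$, and the hypothesis $\sum_\lambda|\widehat\mu(t+\lambda)|^2\equiv 1$ is \emph{exactly} Parseval's identity evaluated on the family $\{e_{-t}:t\in\br^d\}$.

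For the forward direction, if $\{e_\lambda:\lambda\in\Lambda\}$ is an orthonormal basis of $L^2(\mu)$, then Parseval's identity holds for every $f\in L^2(\mu)$, and in particular for $f=e_{-t}$, which yields the identity. For the converse, I would proceed in two steps. \emph{Orthonormality:} specialize the identity at $t=-\lambda_0$ for $\lambda_0\in\Lambda$, giving $\sum_{\lambda\in\Lambda}|\widehat\mu(\lambda-\lambda_0)|^2=1$. The diagonal term $\lambda=\lambda_0$ equals $|\widehat\mu(0)|^2=1$, so all other terms vanish, that is $\widehat\mu(\lambda-\lambda_0)=0$ for $\lambda\ne\lambda_0$, which is precisely $\langle e_\lambda,e_{\lambda_0}\rangle_{L^2(\mu)}=0$. \emph{Completeness:} the hypothesis already provides Parseval's identity on $\{e_{-t}:t\in\br^d\}$; sesquilinear polarization extends it to every finite $\bc$-linear combination of such exponentials.

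It then suffices to upgrade Parseval on this dense subspace to Parseval on all of $L^2(\mu)$, which together with orthonormality gives the orthonormal basis property. The density is where compact support enters: the complex algebra generated by $\{e_{-t}:t\in\br^d\}$ is closed under conjugation ($\overline{e_{-t}}=e_t$), contains the constants, and separates points of $\mathrm{supp}(\mu)$, so by Stone--Weierstrass it is uniformly dense in $C(\mathrm{supp}(\mu))$; since continuous functions are dense in $L^2(\mu)$ and $\mathrm{supp}(\mu)$ is compact, the span of the exponentials is dense in $L^2(\mu)$. The only point requiring real care in this plan is the polarization step, where one must be careful that Parseval on individual exponentials passes to linear combinations; this is standard, and compact support is the only nontrivial ingredient.
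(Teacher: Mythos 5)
Your argument is correct and is essentially the standard Jorgensen--Pedersen argument; the paper itself offers no proof of this proposition, citing \cite{JP98} instead, so there is nothing to compare against beyond that reference. The only step deserving a word more of care is the passage from Parseval on individual exponentials to Parseval on their span and closure: the clean way is to note that once orthonormality is established, Bessel's inequality makes $f\mapsto(\ip{f}{e_\lambda})_\lambda$ a contraction, so the set where equality holds is a closed subspace (the kernel of $(I-T^*T)^{1/2}$), and it contains the exponentials, which are dense by Stone--Weierstrass.
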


%
%

We will now prove the conjecture in \cite{LW} when $g$ is non-negative. Our main theorem is as follows,

\begin{theorem}\label{thm3.1}
Let  $g\in L^2({\mathbb R}^d)$ be non-negative function supported on a bounded set $\Omega$ with positive Lebesgue measure. Let $\Lambda$ and ${\mathcal J}$ be discrete sets. Suppose that ${\mathcal G}(g,\Lambda,{\mathcal J})$ is a Gabor orthonormal basis of $L^2({\mathbb R}^d)$, then

{\rm (i)} ${\mathcal J}$ is a tiling set of $\Omega$.

{\rm (ii)} $|g(x)| = \frac{1}{\sqrt{{\mathcal L}(\Omega)}}\chi_{\Omega}(x)$ a.e. on $\Omega$.

{\rm (iii)} $\{e_{\lambda}:\lambda\in\Lambda\}$ is a spectrum of $L^2(\Omega)$.

\end{theorem}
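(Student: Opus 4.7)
The plan is to exploit the sign assumption $g\ge 0$ to reduce the Gabor orthonormality to a Plancherel identity for the absolutely continuous measure $\mu=g^{2}\,dx$, and then invoke Corollary~\ref{cor1.5} to force $g$ to be a constant multiple of $\chi_\Omega$. Once (ii) is in hand, (i) and (iii) will follow by short arguments using the remaining Gabor orthonormal basis relations.

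First, I will use the Gabor orthogonality with $\lambda=\lambda'$ and $p\ne p'$ to obtain $\int g(x-p)\,g(x-p')\,dx=0$. Because $g\ge 0$ the integrand is non-negative, so $g(x-p)\,g(x-p')=0$ almost everywhere, and the translates $\Omega+p$ and $\Omega+p'$ are essentially disjoint whenever $p\ne p'$ in $\mathcal{J}$. This is the one place where the sign hypothesis is essential: for general complex-valued $g$ the inner product could vanish through cancellation and disjointness of supports would not follow; this is precisely why the theorem is only proved for non-negative windows and the conjecture remains open in general.

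Next, for any $f\in L^{2}(\br^{d})$ supported in $\Omega$, the disjointness above annihilates every cross term $\langle f,e_\lambda g(\cdot-p)\rangle$ with $p\ne 0$, and the Gabor Parseval identity collapses to
\[
\|f\|_{2}^{2}=\sum_{\lambda\in\Lambda}\Bigl|\int f(x)\,e^{-2\pi i\lambda\cdot x}\,g(x)\,dx\Bigr|^{2}.
\]
Writing $f=\psi g$ with $\psi\in L^{2}(\mu)$ and $d\mu=g^{2}\,dx$, this becomes the Plancherel identity $\|\psi\|_{L^{2}(\mu)}^{2}=\sum_{\lambda}|\widehat{\psi\,d\mu}(\lambda)|^{2}$, i.e., the counting measure on $\Lambda$ is a tight frame measure for $\mu$ with bounds $A=B=1$. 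Corollary~\ref{cor1.5} then forces $g^{2}$ to be a constant multiple of a characteristic function, and the normalization $\|g\|_{2}=1$ (which is the $\lambda=0,\,p=0$ case of the Gabor orthonormality) pins down the constant as $c=1/\sqrt{\mathcal{L}(\Omega)}$, proving (ii).

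To finish, for (i) any $f$ supported in $\br^{d}\setminus\bigcup_{p\in\mathcal{J}}(\Omega+p)$ is orthogonal to every element of $\mathcal{G}(g,\Lambda,\mathcal{J})$ and must vanish by completeness, so $\{\Omega+p:p\in\mathcal{J}\}$ covers $\br^{d}$ up to a null set; combined with the essential disjointness, $\mathcal{J}$ is a tiling set of $\Omega$. For (iii), substituting $g=c\chi_\Omega$ with $c^{2}=1/\mathcal{L}(\Omega)$ into the Plancherel identity of the previous paragraph gives
\[
\sum_{\lambda\in\Lambda}\bigl|\langle\psi,e_\lambda\rangle_{L^{2}(\Omega)}\bigr|^{2}=\mathcal{L}(\Omega)\|\psi\|_{L^{2}(\Omega)}^{2}
\]
for every $\psi\in L^{2}(\Omega)$, while the $p=p'=0$ case of Gabor orthogonality yields $\int_\Omega e^{2\pi i(\lambda-\lambda')\cdot x}\,dx=\mathcal{L}(\Omega)\,\delta_{\lambda\lambda'}$. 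These two relations together assert that $\{e_\lambda/\sqrt{\mathcal{L}(\Omega)}:\lambda\in\Lambda\}$ is simultaneously Parseval and orthonormal in $L^{2}(\Omega)$, hence an orthonormal basis, which is exactly statement (iii).
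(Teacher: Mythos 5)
Your proof is correct and follows essentially the same route as the paper's: non-negativity gives the essential disjointness of the translates $\Omega+p$, completeness gives the covering, the Gabor--Parseval identity reduced to functions supported in $\Omega$ shows that the counting measure on $\Lambda$ is a Plancherel (tight frame) measure for $g^2\,dx$, and Corollary~\ref{cor1.5} then forces $g$ to be a constant multiple of $\chi_\Omega$. The only cosmetic difference is that the paper tests the Parseval identity on the family $f_t=ge_t$ and invokes the Jorgensen--Pedersen criterion (Proposition~\ref{thm1.3}), whereas you test on all $f=\psi g$ and read off the tight-frame-measure property directly.
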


\medskip

\begin{proof}
We divide the proof into three claims.

\medskip

\textit{Claim 1: If ${\mathcal G}(g,\Lambda,{\mathcal J})$ is complete in $L^2({\mathbb R}^d)$, then ${\mathcal L}({\mathbb R}^d\setminus\bigcup_{p\in{\mathcal J}}(\Omega+p)) =0$.}

\medskip

 Suppose that ${\mathcal L}({\mathbb R}^d\setminus\bigcup_{t\in{\mathcal J}}(\Omega+t)) >0$, let $K\subset {\mathbb R}^d\setminus\bigcup_{t\in{\mathcal J}}(\Omega+t)$,  be such that $0<{\mathcal L}(K)<\infty$. Then $f =\chi_K$, then $f$ is a nonzero $L^2$ function, but
$$
\int f(x)e^{2\pi i \lambda\cdot x}g(x-p)dx =0
$$
since $g(\cdot-p)$ is supported on $\Omega+p$ which is disjoint from $K$. This contradicts the completeness of the system.

\medskip

\textit{Claim 2: If ${\mathcal G}(g,\Lambda,{\mathcal J})$ is a Gabor orthonormal basis in $L^2({\mathbb R}^d)$, then ${\mathcal L}((\Omega+p)\cap(\Omega+p')) =0$, for all $p\neq p'$ and $p,p'\in{\mathcal J}$.}

\medskip


Suppose for some $p\neq p'$, we have ${\mathcal L}(\Omega_{p,p'})>0$ where $\Omega_{p,p'} = (\Omega+p)\cap(\Omega+p')$. By the orthonormality of the functions represented by $(0,p)$ and $(0,p')$, we have
$$
\int_{\Omega_{p,p'}}g(x-p)g(x-p')dx =0.
$$
As $g$ is non-negative, $g(\cdot-p)g(\cdot-p') =0$ almost everywhere on $\Omega_{p,p'}$. But $g(\cdot-p)$ and $g(\cdot-p')$ are supported on $\Omega+p$ and $\Omega+p'$ respectively and they are non-zero almost everywhere there. This is a contradiction since $\Omega_{p,p'}$ has positive Lebesgue measure.
\medskip

\textit{Claim 3: $\{e_{\lambda}:\lambda\in\Lambda\}$ is a spectrum of $L^2(|g|^2dx)$.}

\medskip

For any $t\in{\Bbb R}^d$, we let $f_t(x) = g(x)e^{2\pi i \langle t,x\rangle}$. Then $\int|f_t|^2 = \int|g|^2<\infty$. We use this in (\ref{eq2.1}) and obtain
$$
\sum_{\lambda\in\Lambda}|\int|g(x)|^2e^{2\pi i (t-\lambda)\cdot x}dx|^2+\sum_{\lambda\in\Lambda}\sum_{p\in{\mathcal J}\setminus\{0\}}|\int g(x)g(x-p)e^{2\pi i (t-\lambda)\cdot x}dx|^2 = \int|g(x)|^2dx=1
$$
where $\int|g|^2=1$ follows from the orthonormality and $(0,0)\in\Lambda\times{\mathcal J}$.
As $g(\cdot)g(\cdot-p)$ is non-zero only on $\Omega\cap\Omega+p$ which is of Lebesgue measure 0 by claim 2, we get that  $g(\cdot)g(\cdot-p)=0$ almost everywhere and thus all the integrals in the second sum on the left hand side are zero. Hence,
\begin{equation}\label{eq3.1}
\sum_{\lambda\in\Lambda}|(|g|^2dx)^{\widehat{}}(t-\lambda)|^2 \equiv \int|f|^2d\mu.
\end{equation}
This is equivalent to say $\Lambda$ is a spectrum of $L^2(|g|^2dx)$ by Proposition \ref{thm1.3}.

 \medskip

 We can now complete the proof the theorem. Claim 1 and 2 shows that ${\mathcal J}$ is a tiling set of $\Omega$. This proves (i).  By Corollary \ref{cor1.5} and claim 3, $|g| = c\chi_{\Omega}$. As  $\int|g|^2dx=1$ and we can see easily that $c = ({\mathcal L}(\Omega))^{-1/2}$. Hence (ii) holds. Finally (iii) follows immediately from claim 3.
\end{proof}

\medskip

\noindent{\bf Acknowledgment.} The second author would like to thank Professor Ka-Sing Lau for his teaching and guidance over the years. He would also like to thank Professor De-Jun Feng and Professor Chi-Wai Leung for valuable and inspiring discussions.

\bibliographystyle{alpha}
\bibliography{eframes}

\begin{thebibliography}{DHW11b}

\bibitem[BR07]{BR07}
Christopher Bandt and Hui Rao.
\newblock Topology and separation of self-similar fractals in the plane.
\newblock {\em Nonlinearity}, 20:1463--1474, 2007.

\bibitem[DFW07]{DFW}
Xin-Rong Dai, De-Jun Feng, and Yang Wang.
\newblock Refinable functions with non-integer dilations.
\newblock {\em J. Func. Anal.}, 250:1--20, 2007.

\bibitem[DHJ09]{DHJ09}
Dorin~Ervin Dutkay, Deguang Han, and Palle E.~T. Jorgensen.
\newblock Orthogonal exponentials, translations and bohr completions.
\newblock {\em J.Funct. Anal.}, 257:2999--3019, 2009.

\bibitem[DHS09]{DHS09}
Dorin~Ervin Dutkay, Deguang Han, and Qiyu Sun.
\newblock On the spectra of a {C}antor measure.
\newblock {\em Adv. Math.}, 221(1):251--276, 2009.

\bibitem[DHSW11]{DHSW10}
Dorin~Ervin Dutkay, Deguang Han, Qiyu Sun, and Eric Weber.
\newblock On the {B}eurling dimension of exponential frames.
\newblock {\em Adv. Math.}, 226:285--297, 2011.

\bibitem[DHW11a]{DHW11a}
Dorin~Ervin Dutkay, Deguang Han, and Eric Weber.
\newblock Bessel sequences of exponentials on fractal measures.
\newblock {\em J. Functional Anal.}, 261(9):2529--2539, 2011.

\bibitem[DHW11b]{DHW11b}
Dorin~Ervin Dutkay, Deguang Han, and Eric Weber.
\newblock Continuous and discrete {F}ourier frames for fractal measures.
\newblock {\em preprint}, 2011.

\bibitem[DL08]{DL08}
Qi-Rong Deng and Ka-Sing Lau.
\newblock Open set condition and post-critically finite self-similar sets.
\newblock {\em Nonlinearity}, 21:1227--1232, 2008.

\bibitem[DS52]{DS52a}
R.~Duffin and A.~Schaeffer.
\newblock A class of nonharmonic {F}ourier series.
\newblock {\em Trans. Amer. Math. Soc.}, 72:341--366, 1952.

\bibitem[Gro00]{G00}
Karlheinz Grochenig.
\newblock {\em Foundation of Time-Frequency Analysis}.
\newblock Applied and Numerical Harmonic Analysis. Birkh\"auser Boston Inc.,
  Boston, MA, 2000.

\bibitem[HL08a]{HeL08}
Xing-Gang He and Ka-Sing Lau.
\newblock On a generalized dimension of self-affine fractals.
\newblock {\em Math. Nachr.}, 281:1142--1158, 2008.

\bibitem[HL08b]{HL08}
Tian-You Hu and Ka-Sing Lau.
\newblock Spectral property of the {B}ernoulli convolutions.
\newblock {\em Adv. Math.}, 219(2):554--567, 2008.

\bibitem[HLL11]{HLL11}
Xing-Gang He, Ka-Sing Lau, and Chun-Kit Lai.
\newblock Exponential spectra in {L}$^2(\mu)$.
\newblock {\em preprint}, 2011.

\bibitem[HLW01]{HLW}
Tian-You Hu, Ka-Sing Lau, and Xiang-Yang Wang.
\newblock Absolute continuity of a class of invariant measures.
\newblock {\em Proc. Amer. Math. Soc.}, 130:759--767, 2001.

\bibitem[Hut81]{Hut81}
John~E. Hutchinson.
\newblock Fractals and self-similarity.
\newblock {\em Indiana Univ. Math. J.}, 30(5):713--747, 1981.

\bibitem[IP00]{MR1744572}
Alex Iosevich and Steen Pedersen.
\newblock How large are the spectral gaps?
\newblock {\em Pacific J. Math.}, 192(2):307--314, 2000.

\bibitem[JKS07]{MR2338387}
Palle E.~T. Jorgensen, Keri~A. Kornelson, and Karen~L. Shuman.
\newblock Affine systems: asymptotics at infinity for fractal measures.
\newblock {\em Acta Appl. Math.}, 98(3):181--222, 2007.

\bibitem[JP98]{JP98}
Palle E.~T. Jorgensen and Steen Pedersen.
\newblock Dense analytic subspaces in fractal {$L\sp 2$}-spaces.
\newblock {\em J. Anal. Math.}, 75:185--228, 1998.

\bibitem[Kig01]{Ki}
Jun Kigami.
\newblock {\em Analysis on Fractals}.
\newblock Cambridge Tracts in Mathematics, vol. 143, Cambridge University
  Press, Cambridge, 2001.

\bibitem[Lai11]{Lai11}
Chun-Kit Lai.
\newblock On {F}ourier frame of absolutely continuous measures.
\newblock {\em J.Funct. Anal.}, 261:2877–--2889, 2011.

\bibitem[Lai12]{Lai12}
Chun-Kit Lai.
\newblock Spectral analysis on fractal tiles and measures.
\newblock {\em PhD thesis, CUHK}, 2012.

\bibitem[Li07]{MR2297038}
Jian-Lin Li.
\newblock {$\mu\sb {M,D}$}-orthogonality and compatible pair.
\newblock {\em J. Funct. Anal.}, 244(2):628--638, 2007.

\bibitem[LR03]{LR}
Ka-Sing Lau and Hui Rao.
\newblock On one-dimensional self-similar tilings and the $pq$-tilings.
\newblock {\em Tran. of Amer. Math. Soc.}, 355:1401--1414, 2003.

\bibitem[LW93]{LW93}
Ka-Sing Lau and Jian-rong Wang.
\newblock Mean quadratic variations and {F}ourier asymptotics of self-similar
  measures.
\newblock {\em Monatshefte Math}, 115:99--132, 1993.

\bibitem[LW96a]{LgW1}
J.C. Lagarias and Yang Wang.
\newblock Self-affine tiles in {${\mathbb R}^n$}.
\newblock {\em Adv. in Math.}, 121:21--49, 1996.

\bibitem[LW96b]{LgW2}
J.C. Lagarias and Yang Wang.
\newblock Tiling the line with translates of one tile.
\newblock {\em Invent. Math.}, 124:341--365, 1996.

\bibitem[{\L}W02]{MR1929508}
Izabella {\L}aba and Yang Wang.
\newblock On spectral {C}antor measures.
\newblock {\em J. Funct. Anal.}, 193(2):409--420, 2002.

\bibitem[LW03]{LW}
Youming Liu and Yang Wang.
\newblock The uniformity of non-uniform {G}abor bases.
\newblock {\em Adv. Comput. Math.}, 18(2-4):345--355, 2003.
\newblock Frames.

\bibitem[{\L}W06]{MR2200934}
Izabella {\L}aba and Yang Wang.
\newblock Some properties of spectral measures.
\newblock {\em Appl. Comput. Harmon. Anal.}, 20(1):149--157, 2006.

\bibitem[OCS02]{OSANN}
Joaquim Ortega-Cerd{\`a} and Kristian Seip.
\newblock Fourier frames.
\newblock {\em Ann. of Math. (2)}, 155(3):789--806, 2002.

\bibitem[PSS00]{PSS}
Yuval Peres, Wilhelm Schlag, and Boris Solomyak.
\newblock {\em Sixty years of {B}ernoulli convolutions}.
\newblock Fractals and Stochastics II, (C. Bandt, S. Graf and M. Zaehle, eds),
  Progress in probability, 46, Birhauser, 2000.

\bibitem[Sch94]{S94}
Andreas Schief.
\newblock Separation properties for self-similar sets.
\newblock {\em Proc. Amer. Math. Soc}, 122:111--115, 1994.

\bibitem[Str00]{MR1785282}
Robert~S. Strichartz.
\newblock Mock {F}ourier series and transforms associated with certain {C}antor
  measures.
\newblock {\em J. Anal. Math.}, 81:209--238, 2000.

\bibitem[Str06]{MR2279556}
Robert~S. Strichartz.
\newblock Convergence of mock {F}ourier series.
\newblock {\em J. Anal. Math.}, 99:333--353, 2006.

\bibitem[Yua08]{MR2443273}
Yan-Bo Yuan.
\newblock Analysis of {$\mu\sb {R,D}$}-orthogonality in affine iterated
  function systems.
\newblock {\em Acta Appl. Math.}, 104(2):151--159, 2008.

\end{thebibliography}

\end{document}